\definecolor{LinkColor}{rgb}{0,0,0} 
\newtheorem{maintheorem}{Theorem}
\crefname{maintheorem}{Theorem}{Theorems}
\newtheorem{maincorollary}[maintheorem]{Corollary}
\crefname{maincorollary}{Corollary}{Corollaries}
\newtheorem{theorem}{Theorem}[section]
\crefname{theorem}{Theorem}{Theorems}
\newaliascnt{corollary}{theorem}
\crefname{corollary}{Corollary}{Corollaries}
\newaliascnt{lemma}{theorem}
\newtheorem{lemma}[lemma]{Lemma}
\crefname{lemma}{Lemma}{Lemmata}
\newaliascnt{proposition}{theorem}
\newtheorem{proposition}[proposition]{Proposition}
\crefname{proposition}{Proposition}{Propositions}
\newtheorem*{zc*}{Zassenhaus Conjecture (ZC)}
\newtheorem*{pq*}{Prime Graph Question (PQ)}
\newtheorem*{theorem*}{Theorem}
\theoremstyle{definition}
\newaliascnt{definition}{theorem}
\crefname{definition}{Definition}{Definitions}
\theoremstyle{remark}
\newaliascnt{example}{theorem}
\newtheorem{example}[example]{Example}
\crefname{example}{Example}{Examples}
\newaliascnt{remark}{theorem}
\newtheorem{remark}[remark]{Remark}
\crefname{remark}{Remark}{Remarks}
\newaliascnt{notation}{theorem}
\crefname{notation}{Notation}{Notations}
\newcommand{\Sz}{\operatorname{Sz}}
\newcommand{\PSL}{\operatorname{PSL}}
\newcommand{\PGL}{\operatorname{PGL}}
\newcommand{\PSU}{\operatorname{PSU}}
\newcommand{\PSp}{\operatorname{PSp}}
\newcommand{\Aut}{\operatorname{Aut}}
\newcommand{\Inn}{\operatorname{Inn}}
\newcommand{\Gal}{\operatorname{Gal}}
\newcommand{\Tr}{\operatorname{Tr}}
\newcommand{\FF}{\mathbb{F}}
\newcommand{\V}{\textup{V}}
\newcommand{\ZZ}{\mathbb{Z}}
\newcommand{\QQ}{\mathbb{Q}}
\newcommand{\cc}[1]{\ensuremath{{\texttt{#1}}}}
\newlength{\heightofhw}
\newcommand{\eigbox}[2]{\ensuremath{\textstyle{#1} \times \boxed{\rule{0cm}{\heightofhw} #2}}}
\title[Prime Graph Question for Integral Group Rings of 4-primary groups I]{On the Prime Graph Question for Integral Group Rings\\ of 4-primary groups I}
\author{Andreas B\"achle}
\address{Vakgroep Wiskunde, Vrije Universiteit Brussel, Pleinlaan 2, 1050 Brussels, Belgium}
\email{\href{mailto:abachle@vub.ac.be}{abachle@vub.ac.be}}
\author{Leo Margolis}
\address{Departamento de matem\'aticas, Facultad de matem\'aticas, Universidad de Murcia, 30100 Murcia, Spain}
\email{\href{mailto:leo.margolis@um.es}{leo.margolis@um.es}}
\thanks{The first author is a postdoctoral researcher of the FWO (Research Foundation Flanders). The second author is supported by a Marie Curie grant from EU project 705112-ZC}
\subjclass[2010] {16S34, 16U60, 20C05} 
\keywords{integral group ring, torsion units, projective special linear group, Prime Graph Question, almost simple groups.}
\begin{document}

\begin{abstract} We study the Prime Graph Question for integral group rings. This question can be reduced to almost simple groups by a result of Kimmerle and Konovalov. We prove that the Prime Graph Question has an affirmative answer for all almost simple groups having a socle isomorphic to $\PSL(2, p^f)$ for $f \leq 2$, establishing the Prime Graph Question for all groups where the only non-abelian composition factors are of the aforementioned form.  Using this, we determine exactly how far the so-called HeLP method can take us for (almost simple) groups having an order divisible by at most $4$ different primes.
\end{abstract}

\maketitle

\section{Introduction}

Let $G$ be a finite group. The integral group ring $\ZZ G$ of $G$ comes with a natural augmentation map 
\[ \varepsilon \colon \ZZ G \to \ZZ \colon \sum_{g \in G} z_g g \mapsto \sum_{g \in G} z_g. \] 
This map is a ring homomorphism, hence every unit of $\ZZ G$ is mapped to $1$ or $-1$, so up to a sign every unit of $\ZZ G$ lies in $\V(\ZZ G)$ -- the units of augmentation $1$, also called normalized units.

The connections between the properties of $G$ and $\V(\ZZ G)$ inspired a lot of interesting research, culminating in Weiss' results on $p$-adic group rings of nilpotent groups \cite{Weiss88,Weiss91}, Hertweck's counterexample to the isomorphism problem \cite{HertweckAnnals} and Jespers' and del Río's comprehensive books \cite{EricAngel1,EricAngel2}, among others. Concerning the finite subgroups of $\V(\ZZ G)$ the major open question today is the Zassenhaus Conjecture \cite{Zassenhaus}, also known as the first Zassenhaus Conjecture:

\begin{zc*} For any $u \in \V(\ZZ G)$ of finite order, there exist an element $g \in G$ and a unit $x$ in the rational group algebra $\QQ G$ such that $x^{-1}ux = g$. \end{zc*}

In case such $g$ and $x$ exist for a torsion unit $u$, the elements $g$ and $u$ are said to be rationally conjugate. This conjecture has been proven, e.g.\ for all nilpotent groups \cite{Weiss91}, groups having a normal Sylow subgroup with abelian complement \cite{HertweckColloq} and cyclic-by-abelian groups \cite{CyclicByAbelian}. The conjecture is known for the simple groups $\PSL(2, p)$ for primes $p \leq 23$, see \cite{Gitter} for references. But in general this conjecture remains widely open. As a first step towards the Zassenhaus Conjecture W.~Kimmerle proposed the following question \cite{KimmiPQ}:

\begin{pq*} If $\V(\ZZ G)$ contains a unit of order $pq$, does $G$ have an element of order $pq$, where $p$ and $q$ are different primes? \end{pq*}

This is the same as to ask whether $G$ and $\V(\ZZ G)$ have the same Prime Graph. The \emph{Prime Graph} of a group $X$ is the graph $\Gamma(X)$ whose vertices are the primes appearing as orders of elements of $X$; two different vertices $p$ and $q$ are joined by an edge if there is an element of order $pq$ in $X$. Kimmerle was probably motivated to raise this question as he proved it true for all solvable groups \cite{KimmiPQ} -- a class for which a solution for the Zassenhaus Conjecture seems to be out of reach.

The Prime Graph Question became even more approachable by a reduction result \cite[Proposition 4.1]{KonovalovKimmiStAndrews} being equivalent to the following theorem.

\begin{theorem}[Kimmerle, Konovalov]\label{reduction} If the Prime Graph Question has an affirmative answer for all almost simple images of $G$, then it has an affirmative answer for $G$ itself. \end{theorem}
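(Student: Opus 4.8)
The plan is to argue by a minimal counterexample and to squeeze its structure down until it is forced to be almost simple, at which point it is one of its own (almost simple) images and the hypothesis applies directly. First I would isolate the two inputs that make the mixed order the only case that matters: by the theorem of Cohn and Livingstone the prime-power element orders of $G$ and of $\V(\ZZ G)$ coincide, so the Prime Graph Question is entirely about units of order $pq$ with $p\neq q$; and, as recalled above, Kimmerle settled the Question for all solvable groups, so a minimal counterexample must be non-solvable. Accordingly, suppose the theorem fails and let $G$ be of minimal order among groups all of whose almost simple images satisfy the Question but for which there are distinct primes $p,q$ and a unit $u\in\V(\ZZ G)$ of order $pq$ while $G$ has no element of order $pq$. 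Taking powers of $u$ and applying Cohn--Livingstone shows $G$ has elements of orders $p$ and $q$. The decisive inductive remark is that for $1\neq N\trianglelefteq G$ every almost simple image of $G/N$ is an almost simple image of $G$; hence, by minimality, \emph{each proper quotient $G/N$ satisfies the Prime Graph Question}.

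The second step removes normal subgroups of order coprime to $pq$. Here I would use the standard fact that the projection $\pi_N\colon\ZZ G\to\ZZ(G/N)$ carries $u$ to a torsion unit whose order divides $pq$, and that this order is \emph{exactly} $pq$ whenever $\gcd(|N|,pq)=1$ (a torsion unit lying in the kernel of $\pi_N$ is rationally conjugate to an element of $N$, so its order must divide $|N|$ and a unit of coprime order cannot collapse). Combined with Schur--Zassenhaus this is decisive: if $1\neq N\trianglelefteq G$ with $\gcd(|N|,pq)=1$, then $\pi_N(u)$ has order $pq$, so $G/N$ has an element of order $pq$; its preimage is an extension of $N$ by a cyclic group of coprime order $pq$, which therefore splits and yields an element of order $pq$ in $G$ --- a contradiction. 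Thus $O_{\{p,q\}'}(G)=1$, and in particular every minimal normal subgroup of $G$ has order divisible by $p$ or by $q$.

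Finally I would analyze the socle, reducing first to a monolithic $G$ with a unique minimal normal subgroup $M$ (so that every nontrivial normal subgroup contains $M$, and the only candidate almost simple images are $G$ itself and the sections attached to $M$). If $M$ is abelian it is elementary abelian for a prime $r\in\{p,q\}$ and $G$ is of affine type; if $\pi_M(u)$ still has order $pq$ then the inductive hypothesis yields an order-$pq$ element of $G/M$, which lies in a complement inside $G$ when the extension splits, contradicting the assumption, while the remaining configurations (the order dropping modulo $M$, or a non-split extension) require finer constraints. If $M=S^k$ is non-abelian then $C_G(M)=1$ and $G$ embeds in $\Aut(S)\wr\mathfrak{S}_k$; when $k=1$ the group $G$ is itself almost simple, hence one of its own images, so the hypothesis applied to $G$ contradicts the standing assumption. \textbf{The case $k>1$ is the main obstacle}: the almost simple group with socle $S$ attached to a single factor is a \emph{section} $N_G(S_1)/C_G(S_1)$ rather than a quotient, and there is no natural ring homomorphism $\ZZ G\to\ZZ\bigl(N_G(S_1)/C_G(S_1)\bigr)$ along which to push $u$. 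The genuine content of the proof is therefore this bookkeeping on the order of $u$ under projections whose kernels involve $p$ or $q$: unlike the coprime case, an order-$pq$ element downstairs need not lift and the order of $u$ may a priori drop, so one must show that a monolithic minimal counterexample is already almost simple, after which the hypothesis closes the argument.
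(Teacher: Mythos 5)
First, a point of reference: the paper itself contains \emph{no} proof of this theorem --- it is quoted from Kimmerle--Konovalov \cite[Theorem~2.1]{KonovalovKimmi} --- so your attempt can only be measured against that cited argument, not against anything in the text. Your opening moves are sound and coincide with the standard beginning of such a reduction: by Cohn--Livingstone only units of order $pq$ matter; a minimal counterexample $G$ has the property that every proper quotient satisfies (PQ), since almost simple images of $G/N$ are almost simple images of $G$; and a nontrivial normal subgroup $N$ with $\gcd(|N|,pq)=1$ is impossible, because the image of $u$ in $\V(\ZZ (G/N))$ still has order $pq$ and Schur--Zassenhaus lifts an element of order $pq$ of $G/N$ back into $G$. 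One repair is needed even here: your justification that a torsion unit in the kernel of $\ZZ G\to\ZZ (G/N)$ is ``rationally conjugate to an element of $N$'' is not a known fact for general normal $N$. What is true, and is enough, is that every prime $r$ dividing the order of a nontrivial torsion unit $v$ in this kernel divides $|N|$: otherwise replace $v$ by a power of prime order $r\nmid |N|$; by Berman--Higman and Hertweck's vanishing theorem \cite[Theorem 2.3]{HertweckBrauer} the partial augmentations of $v$ are supported on classes of elements of order $r$, all disjoint from $N$, while $v\mapsto 1$ forces $\sum_{C\subseteq N}\varepsilon_C(v)=1$, a contradiction.

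After this, however, the proposal stops being a proof, and you say so yourself. (i) The reduction to monolithic $G$ is asserted, not proved: if $M_1\neq M_2$ are minimal normal subgroups, then $G$ embeds in $G/M_1\times G/M_2$, but $\ZZ G$ does \emph{not} embed in $\ZZ (G/M_1)\times\ZZ (G/M_2)$, so nothing guarantees that $u$ survives with order $pq$ in either quotient. (ii) The configuration you wave at as needing ``finer constraints'' --- the order of $u$ dropping modulo $M$ --- is not a side case; it is the heart of the theorem. If $M$ is a normal $p$-subgroup and the image of $u$ in $\ZZ (G/M)$ has order exactly $q$, then $u^q$ is a nontrivial unit of order $p$ in the kernel, the inductive hypothesis on $G/M$ gives nothing (there is no unit of order $pq$ downstairs to feed it), and none of the tools you have set up apply; this case requires input of Weiss--Hertweck type on torsion units in kernels of reduction modulo normal $p$-subgroups, which your outline never invokes, and it is where the substance of the cited proof lies. (iii) By contrast, the case $\operatorname{soc}(G)=S^k$, $k\geq 2$, which you single out as ``the main obstacle'', largely dissolves by an elementary observation you missed: distinct simple factors commute, so if both $p$ and $q$ divide $|S|$ the socle already contains an element of order $pq$ (an order-$p$ element of $S_1$ times an order-$q$ element of $S_2$) and there is nothing to prove; if instead $q\nmid|S|$, the one-prime version of the kernel lemma shows the image of $u$ in $G/\operatorname{soc}(G)$ has order divisible by $q$, which either puts you in the inductive case (order $pq$, then lift with Schur--Zassenhaus and coprime action) or again in an order-drop case as in (ii). So the $k>1$ difficulty is not the absence of a homomorphism onto $N_G(S_1)/C_G(S_1)$; it is the same order-drop problem, and that problem --- the one step your proposal leaves entirely open --- is precisely what makes the Kimmerle--Konovalov theorem a theorem rather than bookkeeping.
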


A group $G$ is called \emph{almost simple} if it is ``sandwiched'' between a non-abelian simple group $S$ and its automorphism group, i.e.\ $S \simeq \Inn(S) \leq G \leq \Aut(S)$. In this case $S$ is the socle of $G$.

The Prime Graph Question has been confirmed for all Frobenius groups \cite{KimmiPQ} and for all $\PSL(2, p)$, $p$ a prime \cite{HertweckBrauer}. Furthermore, it is known to hold for almost simple groups with socle $S$ isomorphic to one of the 13 smaller sporadic simple groups (by work of Bovdi, Konovalov and their collaborators, cf.\ \cite{KonovalovKimmiStAndrews} and the references therein) or to an alternating group of degree at most $17$ \cite{Symmetric}. \cref{TheoremPSL2p2} gives together with \cref{reduction} for the first time a positive answer to the Prime Graph Question for finite groups with non-abelian composition factors from infinite series of simple groups.
\begin{maintheorem}\label{TheoremPSL2p2}
The Prime Graph Question has an affirmative answer for all almost simple groups with socle $\PSL(2,p^f)$, if $f \leq 2$.
\end{maintheorem}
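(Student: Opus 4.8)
The plan is to treat each almost simple group $G$ with $\Inn(S)\le G\le\Aut(S)$ and $S=\PSL(2,p^f)$, $f\le 2$, on its own, using that $\Aut(S)$ is fully understood. For $p$ odd and $f=1$ one has $G\in\{\PSL(2,p),\PGL(2,p)\}$, the first being settled in \cite{HertweckBrauer}; for $f=2$ the outer automorphism group is $C_2\times C_2$, so $G$ is the socle, one of the three index-two overgroups (among them $\PGL(2,p^2)$ and $\mathrm{P\Sigma L}(2,p^2)$, generated by the field automorphism), or the full group $\mathrm{P\Gamma L}(2,p^2)$. The finitely many small primes for which $S$ degenerates ($\PSL(2,2),\PSL(2,3)$ solvable; $\PSL(2,4)\cong\PSL(2,5)\cong A_5$ and $\PSL(2,9)\cong A_6$) are disposed of by \cite{KimmiPQ,Symmetric}. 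For each remaining $G$ I first read off the prime graph from the element orders: in $\PSL(2,q)$ and its extensions these are controlled by the three kinds of maximal cyclic subgroups attached to the unipotent elements (order $p$), to the split torus (order dividing $q-1$), and to the non-split torus (order dividing $q+1$), so that $p$ is an isolated vertex while the primes dividing $q-1$ and those dividing $q+1$ form two disjoint cliques. The non-edges to be killed are therefore of two shapes, $\{p,r\}$ with $p$ the defining characteristic and $\{r,s\}$ with $r\mid q-1$ and $s\mid q+1$; passing to the extensions modifies the available element orders (the field automorphism contributes orders coming from embedded copies of $\PSL(2,p)$ and $\PGL(2,p)$) and hence the list of non-edges.

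Fix a non-edge $\{r,s\}$ and suppose, for contradiction, that $u\in\V(\ZZ G)$ has order $rs$. Since $G/S$ is abelian, the image of $u$ in $\V(\ZZ (G/S))=G/S$ is a group element, which already confines $u$ to a coset; by the Berman--Higman theorem $\varepsilon_1(u)=0$, and as $G$ has no element of order $rs$ the partial augmentations of $u$ are supported on the classes of order $r$ and of order $s$, with $\sum_C\varepsilon_C(u)=1$. A preliminary analysis of the prime-order powers $u^r$ and $u^s$ by the same method constrains their partial augmentations. The engine is then the HeLP method: for any ordinary or $p$-modular Brauer character $\chi$ of $G$, any primitive $(rs)$-th root of unity $\zeta$ (with $\zeta_d$ its primitive $d$-th power) and any integer $l$, the eigenvalue multiplicities
\[
\mu_l(u,\chi)=\frac{1}{rs}\sum_{d\mid rs}\Tr_{\QQ(\zeta_d)/\QQ}\bigl(\chi(u^{rs/d})\,\zeta_d^{-l}\bigr)\in\ZZ_{\ge 0},
\qquad \chi(u^{k})=\sum_{C}\varepsilon_C(u^{k})\,\chi(g_C),
\]
must be non-negative integers. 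Feeding the generic character tables of $\PSL(2,q)$ and $\PGL(2,q)$ (and of the extensions) into these constraints produces a linear system in the unknown augmentations of $u$, and the goal is to show it has no solution with $\sum_C\varepsilon_C(u)=1$, i.e.\ that every $\varepsilon_C(u)$ is forced to vanish.

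For the non-edges $\{r,s\}$ lying entirely in the semisimple part I would use the principal-series characters of degree $q+1$ and the cuspidal characters of degree $q-1$, whose values on torus elements are short sums of roots of unity; evaluated on $u$ and its powers they separate the split from the non-split torus strongly enough to close the case uniformly in $p$. For the non-edges $\{p,r\}$ involving the characteristic I would switch to the $p$-modular setting, where the Brauer characters of $\PSL(2,q)$ are the symmetric powers $\mathrm{Sym}^i$ of the natural $\mathrm{SL}(2,q)$-module ($0\le i\le p-1$): a $p$-element acts unipotently, so the eigenvalue bookkeeping for $u^p$ (a $p$-regular element of order $r$) against the Steinberg and low-degree symmetric-power characters is exactly what rules out mixed units with $p$, extending Hertweck's argument for $\PSL(2,p)$ to $f=2$ and to the extensions.

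The principal obstacle is uniformity over the infinite family: because one must argue for every prime $p$ at once, the inequalities have to be handled with the generic character values rather than by finite computation, and one must isolate the finitely many $p$ at which the generic pattern degenerates --- in particular the behaviour of the prime $2$ inside the torus orders, and for $q=p^2$ the fact that $q-1=(p-1)(p+1)$ loads the split-torus clique with many primes. The almost simple extensions add a further layer, since they require the character tables of the non-simple groups $S.2$ and $\mathrm{P\Gamma L}(2,p^2)$ together with a precise account of which new element orders the diagonal and field automorphisms create. I expect the genuinely delicate points to be exactly those where the HeLP inequalities are weakest --- typically units of order $2r$ and the borderline small primes --- which should be dispatched by bringing in additional Brauer characters, by lifting to $\mathrm{SL}(2,q)$ or $\mathrm{GL}(2,q)$, or by a handful of explicit computations; charting precisely where this extra input becomes necessary is what also answers ``how far the HeLP method can take us''.
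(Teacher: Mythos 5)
Your overall skeleton does match the paper's: split into the finitely many almost simple groups over the socle ($\PGL(2,p^2)$, $\operatorname{P \Sigma L}(2,p^2)$, the third index-two subgroup $M(p^2)$, and $\operatorname{P \Gamma L}(2,p^2)$), dispose of the degenerate small $q$ by known results, and run HeLP with generic characters uniformly in $p$. But there is a genuine flaw at the heart of your plan for the non-edges $\{p,r\}$ involving the defining characteristic: you propose to ``switch to the $p$-modular setting'' and use the symmetric-power Brauer characters of $\PSL(2,q)$. Hertweck's extension of the Luthar--Passi method to Brauer characters applies \emph{only to $p$-regular torsion units}; a character in characteristic $p$ can be evaluated on $u^p$ (order $r$) but not on a unit $u$ of order $pr$, and your own multiplicity formula requires the value $\chi(u)$, which is simply undefined for such $\chi$. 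Constraints on $u^p$ alone can never produce a contradiction, since $u^p$ is an honest unit of order $r$ and is in fact rationally conjugate to a group element (\cref{HertweckPSL}). What actually works --- and what the paper does --- is to stay with \emph{ordinary} characters for these mixed orders: the twist of the Steinberg character for $f=1$ in \cref{PGL2}, and characters induced from the degree-$(p^2+1)$ principal series characters of the socle for $f=2$ in \cref{PSL_2_p2}; in every such case the contradiction comes from combining the resulting eigenvalue multiplicities with the congruences $\tilde{\varepsilon}_p(u) \equiv 0 \bmod p$ and $\tilde{\varepsilon}_p(u) \equiv 1 \bmod r$ of \cref{Wagner}, a tool your proposal never invokes.

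The second gap is that the real substance of the theorem --- the extension groups --- is deferred in your outline to ``additional Brauer characters \dots or a handful of explicit computations,'' which cannot suffice since the family is infinite in $p$ and every argument must be generic. Concretely, the paper needs: a determination of exactly which orders must be excluded in which of the five groups, resting on how classes of the socle fuse under the field automorphism and on the fact that all prime-order elements of $M(p^2)$ already lie in $\PSL(2,p^2)$; characters of the extensions built by induction (a $6$-dimensional $p$-modular character induced from the adjoint representation of $\PGL(2,p^2)$, and induced ordinary characters); the linear character $\tau$ with kernel the socle together with \cref{Lemmaxy} to pin down which involution class the relevant power of $u$ occupies; and, for units of order $2q$ in $\operatorname{P \Sigma L}(2,p^2)$ and $M(p^2)$, a coefficient-sum computation whose final contradiction rests on the arithmetic fact that every divisor of $(p^2+1)/2$ is congruent to $1$ modulo $4$. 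None of these steps is visible in your proposal, and without them (or substitutes of comparable precision) the case analysis for the extensions does not close.
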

The theorem follows from \cref{PGL2,PSL_2_p2}. This, and most of the previously mentioned, results are achieved by extensively using the so-called HeLP method, which is discussed in \cref{HeLP_section}. For a finite group $G$ denote by $\pi(G)$ the set of prime divisors of the order of $G$. A group is called \emph{$k$-primary}, if $|\pi(G)| = k$. After Kimmerle and Konovalov had obtained their reduction result, they investigated in the same article (almost simple) $3$-primary groups. This seemed promising as there are only eight simple groups of this kind by \cite[§ 7]{Feit} and the Classification on Finite Simple Groups. The HeLP method turned out to be not entirely sufficient and the result was completed by the authors using a new method, the so-called lattice method \cite{Gitter}.

 After the Prime Graph Question for $3$-primary groups was settled, it was natural to consider the question for $4$-primary groups -- a much bigger class of groups. Lately, the prime graphs of the almost simple $4$-primary groups were investigated in \cite{KonKh11}. The simple 4-primary groups were first classified in \cite{Shi} and independently in \cite{HuppertLempken}, using the Classification of the Finite Simple Groups. More arithmetic restrictions were obtained in \cite[Theorem 2]{4primaryChina}. There are three, potentially infinite, series and $37$ specific simple $4$-primary groups. These simple groups give rise to $123$ specific and $5$ series of almost simple $4$-primary groups. We sum up the results relevant to us in the following proposition:

\begin{proposition}[Shi; Huppert, Lempken; Bugeaud, Cao, Mignotte] \label{4primaryProposition}
The almost simple 4-primary groups $G$ are the ones listed in \cref{HeLPTable} on page \pageref{HeLPTable}. If $G = \PSL(2,2^f)$, then either $f = 4$ or $2^f-1$ is a Mersenne prime; in this case $\frac{2^f+1}{3}$ is also a prime. If $G = \PSL(2,3^f)$ or $G = \PGL(2,3^f)$, then either $f \in \{4,5\}$ or $\frac{3^f-1}{2}$ and $\frac{3^f+1}{4}$ are both prime.
\end{proposition}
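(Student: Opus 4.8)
The plan is to combine the Classification of the Finite Simple Groups with a number-theoretic count of the prime divisors of group orders, and then to lift the resulting list of simple groups to almost simple groups. First I would run through the families furnished by the CFSG, compute orders and read off prime divisors; for the sporadic and alternating groups and for every Lie-type family other than $\PSL(2,q)$, a growth estimate on the number of prime divisors (obtained via primitive prime divisors of the relevant cyclotomic values) leaves only finitely many $4$-primary members and pins down the $37$ specific simple groups. The genuinely infinite part is the family $\PSL(2,q)$, $q = p^f$, with $|\PSL(2,q)| = \frac{1}{\gcd(2,q-1)}\,q(q-1)(q+1)$, where one must decide for which exponents $f$ the order has exactly four prime divisors; this is the arithmetic heart of the statement.

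For $G = \PSL(2,2^f)$ we have $|G| = 2^f(2^f-1)(2^f+1)$, and since $2^f-1$ and $2^f+1$ are coprime the odd part is $2^{2f}-1 = \prod_{d \mid 2f}\Phi_d(2)$, where $\Phi_d$ is the $d$-th cyclotomic polynomial. By the theorem of Bang and Zsigmondy, $\Phi_d(2)$ has a primitive prime divisor for every $d > 1$ with the sole exception $d = 6$, and primitive prime divisors attached to distinct $d$ are distinct. Hence the number of odd primes dividing $|G|$ is at least the number of divisors $d \mid 2f$ with $d > 1$, minus one if $6 \mid 2f$. Requiring exactly three odd primes therefore forces $2f$ to have very few divisors, which I would push to the dichotomy that either $f = 4$ — where the divisors $2,4,8$ of $2f=8$ contribute $\Phi_2(2)=3$, $\Phi_4(2)=5$, $\Phi_8(2)=17$ — or $f$ is an odd prime $q$; the case $q = 3$ drops out precisely through the $d=6$ exception, as it yields the $3$-primary group $\PSL(2,8)$. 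For an odd prime $q$ the divisors $2,q,2q$ give $\Phi_2(2)=3$, $\Phi_q(2)=2^q-1$ and $\Phi_{2q}(2)=\frac{2^q+1}{3}$, so $4$-primality demands that $2^q-1$ and $\frac{2^q+1}{3}$ each be a power of a single prime.

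The case $G = \PSL(2,3^f)$ or $G = \PGL(2,3^f)$ runs along the same lines with $2$ and $3$ interchanged: both $2$ and $3$ always divide $|G|$, so one needs exactly two further primes inside $3^{2f}-1 = \prod_{d \mid 2f}\Phi_d(3)$, and $\PGL$ is $4$-primary exactly when $\PSL$ is. The new feature is that the prime $2$ now lives inside the cyclotomic factors, so I would determine $v_2(3^f-1)$ and $v_2(3^f+1)$ by the lifting-the-exponent lemma. The different outcome for $f$ even and $f$ odd is exactly what singles out the exceptional value $f = 4$, where $3^4-1 = 2^4\cdot 5$ and $3^4+1 = 2\cdot 41$ supply the two extra primes $5$ and $41$; for $f$ odd the odd parts are $\frac{3^f-1}{2}$ and $\frac{3^f+1}{4}$, which must each be a prime power. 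The value $f = 5$ surfaces as the second exception precisely because $\frac{3^5-1}{2} = 11^2$ is a prime power but not a prime, whereas $\frac{3^5+1}{4} = 61$ is prime.

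The hard part — and the reason three sources must be cited — is the concluding Diophantine step, which upgrades ``prime power'' to ``prime'' and discards the spurious solutions. Concretely one must show that the equations $2^q-1 = r^k$, $2^q+1 = 3r^k$, $\frac{3^f-1}{2} = r^k$ and $\frac{3^f+1}{4} = r^k$ have no solution with $k \geq 2$ apart from the listed exception $f = 5$. Elementary primitive-divisor and valuation arguments do not reach this far; here one invokes the theory of linear forms in logarithms in the form developed by Bugeaud, Cao and Mignotte, which yields the clean assertion that the relevant quotients are genuinely prime. Finally, to pass from the simple socles to the almost simple groups recorded in \cref{HeLPTable}, I would use the known outer automorphism group $\operatorname{Out}(\PSL(2,p^f)) \cong C_{\gcd(2,\,p^f-1)} \times C_f$, generated by the diagonal and field automorphisms, and determine for each admissible socle $S$ which overgroups inside $\Aut(S)$ remain $4$-primary.
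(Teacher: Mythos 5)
This proposition is not proved in the paper at all: it is imported from the literature (the classification of the simple $4$-primary groups from Shi and Huppert--Lempken, the arithmetic refinement from Bugeaud--Cao--Mignotte), so there is no internal proof to compare your attempt against. Your sketch faithfully reconstructs how those cited sources proceed --- CFSG together with Bang--Zsigmondy primitive-divisor counting in $\prod_{d \mid 2f}\Phi_d(p)$ to force $f = 4$ or $f$ an odd prime and to isolate the finitely many remaining simple groups, followed by deep Diophantine results to upgrade ``prime power'' to ``prime'' --- and the checkable arithmetic is correct, including the identification of the exception $f=5$ with the sporadic Nagell--Ljunggren solution $\frac{3^5-1}{2}=11^2$ and of $\PSL(2,8)$ dropping out via the $d=6$ Zsigmondy exception. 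One minor correction: the equation $2^q-1=r^k$ needs no deep input (for $k$ even an odd square is $\equiv 1 \pmod{8}$ while $2^q-1\equiv 7 \pmod{8}$; for $k$ odd the factor $r+1$ of $r^k+1=2^q$ leaves an odd cofactor $>1$), so the machinery of linear forms in logarithms is genuinely needed only for $\frac{2^q+1}{3}$, $\frac{3^q-1}{2}$, and $\frac{3^q+1}{4}$.
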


It is an open number theoretical problem, whether there are actually infinitely many 4-primary groups as noted in the Kourovka Notebook \cite[Problem 13.65]{Kourovka}.

This study of the Prime Graph Question for $4$-primary groups is divided into two parts. In this first part we apply the HeLP method to the almost simple $4$-primary groups and determine exactly for which of them the HeLP method suffices to answer the Prime Graph Question. For a bunch of groups, this is straight forward and can be done by a \textsf{GAP} package developed by the authors for such a purpose \cite{HeLPPackage, HeLPPaper}. In these instances, the data are collected in a compact form in \cref{results}. However, there are cases left requiring a more detailed study for different reasons. Here, the arguments are given in more detail. This analysis is the starting point of part two of this study, where the lattice method will be applied to many of the remaining cases. The results for $4$-primary groups of this part read as follows.

\begin{maintheorem}\label{MainTheorem}
Let $G$ be an almost simple $4$-primary group. The following table shows, whether the HeLP method is sufficient to prove the Prime Graph Question for $G$. A simple group $S$ appearing in bold indicates all almost simple groups having $S$ as a socle. The HeLP method suffices to answer the Prime Graph Question if and only if the isomorphism type of $G$ appears in the left column of the following table. If the HeLP method does not suffice to prove that there are no units of order $pq$ in $\V(\ZZ G)$ for a group $G$ in the right column, then $pq$ is given in italic in parentheses. \\ 
\begin{longtable}{|p{.50\textwidth} | p{.45\textwidth}|} \hline
HeLP method sufficient to prove (PQ) & HeLP method not sufficient to prove (PQ) \\ \hline
$\boldsymbol{\PSL(2,p)}$, $p$ a prime & $\boldsymbol{\PSL(2,2^f)}$, $f \geq 5$ \textit{(6)} \\
{} & $\boldsymbol{\PSL(2,3^f)}$, $f \geq 7$ \textit{(6)} \\ \hline 
$\boldsymbol{A_7}$, $\boldsymbol{A_8}$, $\boldsymbol{A_9}$, $\boldsymbol{A_{10}}$ & {} \\
$\boldsymbol{\PSL(2,25)}$, $\boldsymbol{\PSL(2,49)}$, $\boldsymbol{\PSL(4,3)}$ & $\boldsymbol{\PSL(3,5)}$ \textit{(15)}, $\boldsymbol{\PSL(3,17)}$ \textit{(51)} \\ 
$\boldsymbol{\PSU(3,5)}$, $\boldsymbol{\PSU(3,8)}$, $\boldsymbol{\PSU(3,9)}$ & $\boldsymbol{\PSU(3,7)}$ \textit{(21)} \\
$\boldsymbol{\PSU(4,3)}$, $\boldsymbol{\PSU(4,5)}$, $\boldsymbol{\PSU(4,4)}$, $\boldsymbol{\PSU(5,2)}$ & {} \\
$\boldsymbol{\PSp(4,4)}$, $\boldsymbol{\PSp(4,5)}$, $\boldsymbol{\PSp(4,9)}$, $\boldsymbol{\PSp(6,2)}$ & $\boldsymbol{\PSp(4,7)}$ \textit{(35)} \\
$\boldsymbol{\operatorname{P}\Omega_+(8,2)}$, $\boldsymbol{\Sz(8)}$, $\boldsymbol{G_2(3)}$, $\boldsymbol{{}^3D_4(2)}$  & {} \\
$\boldsymbol{{}^2F_4(2)'}$, $\boldsymbol{M_{11}}$, $\boldsymbol{M_{12}}$, $\boldsymbol{J_2}$ & {} \\

$\PSL(2,16).2$, $\PSL(2,16).4$ & $\PSL(2,16)$ \textit{(6)} \\
$\PSL(2,27).3$, $\PSL(2,27).6$ &  $\PSL(2,27)$ \textit{(6)}, $\PSL(2,27).2$ \textit{(6)} \\
$\PSL(2,81).2a$, $\PSL(2,81).4a$ &  $\PSL(2,81)$ \textit{(6)}, $\PSL(2,81).2b$ \textit{(6, 15)} \\
{} &  $\PSL(2,81).2c$ \textit{(6, 15)}, $\PSL(2,81).2^2$ \textit{(15)} \\
{} & $\PSL(2,81).4b$ \textit{(15)}, $\PSL(2,81).(2\times 4)$ \textit{(15)} \\
{} & $\PSL(2,243)$ \textit{(6)}, $\PGL(2,243)$ \textit{(6, 33)} \\
$\PSL(3,4).2a$, $\PSL(3,4).2b$, $\PSL(3,4).2c$ & $\PSL(3,4)$  \textit{(6)} \\
$\PSL(3,4).3$, $\PSL(3,4).6$, $\PSL(3,4).S_3b$ & {} \\
$\PSL(3,4).S_3c$, $\PSL(3,4).D_{12}$ & {} \\
$\PSL(3,7).3$, $\PSL(3,7).S_3$ & $\PSL(3,7)$ \textit{(21)}, $\PSL(3,7).2$ \textit{(21)} \\
$\PSL(3,8).2$, $\PSL(3,8).3$, $\PSL(3,8).6$ & $\PSL(3,8)$ \textit{(6)} \\
$\PSU(3,4).2$, $\PSU(3,4).4$ & $\PSU(3,4)$ \textit{(6)} \\
$\Sz(32).5$ & $\Sz(32)$ \textit{(10)} \\
\hline 
\end{longtable}
\end{maintheorem}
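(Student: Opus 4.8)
The plan is to read the asserted table as a statement about the \emph{non-edges} of the prime graphs. For a fixed almost simple $4$-primary group $G$ from \cref{4primaryProposition} and a pair of distinct primes $p,q\in\pi(G)$ for which $G$ has no element of order $pq$, the HeLP method of \cref{HeLP_section} attaches to a hypothetical $u\in\V(\ZZ G)$ of order $pq$ a vector of partial augmentations and, from the ordinary character table together with the $\ell$-modular Brauer tables of $G$, produces a finite system $S(G;p,q)$ of integral linear equalities and inequalities expressing that every eigenvalue multiplicity of $u$ under every such representation is a non-negative integer. By design HeLP \emph{settles} the pair $(p,q)$ exactly when $S(G;p,q)$ is infeasible, and is \emph{insufficient} exactly when $S(G;p,q)$ admits a solution not realised by any group element. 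Proving the theorem thus amounts to deciding feasibility of $S(G;p,q)$ for every non-edge of every group in the list, organised so that the infeasible pairs populate the left column and the feasible ones the right.

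For the \textbf{sufficient} (left) column, the only infinite entry is $\PSL(2,p)$ with $p$ prime, for which the Prime Graph Question is already available through the HeLP computation of \cite{HertweckBrauer}, and I would simply cite this. Every other left-column entry is a single finite group or a single extension, so for each I would feed the relevant ordinary and modular character tables into the \textsf{GAP} package \cite{HeLPPackage,HeLPPaper} and check that $S(G;p,q)$ is infeasible for all non-edges $(p,q)$; the compact outcome of these runs is what \cref{results} records. A handful of groups singled out in \cref{HeLPTable} will not yield to the bare automated routine and will need hand arguments that combine constraints coming from several primes $\ell$, or that exploit the integrality of a single critical eigenvalue multiplicity; these I would present in full.

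For the \textbf{insufficient} (right) column the logic is reversed: for each listed group and each italicised $pq$ I must \emph{exhibit} an explicit partial-augmentation vector solving $S(G;p,q)$, which shows HeLP cannot exclude the unit, while for that same group verifying that $S(G;p',q')$ is infeasible for every \emph{other} non-edge $(p',q')$, which pins the obstruction to the stated $pq$. For the finitely many specific groups this is again an explicit verification that one can cross-check with the package. The substantial content lies in the two infinite families $\PSL(2,2^f)$ $(f\geq 5)$ and $\PSL(2,3^f)$ $(f\geq 7)$, both flagged at $pq=6$: neither group has an element of order $6$, since the order of any element of $\PSL(2,q)$ is either a power of the defining prime or coprime to it, yet I claim HeLP cannot detect this. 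I would produce a \emph{uniform} family of solutions supported on one involution class and one class of order-$3$ elements, and show that the handful of irreducible characters of $\PSL(2,q)$, all of degree at most $q+1$, together with the Brauer characters in the defining and in the cross characteristic, impose only finitely many eigenvalue conditions whose slack grows with $q$, so that for $q=2^f$ with $f\geq5$ and for $q=3^f$ with $f\geq7$ a non-negative integral solution always exists.

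The hard part will be exactly this uniform treatment of the two infinite families. Unlike the finite entries it cannot be discharged by computation: one must write the eigenvalue-multiplicity formulas for order-$6$ units as explicit functions of $f$ and prove, for all admissible $f$ above the stated threshold, both that a surviving solution exists (insufficiency at $6$) and that no other non-edge survives (so that $6$ is the only gap). Controlling the modular constraints uniformly, in particular the defining-characteristic Brauer table whose values interact with the order-$3$ part, is where the care is needed; and determining the precise thresholds $f\geq5$ and $f\geq7$, below which the smaller groups $\PSL(2,16)$, $\PSL(2,81)$, $\PSL(2,243)$ and their extensions must instead be handled individually, is the delicate endpoint of the analysis.
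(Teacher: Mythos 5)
Your overall architecture --- per-group, per-non-edge feasibility analysis, package computations for the specific groups, uniform character arguments for the infinite series, and explicit surviving solutions to certify insufficiency --- is indeed the paper's strategy. But there is a genuine gap in how you certify the right-hand (insufficiency) column. By \cref{Extended_HeLP_restrictions}, ``HeLP insufficient at $pq$'' means that some non-trivial tuple of partial augmentations survives the constraints coming from \emph{all} ordinary characters and \emph{all} $\ell$-Brauer characters with $\ell \nmid pq$, not merely from those whose tables happen to sit in the \textsf{GAP} library. For several right-column groups the required tables are simply not available, so your plan of ``explicit verification that one can cross-check with the package'' cannot be executed: $\PSL(2,243)$, $\PGL(2,243)$ and $\PSL(3,17)$ have no library tables at all, the $41$-modular tables of some extensions of $\PSL(2,81)$ are missing, and the $2$-modular table of $\PSp(4,7)$ is only known up to an undetermined parameter. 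The paper closes exactly this hole with representation-theoretic input that your proposal does not anticipate: liftability of cross-characteristic Brauer characters of $\PSL(2,q)$ (Burkhardt, quoted in \cref{Gruppentheorie}\eqref{PSL_characters}), the Fong--Swan theorem, the Brauer-tree and decomposition-matrix analysis of \cref{NegativPSL317}, the module-theoretic argument of \cref{PSL281} for the $\PSL(2,81)$ extensions, and the argument in \cref{NegativS47} that the unknown parameter in White's $2$-modular table of $\PSp(4,7)$ does not change the outcome. Without arguments of this kind, exhibiting a solution of the known-table subsystem proves nothing about HeLP as a whole, so the right column would remain unproved.

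Two smaller points. For the infinite families your worry about ``the defining-characteristic Brauer table whose values interact with the order-$3$ part'' is misdirected: for a unit of order $6$ the defining prime ($2$ for $\PSL(2,2^f)$, $3$ for $\PSL(2,3^f)$) divides $6$, so defining-characteristic Brauer characters cannot enter HeLP at all. What must actually be controlled is that every cross-characteristic Brauer character is liftable, and that on the classes $\cc{1a}$, $\cc{2a}$, $\cc{3a}$ every ordinary character is a non-negative integral combination of the trivial character and the two characters of \cref{PSL22f} (resp.\ \cref{PGL23f}, \cref{PSL23f}); this is precisely how \cref{NegativPSL2} pins down the surviving solutions and their exact number, while \cref{Serien} disposes of all other non-edges with the Steinberg character. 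Finally, on the left-hand side, citing Hertweck covers only $\PSL(2,p)$ itself; the bold entry includes $\PGL(2,p)$, which needs the new \cref{PGL2} (and the socles $\PSL(2,25)$, $\PSL(2,49)$ need the generic \cref{PSL_2_p2}), and some left-column groups, such as the extensions of $\PSU(3,8)$, also lack library tables and must be treated with induced characters as in \cref{code_induced_characters}.
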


The proof is given in \cref{results}. Together with the reduction in \cref{reduction} we obtain:
\begin{maincorollary}
Let $G$ be a finite group and assume that all almost simple images of $G$ are $k$-primary with $k$ at most $4$. Assume moreover that if an almost simple image of $G$ appears in the right column of the table in \cref{MainTheorem}, then $G$ contains elements of the orders given in parentheses. Then the Prime Graph Question has an affirmative answer for $G$.
\end{maincorollary}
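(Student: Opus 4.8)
The plan is to argue by contradiction, combining the reduction in \cref{reduction} with \cref{MainTheorem} one prime pair at a time. Suppose the Prime Graph Question fails for $G$. Since every group element yields a unit of the same order, we always have $\Gamma(G) \subseteq \Gamma(\V(\ZZ G))$, so a failure means there is a pair of distinct primes $p,q$ such that $\V(\ZZ G)$ contains a unit of order $pq$ while $G$ has no element of order $pq$. The first step is to push this failure down to an almost simple image. Inspecting the mechanism of \cref{reduction}, the Kimmerle--Konovalov reduction tracks a torsion unit of a fixed order through the quotients of $G$, so it yields an almost simple image $H$ of $G$ for which $\V(\ZZ H)$ contains a unit of order $pq$ but $H$ has no element of order $pq$; that is, (PQ) already fails for $H$ at the specific edge $pq$.

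Next I would classify $H$. By hypothesis every almost simple image of $G$ is $k$-primary with $k \leq 4$, and since the order of a non-abelian simple group is divisible by at least three distinct primes (by Burnside's $p^aq^b$-theorem its socle already needs three primes), $H$ is either $3$-primary or $4$-primary. If $H$ is $3$-primary, then (PQ) holds for $H$ by the solution of the $3$-primary case in \cite{Gitter}, contradicting the failure at $pq$. If $H$ is $4$-primary and its isomorphism type appears in the left column of the table in \cref{MainTheorem}, then the HeLP method establishes (PQ) for $H$ outright, again a contradiction. Hence $H$ must be one of the $4$-primary groups listed in the right column of that table.

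It remains to treat the right-column groups. For such an $H$, \cref{MainTheorem} asserts that the HeLP method rules out units of order $rs$ in $\V(\ZZ H)$ for every edge $rs$ \emph{except} the finitely many orders printed in italics in parentheses; in particular (PQ) holds for $H$ at every non-italic edge. Because (PQ) fails for $H$ precisely at $pq$, the pair $pq$ must be one of the italic orders attached to $H$. But then $H$ is exactly one of the right-column images invoked in the statement, so the standing hypothesis forces $G$ to contain an element of order $pq$, contradicting our choice of $pq$ as an edge of $\Gamma(\V(\ZZ G))$ absent from $\Gamma(G)$. Thus no failing edge can exist, and (PQ) holds for $G$.

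The only genuinely delicate point is the very first step. As stated, \cref{reduction} concerns the Prime Graph Question as a whole, whereas the argument needs the sharper, edge-preserving form: a failure at the prime pair $pq$ descends to an almost simple image failing at the \emph{same} $pq$. I expect this to follow directly from how the reduction propagates a unit of fixed order through the quotients of $G$, but it is the hinge that lets us match the italic orders of \cref{MainTheorem} against the hypothesis imposed on $G$, and so it is the part that most deserves careful verification.
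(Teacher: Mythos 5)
Your proposal is correct and is essentially the paper's own (implicit) argument: the paper obtains the corollary exactly by combining \cref{reduction} with the table of \cref{MainTheorem}, via the same per-edge case analysis you spell out (3-primary images handled by the lattice-method result, left-column images by HeLP, right-column images forced to fail only at italic orders, which the hypothesis on $G$ excludes). The hinge you flag---that the Kimmerle--Konovalov reduction must preserve the specific pair $pq$---is well spotted but harmless: their proof traces a fixed unit of order $pq$ through quotients of $G$, so it establishes precisely this edge-wise strengthening, which is also what the paper tacitly invokes when it says the corollary follows ``together with the reduction''.
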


The class of $4$-primary groups turns out to be a good benchmark for the power of the HeLP method. We encounter several difficulties and limitations when applying it. Firstly the (possibly) infinite series can be handled using generic ordinary and Brauer characters. In many cases we needed to do manual computations, when character tables were not available in the \textsf{GAP} Atlas of Group Representations \cite{AtlasRep} and the \textsf{GAP} character table library \cite{CTblLib}. In these cases we use induced characters, special tables from the literature and other specific arguments. In one situation we encounter a limit of representation theoretical knowledge, namely for the projective symplectic group $\PSp(4,7).$ Here the $2$-Brauer table is only known up to a parameter. We show however that the value of the parameter has no influence on the ability of the HeLP method to answer the Prime Graph Question for this group. 

In the follow-up article the Prime Graph Question will be proved for more of the almost simple groups in question, among others for the series of all $4$-primary $\PSL(2,2^f)$, $\Aut(\PSL(2,81)) = \PSL(2,81).(2\times 4)$, $\PSL(3,4)$ and $\PSU(3,7)$ using the lattice method.

\section{HeLP Method}\label{HeLP_section}

The main notion to study the Zassenhaus Conjecture and related questions are partial augmentations. Let $C$ be a conjugacy class in $G$ and $u = \sum\limits_{g \in G} z_g g \in \ZZ G$. Then 
\[ \varepsilon_C(u) = \sum_{g \in C} z_g \]
is called the partial augmentation of $u$ with respect to $C$. Sometimes we denote it also by $\varepsilon_c(u)$, where $c$ is an element in $C$. Note that for a normalized unit $u$ we have $\sum_{C}\varepsilon_C(u) = 1$, summing over all conjugacy classes $C$ of $G$. The connection between rational conjugacy and partial augmentations is provided by \cite[Theorem 2.5]{MRSW}: A unit $u \in \V(\ZZ G)$ of order $n$ is rationally conjugate to an element of $G$ if and only if $\varepsilon_{C}(u^d) \geq 0$ for all conjugacy classes $C$ in $G$ and all divisors $d$ of $n$. It is known in general that for a torsion unit $u \in \V(\ZZ G)$ we have $\varepsilon_1(u) = 0$, unless $u = 1$, by the Berman-Higman Theorem \cite[Proposition 1.5.1]{EricAngel1}. Moreover if the order of elements in $C$ does not divide the order of $u$, then $\varepsilon_C(u) = 0$ \cite[Theorem 2.3]{HertweckBrauer}. 

A method to study the Zassenhaus Conjecture using ordinary characters of $G$ was introduced by Luthar and Passi in \cite{LutharPassi} and later extended to Brauer characters by Hertweck in \cite{HertweckBrauer}. Subsequently it became known as HeLP (\textbf{He}rtweck\textbf{L}uthar\textbf{P}assi), a name coined by Konovalov. Let $D$ be a $K$-representation of $G$ with character $\chi$ where $K$ is an algebraically closed field of characteristic $p \geq 0$. Then $D$ can be linearly extended to $\ZZ G$ and restricted afterwards to $\V(\ZZ G)$, providing a $K$-representation, also called $D$, of $\V(\ZZ G)$ with the character $\chi$ now defined on all $p$-regular elements of $\V(\ZZ G)$, i.e.\ those torsion units whose order is not divisible by $p$, see \cite[§ 3]{HertweckBrauer}. All torsion units are considered to be $0$-regular. Let $u \in \V(\ZZ G)$ be a torsion unit of order $n$, such that $n$ is not divisible by $p$. Summing over all $p$-regular conjugacy classes $C$ of $G$ we obtain 
\begin{align}\label{CharacterValue}
 \sum_C\varepsilon_C(u) \chi(C) = \chi(u). 
\end{align}
This holds for ordinary characters and also for Brauer characters by \cite[Theorem 3.2]{HertweckBrauer}.

 In the next two paragraphs, let $u \in \V(\ZZ G)$ always be a unit of order $n$.
 
 Denote by $A \sim (\alpha_1,...,\alpha_m)$ the fact that a diagonalizable matrix $A$ has eigenvalues $\alpha_1,...,\alpha_m$ with multiplicities. Now assume one knows, e.g.\ by induction, the eigenvalues of $D(u^d)$ for all divisors $d$ of $n$, apart from $1$. Then one can obtain restrictions on the possible eigenvalues of $D(u)$ in the following way: If $n = q^f$ is a power of the prime $q$ with $D(u^q) \sim (\alpha_1,...,\alpha_m)$ then there exist $\beta_1,...,\beta_m$ such that $\beta_i^q = \alpha_i$ and
\[ D(u) \sim (\beta_1,...,\beta_m). \]
In case $n$ is not a prime power and $q$ and $r$ are different prime divisors of $n$, there are integers $a$ and $b$ such that $aq + br = 1$. Assume that $D(u^{aq}) \sim (\alpha_1,...,\alpha_m)$ and $D(u^{br}) \sim (\beta_1,...,\beta_m)$. Then, since $D(u^{aq})$ and $D(u^{br})$ are simultaneously diagonalizable, there is a permutation $\pi \in S_m$ such that
\[D(u) = D(u^{aq}) D(u^{br}) \sim (\alpha_1\beta_{\pi(1)},...,\alpha_m\beta_{\pi(m)}).\]
Usually the character ring $\ZZ[\chi]$ of $\chi$, i.e. the smallest ring containing the values of $\chi$ on elements of $G$, allows to obtain more information on the eigenvalues of $D(u)$ since $\chi(u)$ has to lie in $\ZZ[\chi]$. Comparing eigenvalues of $D(u)$ obtained in this way with \eqref{CharacterValue} one obtains restrictions on the possible partial augmentations of $u$ and vice versa.

Via discrete Fourier inversion this may be formalized in the following way: Denote by $\zeta$ a fixed primitive complex $n$-th root of unity and by $\xi$ some complex $n$-th root of unity. In case $p > 0$ still $\xi$ can be understood to be a possible eigenvalue of $D(u)$ via a fixed Brauer correspondence of $n$-th roots of unity in $K$ and $\mathbb{C}$, cf. \cite[§ 17]{CR1}. Denote by ${\operatorname{Tr}}_{L/\mathbb{Q}}$ the number theoretical trace of the field extension $L/\mathbb{Q}$. Then the multiplicity $\mu(\xi, u, \chi)$ of $\xi$ as an eigenvalue of $D(u)$, which is a non-negative integer, is
\begin{align}\label{HeLP-restrictions}
\mu(\xi, u, \chi) = \frac{1}{n} \sum_{\substack{d \mid n \\ d \neq 1}} \operatorname{Tr}_{\mathbb{Q}(\zeta^d)/\mathbb{Q}}(\chi(u^d)\xi^{-d}) \ + \frac{1}{n} \sum_{C} \varepsilon_C(u)\operatorname{Tr}_{\mathbb{Q}(\zeta)/\mathbb{Q}}(\chi(C)\xi^{-1}) \in \ZZ_{\geq0}.
\end{align}
Here the second sum runs over all conjugacy classes containing elements of order coprime to $p$, if $p > 0$. Thus, if we know the partial augmentations of $u^d$ for all divisors $d \neq 1$ of $n$, we obtain a system of integral inequalities for the partial augmentations of $u$.

While it is mostly more convenient to calculate with actual eigenvalues of representations, the more formal formulation of the method given in \eqref{HeLP-restrictions} provides an algorithm directly implementable in a computer program which has been done in the HeLP package \cite{HeLPPackage, HeLPPaper} written by the authors. It was programmed for the computer algebra system \textsf{GAP} \cite{GAP} and uses the program \texttt{4ti2} \cite{4ti2} to solve integral inequalities. This package, the character table library of \textsf{GAP} \cite{CTblLib} and the \textsf{GAP} Atlas of Group Representations \cite{AtlasRep} provide the basis of the second part of the current paper. We give the results in a way such that they can easily be reproduced using the package.
On the other hand several results, especially for series of groups, are given in \cref{GeneralResults} with complete proofs. \\

\textit{Notation:} We will use the following notation to indicate that certain eigenvalues occur with greater multiplicity. For a diagonalizable matrix $A$ we write 
\[A \sim \left(\alpha, \beta, \gamma, \eigbox{m}{\delta_1, ..., \delta_k}, \eigbox{n}{\eta_1, ..., \eta_\ell}\right)\]
to indicate that the eigenvalues of $A$ are $\delta_1, ..., \delta_k$ (each with multiplicity $m$), and $\eta_1, ..., \eta_\ell$ (each with multiplicity $n$), $\alpha$, $\beta$ and $\gamma$ (all with multiplicity $1$).

If $C_1,...,C_k$ are all conjugacy classes of elements of some fixed order $n$ in $G$ and $u \in \mathbb{Z}G$, we write
\[\tilde{\varepsilon}_n(u) = \varepsilon_{C_1}(u) + ... + \varepsilon_{C_k}(u).\]

For a conjugacy class $K$ we denote by $K^p$ the conjugacy class containing the $p$-th powers of the elements of $K$. The following Lemma can be found in \cite[Remark 6]{BovdiHertweck}.
\begin{lemma}\label{Wagner}
Let $G$ be a finite group, $u \in \V(\ZZ G)$ a torsion unit and $p$ a prime. Then for every conjugacy class $C$ of $G$
\[ \varepsilon_{C}(u^p) \equiv \sum_{K \, : \, K^p = C} \varepsilon_K(u) \mod p.  \]
Assume $u$ is of order $pq$, where $p$ and $q$ are different primes, and $G$ contains no elements of order $pq$. Then using the Berman-Higman Theorem we obtain 
\[ \tilde{\varepsilon}_p(u) \equiv 0 \mod p \ \ \ \ \ {\text{and}} \ \ \ \ \  \tilde{\varepsilon}_p(u) \equiv 1 \mod q.  \]
\end{lemma}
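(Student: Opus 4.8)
The plan is to prove the general class-wise congruence first, and then read off the two special congruences by feeding in the powers $u^p$ and $u^q$ together with the Berman--Higman Theorem.

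For the first assertion I would pass to the quotient $\ZZ G / ([\ZZ G, \ZZ G] + p\ZZ G)$, where $[\ZZ G, \ZZ G]$ denotes the additive subgroup generated by all differences $ab - ba$. Writing $u = \sum_{g \in G} z_g g$, the heart of the matter is a noncommutative ``freshman's dream'': in any associative ring $R$ one has $\left(\sum_i a_i\right)^p \equiv \sum_i a_i^p$ modulo the subgroup $[R,R] + pR$. To see this I expand the $p$-th power into words of length $p$ in the $a_i$; the diagonal words $a_i^p$ occur once each, while every non-constant word lies in a cyclic orbit of length exactly $p$ (this is where primality of $p$ enters), and since cyclically rotating a word changes it only by an element of $[R,R]$, each such orbit sums to $p$ times a representative and hence vanishes modulo $[R,R] + pR$. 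Applying this in $\ZZ G$ with $a_g = z_g g$ gives $u^p \equiv \sum_g z_g^p\, g^p$ in $\ZZ G / ([\ZZ G, \ZZ G] + p\ZZ G)$.

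Next I would identify $\ZZ G / [\ZZ G, \ZZ G]$ with the free abelian group on the conjugacy classes of $G$, under which the coefficient of a class $C$ is precisely the partial augmentation $\varepsilon_C$. Reading off the coefficient of $C$ in the displayed congruence yields $\varepsilon_C(u^p) \equiv \sum_{g : g^p \in C} z_g^p \pmod p$; regrouping the sum by conjugacy classes $K$ (so that the condition $g^p \in C$ becomes $K^p = C$) and invoking Fermat's little theorem $z_g^p \equiv z_g \pmod p$ turns the right-hand side into $\sum_{K : K^p = C} \varepsilon_K(u)$, which is the claimed congruence.

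For the second part I use that any class $K$ with $\varepsilon_K(u) \neq 0$ has order dividing $\operatorname{ord}(u) = pq$; excluding the identity (whose partial augmentation on $u$ is $0$ by Berman--Higman) and order $pq$ (of which there are none by hypothesis), only classes of order $p$ or $q$ contribute. Taking $C = \{1\}$ in the congruence just proved gives $\varepsilon_1(u^p) \equiv \tilde{\varepsilon}_p(u) \pmod p$, since the classes $K$ with $K^p = \{1\}$ and $\varepsilon_K(u) \neq 0$ are exactly those of order $p$ (the identity class contributes $0$); as $u^p$ has order $q$ it is nontrivial, so $\varepsilon_1(u^p) = 0$ by Berman--Higman and $\tilde{\varepsilon}_p(u) \equiv 0 \pmod p$ follows. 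For the remaining congruence I apply the lemma with $q$ in place of $p$ and sum over all classes $C$ of order $p$: the left-hand side is $\sum_{C}\varepsilon_C(u^q) = \tilde{\varepsilon}_p(u^q) = 1$, because $u^q$ has order $p$ and so, after removing the identity, is supported on order-$p$ classes whose partial augmentations sum to $1$; on the right-hand side each order-$p$ class $K$ has $K^q$ again of order $p$ (while order-$q$ classes map to the identity and drop out), so the double sum collects every order-$p$ class exactly once and equals $\tilde{\varepsilon}_p(u)$, giving $\tilde{\varepsilon}_p(u) \equiv 1 \pmod q$. The bookkeeping with element orders is harmless; I expect the only genuinely delicate point to be the noncommutative freshman's dream, namely checking that non-constant length-$p$ words split into full cyclic orbits of size $p$ and that rotation is trivial modulo $[\ZZ G, \ZZ G]$, as this is exactly where primality of $p$ is needed and where a careless count would break the argument.
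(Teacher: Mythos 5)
Your proof is correct, and there is in fact nothing in the paper to compare it against: the authors do not prove this lemma but simply import it from the literature, citing \cite[Remark~6]{BovdiHertweck}. Your argument is, in substance, the standard proof underlying that citation: the non-commutative ``freshman's dream'' $\left(\sum_i a_i\right)^p \equiv \sum_i a_i^p \bmod \left([R,R]+pR\right)$, proved by splitting the non-constant length-$p$ words into cyclic orbits of size exactly $p$ (this is where primality enters) and noting that rotation is trivial modulo $[R,R]$, combined with the identification of $\ZZ G/[\ZZ G,\ZZ G]$ with the free abelian group on the conjugacy classes via partial augmentations and with Fermat's little theorem; all of these steps check out. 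Your handling of the two special congruences is also sound, with one bookkeeping remark: the congruence $\tilde{\varepsilon}_p(u)\equiv 1 \bmod q$ needs, besides Berman--Higman, the vanishing result $\varepsilon_C(v)=0$ whenever the order of the elements of $C$ does not divide the order of $v$ (you apply it to $v=u^q$ to obtain $\tilde{\varepsilon}_p(u^q)=1$). This is Hertweck's result, \cite[Theorem~2.3]{HertweckBrauer}, which the paper quotes immediately before the lemma, so invoking it is consistent with the paper's setup even though the lemma's wording mentions only Berman--Higman. A marginally shorter route to that congruence, and the way it is usually derived, is to apply the same support argument to $u$ itself (using that $G$ has no elements of order $pq$) to get $\tilde{\varepsilon}_p(u)+\tilde{\varepsilon}_q(u)=1$, and then to run your $C=\{1\}$ computation with the roles of $p$ and $q$ interchanged, giving $\tilde{\varepsilon}_q(u)\equiv 0 \bmod q$ and hence $\tilde{\varepsilon}_p(u)\equiv 1 \bmod q$. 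Your version --- summing the class-wise congruence over the order-$p$ classes and observing that $K \mapsto K^q$ carries these classes onto themselves, the absence of order-$pq$ elements excluding the only other possible preimages --- is equally valid.
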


\begin{remark}\label{Extended_HeLP_restrictions} We provide some explanations about our understanding of the HeLP method.
\begin{enumerate}
\item In view of the criterion for rational conjugacy of torsion units in $\V(\ZZ G)$ to elements of $G$, it is important not only to know the partial augmentations of a torsion unit $u$, but also of its powers $u^d$, where $d$ is a divisor of the order of $u$. For that reason we consider as the \emph{possible partial augmentations for elements of order $n$} the partial augmentations of all $u^d$, for all divisors $d$ of $n$. Since $u^n =1$ always, the partial augmentations of $u^n$ are not included here. Say e.g. a group possesses two conjugacy classes of involutions, $\cc{2a}$ and $\cc{2b}$, and one of elements of order $3$, say $\cc{3a}$, and none of elements of order $6$. Then a typical tuple of possible partial augmentations of a unit $u \in \V(\ZZ G)$ of order $6$ looks like 
\[ (\varepsilon_{\cc{2a}}(u^3), \varepsilon_{\cc{2b}}(u^3), \varepsilon_{\cc{3a}}(u^2), \varepsilon_{\cc{2a}}(u), \varepsilon_{\cc{2b}}(u), \varepsilon_{\cc{3a}}(u)). \]
As above, we always list only those partial augmentations of units which might not be equal to $0$.\\
We call a tuple of possible partial augmentations of $u \in \V(\ZZ G)$ \emph{trivial}, if it coincides with the tuple of partial augmentations of an element $g \in G$. By \cite[Theorem 2.5]{MRSW} this is the case if and only if this tuple consists of non-negative numbers.
\item When we speak of the maximal possible results using the HeLP method, we include into this all results listed so far in this paragraph. This means if $u \in \V(\ZZ G)$ is a torsion unit of order $n \not= 1$, then
\begin{itemize}
 \item $\varepsilon_1(u) = 0$ and $\varepsilon_C(u) = 0$, if a conjugacy class $C$ consists of elements of order not dividing $n$.
 \item $u$ satisfies the HeLP constraints given in \eqref{HeLP-restrictions} for all ordinary characters of $G$ and all $p$-Brauer characters of $G$, for all primes $p$ not dividing $n$.
 \item $u$ satisfies \cref{Wagner}. 
\end{itemize}
\item Let $u \in \V(\ZZ G)$ be a torsion unit of order $n$ and $q$ and $r$ different prime divisors of $n$. Assume that $D$ is a representation having a character which is rational valued on all conjugacy classes of elements of order dividing $n$. By the method explained above we have $D(u) = D(u^{aq}) D(u^{br})$ for some integers $a$ and $b$. For every fixed $k$, the multiplicity of all primitive $k$-th roots of unity as eigenvalues of $D(u^{aq})$ and of $D(u^q)$ have to be the same, as their trace is rational. Hence $D(u^{aq}) \sim D(u^q)$ and we will give in such situations the diagonal form of $D(u^q)$ instead of the diagonal form of $D(u^{aq})$.
\end{enumerate}
\end{remark}

It is in general much harder to compute Brauer tables of a group than ordinary character tables. For this reason it is often of importance to be able to exclude some prime divisors $p$ of the order of $G$ as candidates whose $p$-Brauer tables will provide new information for the HeLP method. If a $p$-Brauer character coincides on all $p$-regular conjugacy classes of $G$ with an ordinary character it is called \textit{liftable}, cf.\ e.g.\ \cite{White2}, and will surely not provide new information. E.g.\ by the Fong-Swan-Rukolaine Theorem \cite[Theorem 22.1]{CR1} any $p$-Brauer character of a $p$-solvable group is liftable and one does not need to include $p$-Brauer tables in the computations in this case.

\section{General Results}\label{GeneralResults}

In this section we prove general results, independent of the special class of $4$-primary groups. These results turn out however to be very useful for this class of groups.
\begin{lemma}\label{Lemmaxy}
Let $G$ be a finite group, $N$ a normal subgroup and $p$ a prime. Assume $N$ contains exactly one conjugacy class of elements of order $p$, the $p$-rank of $G/N$ is 1 and that $G/N$ possesses a representation $D$ over a field of characteristic coprime to $p$ such that a $p$-element of $G/N$ is in the center, but not in the kernel of $D$.  Assume moreover, that any two groups of order $p$ which do not lie in $N$ are conjugate in $G$. Then units of order $p$ in $\V(\mathbb{Z}G)$ are rationally conjugate to elements of $G$.
\end{lemma}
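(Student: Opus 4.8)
The plan is to verify the rational-conjugacy criterion of \cite{MRSW}: a unit $u \in \V(\ZZ G)$ of prime order $p$ is rationally conjugate to an element of $G$ exactly when $\varepsilon_C(u) \geq 0$ for every conjugacy class $C$. By the Berman--Higman Theorem \cite{EricAngel1} and \cite[Theorem 2.3]{HertweckBrauer}, the only classes on which $u$ can have nonzero partial augmentation are those consisting of elements of order $p$, and these augmentations sum to $1$. So the whole problem reduces to establishing $\varepsilon_C(u) \geq 0$ on the order-$p$ classes of $G$.

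First I would locate where the order-$p$ elements sit. Since $G/N$ has $p$-rank $1$, each of its Sylow $p$-subgroups is cyclic or generalized quaternion and hence has a unique subgroup of order $p$; as $\bar z$ is a central $p$-element, $\langle \bar z \rangle$ is this unique subgroup in every Sylow, so all elements of order $p$ in $G/N$ are powers $\bar z^i$, each a singleton class. Writing $\pi \colon G \to G/N$ for the projection, every order-$p$ element of $G$ therefore lies in one of the cosets $\pi^{-1}(\bar z^i)$, and by hypothesis $N$ contributes the single class $C_0$ (the case $i=0$). Moreover $\langle\bar z\rangle$ central in $G/N$ gives $[G, \pi^{-1}\langle\bar z\rangle] \subseteq N$, so $G$-conjugation cannot move an order-$p$ element out of its coset $\pi^{-1}(\bar z^i)$.

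The representation-theoretic input comes next. Since $\bar z$ is central and $D(\bar z) \neq \mathrm{id}$, some irreducible constituent of $D$ sends $\bar z$ to a nontrivial scalar $\zeta\cdot\mathrm{id}$ with $\zeta$ a primitive $p$-th root of unity; replacing $D$ by this constituent and setting $d = \deg D$, the inflation $\tilde\chi = \chi\circ\pi$ is a character coprime to $p$ that equals $d$ on $C_0$ and $d\zeta^i$ on the order-$p$ classes inside $\pi^{-1}(\bar z^i)$. Putting $\delta_i = \sum_{C \subseteq \pi^{-1}(\bar z^i)}\varepsilon_C(u)$ (so $\delta_0 = \varepsilon_{C_0}(u)$ and $\sum_i \delta_i = 1$), \eqref{CharacterValue} gives $\tilde\chi(u) = d\sum_{i=0}^{p-1}\delta_i\zeta^i$, while the eigenvalue decomposition gives $\tilde\chi(u) = \sum_{j=0}^{p-1} m_j\zeta^j$ with $m_j = \mu(\zeta^j, u, \tilde\chi) \in \ZZ_{\geq 0}$ as in \eqref{HeLP-restrictions}. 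Comparing the two expressions in the basis $1, \zeta, \dots, \zeta^{p-2}$ of $\QQ(\zeta)$ and feeding in $\sum_i \delta_i = 1$ and $\sum_j m_j = d$, a short linear computation collapses to the clean relation $m_j = d\,\delta_j$ for all $j$. As the $\delta_j$ are integers and $d\geq 1$, this forces $\delta_j \geq 0$; since they sum to $1$, exactly one equals $1$ and the rest vanish.

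This already shows that the coset-wise sums of partial augmentations are non-negative, and on the coset $N$ the single class $C_0$ turns this into the desired $\varepsilon_{C_0}(u) \geq 0$. \emph{The main obstacle is the passage from these coset-wise sums to the individual partial augmentations}: a priori a coset $\pi^{-1}(\bar z^i)$ could carry several $G$-classes of order-$p$ elements whose augmentations cancel, and an inflated character from $G/N$ cannot distinguish them. To close this gap I would exploit the remaining hypothesis, that $N$ has a single class of elements of order $p$, to control the order-$p$ elements inside each nontrivial coset, using the description of $G$-conjugacy within a fixed coset together with the congruences of \cref{Wagner}, with the aim of ruling out such cancellation. I expect this separation step, rather than the HeLP computation, to be the delicate part of the argument.
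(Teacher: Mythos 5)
Your reduction to non-negativity of the partial augmentations, your localization of the order-$p$ elements of $G$ inside the cosets above the central order-$p$ subgroup of $G/N$, and your HeLP computation with the inflated character are all correct, and they coincide with the first part of the paper's proof. (In fact your coefficient comparison yielding $m_j = d\,\delta_j$ is arguably cleaner than the paper's corresponding step, which argues that $\chi(u)$, being a sum of $d$ roots of unity lying in $d\,\ZZ[\zeta]$, must equal $d\zeta^i$.) The problem is the one you yourself flag: the proposal never carries out the ``separation step'' from the coset-wise sums $\delta_i$ to the individual partial augmentations, so as it stands it is not a proof of the lemma.

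The instructive point of comparison is that the paper has no separation step at all: its proof opens by asserting that the classes of order-$p$ elements of $G$ outside $N$ are exactly $K_1,\dots,K_{p-1}$ with $K_1^j = K_j$ --- that is, exactly one class in each nontrivial coset --- justified only by the phrase ``this is possible by the assumptions on $G$''. Granting that, \eqref{LemmaxySumme} has a single unknown per coset, your $\delta_i$ already \emph{are} the individual partial augmentations, and the computation you did finishes the proof. Your instinct that this assertion needs an argument is sound, but your plan to extract it (or to rule out cancellation within a coset) from the stated hypotheses cannot succeed, because the hypotheses do not imply it. Take $G = D_8 = \langle r,s \mid r^4 = s^2 = 1,\ srs = r^{-1}\rangle$, $N = \langle r\rangle$, $p = 2$: then $N$ contains exactly one class of involutions, $G/N \simeq C_2$ has $2$-rank $1$, and the sign character of $G/N$ satisfies the hypothesis on $D$; yet the coset $Ns$ contains the two classes $\{s, r^2s\}$ and $\{rs, r^3s\}$. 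No character inflated from $G/N$ can separate these (such characters are constant on cosets of $N$), and the congruences of \cref{Wagner} do not help either: the tuple $(\varepsilon_C,\varepsilon_{\{s,r^2s\}},\varepsilon_{\{rs,r^3s\}}) = (0,2,-1)$ passes every constraint available from the hypotheses and is excluded for $D_8$ only by characters of $G$ that do not factor through $G/N$. So the ``one class per nontrivial coset'' property is an additional condition, implicitly read into the assumptions by the paper (and in fact satisfied in all the almost simple groups to which the lemma is applied there), not something derivable from them. Your gap is genuine, and it sits exactly on top of the one sentence in the paper's proof that is itself left unproved.
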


\begin{proof} Let $u \in \V(\mathbb{Z} G)$ be a unit of order $p$. Let $C$ be the conjugacy class of elements of order $p$ contained in $N$. If this is the only class of elements of order $p$ in $G$, then we are done. So assume there are classes $K_1, ..., K_{p-1}$ of elements of order $p$ in $G \setminus N$ such that $K_1^j = K_j$, this is possible by the assumptions on $G$. We have 
\begin{align}\label{LemmaxySumme}
\varepsilon_C(u) + \varepsilon_{K_1}(u) + ... + \varepsilon_{K_{p-1}}(u) = 1.
\end{align} 
Denote by $\pi \colon G \to G/N$ the natural projection and set $D' = D \circ \pi$, a representation of $G$ of degree $d$, say. Let $\chi$ be the character afforded by $D'$. Let $\chi(K_1) = d\zeta$ for some primitive $p$-th root of unity $\zeta$. We have
\begin{align*} 
\chi(u) & = d\varepsilon_{C}(u) + d\zeta\varepsilon_{K_1}(u) + ... + d\zeta^{p-1}\varepsilon_{K_{p-1}}(u) \\ & = d(\zeta(\varepsilon_{K_1}(u) - \varepsilon_{C}(u)) + ... + \zeta^{p-1}(\varepsilon_{K_{p-1}}(u) - \varepsilon_{C}(u))). 
\end{align*}
Hence $\chi(u)/d  \in \mathbb{Z}[\zeta]$. On the other hand, $\chi(u)$ is the sum of $d$ roots of unity and hence $\chi(u) = d\zeta^{i}$ for some $i$. Thus
\[ \zeta^i = \zeta( \varepsilon_{K_1}(u) - \varepsilon_C(u)) + ... + \zeta^{p-1} (\varepsilon_{K_{p-1}}(u) - \varepsilon_C(u)). \]
Considering the basis $\zeta,...,\zeta^{p-1}$ of $\mathbb{Z}[\zeta]$ and using \eqref{LemmaxySumme} we obtain that exactly one of the partial augmentations $\varepsilon_C(u)$, {$\varepsilon_{K_1}(u),...,\varepsilon_{K_{p-1}}(u)$} must be 1 while all others are $0$.
\end{proof}

We will apply this lemma frequently for representations of degree $1$ and refer to it in \cref{HeLPTable} along with the character inflated to $G$.

To establish the results for the almost simple groups containing $\PSL(2, p^f)$ we will need some well-known properties of these groups and their representation theory which we collect for the convenience of the reader in the following remark.

\begin{remark}\label{Gruppentheorie}
Let $G = \PGL(2,q)$ and $H = \PSL(2,q)$, where $q = p^f$. Set $d = \gcd(2,p-1)$. Then $|G| = (q - 1)q(q + 1)$ and $|H| = |G|/d$. 
\begin{enumerate} \item\label{PSL_orders} The orders of elements of $G$ are exactly the divisors of $ q - 1, q + 1$ and $p$, while the orders of elements in $H$ are exactly the divisors of $(q - 1)/d, (q + 1)/d$ and $p$. The group $H$ possesses a partition into cyclic subgroups, so different cyclic subgroups of the same order have trivial intersection. There are $d$ conjugacy classes of elements of order $p$ in $H$ and these classes fuse in $G$. If $g \in H$ is of order coprime to $p$ then $g^{-1}$ is the only conjugate of $g$ in $\langle g \rangle$ and the conjugacy class of $g$ is the same in $G$ \cite[Chapter XII]{Dickson} (see also \cite[II, §6 - §8]{HuppertI}). 

\item\label{PSL_autos} The outer automorphism group of $H$ is isomorphic to $C_d \times C_f$ while the outer automorphism group of $G$ is isomorphic to $C_f$. The group $C_f$ is induced by the projection of the entry-wise action of the Frobenius automorphism of $\mathbb{F}_{q}$ on the $2\times 2$-matrices over $\mathbb{F}_{q}$ \cite[3.3.4]{Wilson}. Let $\sigma$ be a generator of this group $C_f$. The group $\langle H, \sigma\rangle$ is called $\operatorname{P \Sigma L}(2,q)$. The full automorphism group of $H$ is $\operatorname{P \Gamma L}(2,q)$, the group of semilinearities of the projective line over $\mathbb{F}_{q}$.  

\item\label{PSL_characters} The natural permutation representation of $H$, $G$ and $\operatorname{P \Gamma L}(2,q)$ on the projective line over $\mathbb{F}_q$, which has $q + 1$ elements, is 2-transitive \cite[XI, Example 1.3]{HuppertIII}. Thus modulo the trivial representation we obtain an irreducible ordinary representation of degree $q$ \cite[V, Satz 20.2]{HuppertI}. The character corresponding to this representation is known as the Steinberg character. With the facts given in \cite[XI, Example 1.3]{HuppertIII} the character values may easily be calculated to be:

\[ \psi(x) = \begin{cases} q & x = 1 \\ 1 & o(x) \mid \frac{q - 1}{d} \\ 0 & o(x) \mid p \\ -1 & o(x) \mid \frac{q + 1}{d}. \end{cases}   \]

The ordinary character tables of $G$ and $H$ were first computed in \cite{Schur} and \cite{Jordan}. In particular for every prime $r$ dividing $\frac{q - 1}{d}$ and fixed conjugacy class $C$ of elements of order $r$ in $H$ there exist characters $\chi_i$ of degree $q + 1$ such that $\chi_i(C) = \zeta^i + \zeta^{-i}$ for some fixed primitive $r$-th root of unity $\zeta$ and all possible $i$. Moreover $\chi_i(g) = 1$, if $g$ has order $p$, and $\chi_i(g) = 0$, if the order of $g$ divides $\frac{q + 1}{d}$. 

The general linear group $\operatorname{GL}(2,q)$ acts via conjugation on the Lie Algebra $\mathfrak{sl}_2(\FF_q)$ of $2 \times 2$-matrices over $\FF_q$ with trace $0$. The kernel of this action is exactly the center of $\operatorname{GL}(2,q)$ giving a 3-dimensional $\FF_q$-representation of $G$, say $P$ with character $\varphi$. Let $g$ be an element of $G$ of order $r$ coprime to $p$. Computing the eigenvalues of $P(g)$ we find $\varphi(g) = 1 + \zeta + \zeta^{-1}$ for some primitive $r$-th root of unity $\zeta$.

Let $r$ be an odd prime divisor of $|H|$ different from $p$. By \cite{Burkhardt} any $r$-Brauer character of $H$ is liftable and any $2$-Brauer character of $H$ is liftable if and only if $q \equiv -1 \mod 4$. If $p$ is odd, there are exactly two irreducible ordinary characters of $H$ whose inductions to $G$ are irreducible and the induced character $\eta$ is the same for both. Since both these $H$-characters take different values on the conjugacy classes of elements of order $p$ the reduction of $\eta$ modulo $r$ will be irreducible. This implies that any $r$-modular Brauer character of $G$ is also liftable. 
\end{enumerate}
\end{remark}

A proof of the following results can be found in \cite[Propositions 6.3, 6.4, 6.7]{HertweckBrauer}.

\begin{proposition}[Hertweck]\label{HertweckPSL} Let $G = \PSL(2, p^f)$ and $u \in \V(\ZZ G)$ a torsion unit. Then the following statements hold:
\begin{enumerate}
 \item If $u$ is of prime order different from $p$, then $u$ is rationally conjugate to an element of $G$.
 \item If the order of $u$ is not divisible by $p$, then the order of $u$ coincides with the order of an element of $G$.
 \item If $f = 1$, then (PQ) has an affirmative answer for $G$.
\end{enumerate}
\end{proposition}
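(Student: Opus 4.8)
The plan is to dispatch the three assertions in increasing order of difficulty, using throughout the representation-theoretic data recorded in \cref{Gruppentheorie} together with the HeLP constraints \eqref{HeLP-restrictions} and \cref{Wagner}. Write $q = p^f$ and $d = \gcd(2, p-1)$, and recall from \cref{PSL_orders} that every element of $G$ of order coprime to $p$ lies in a cyclic subgroup of order $(q-1)/d$ or $(q+1)/d$, and that $g$ and $g^{-1}$ are always conjugate. For part (1), let $u$ have prime order $r \neq p$. If $r = 2$ then $q$ is odd and $G$ has a single class of involutions, so $\tilde{\varepsilon}_2(u) = 1$ already forces the tuple of partial augmentations of $u$ to be trivial. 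If $r$ is odd, then $r$ divides exactly one of $(q-1)/d$, $(q+1)/d$, and the $(r-1)/2$ classes of elements of order $r$ are the orbits $\{g^j, g^{-j}\}$ inside a fixed cyclic subgroup $\langle g\rangle$ of order $r$. I would feed the irreducible characters of degree $q \pm 1$ from \cref{PSL_characters}, whose values on these classes are $\zeta^{ij} + \zeta^{-ij}$, into the multiplicity formula \eqref{HeLP-restrictions}. Since each such multiplicity must be a non-negative integer and the partial augmentations sum to $1$, the resulting integral inequalities should leave only the solutions in which a single $\varepsilon_{C_j}(u)$ equals $1$ and the rest vanish; by \cite[Theorem 2.5]{MRSW} this says $u$ is rationally conjugate to an element of $G$.

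For part (2), let $u$ have order $n$ coprime to $p$; I want $n \mid (q-1)/d$ or $n \mid (q+1)/d$, which by \cref{PSL_orders} is precisely the statement that $n$ is an element order. By part (1) every prime divisor of $n$ is an element order, so the only way $n$ can fail to be one is that it mixes the two tori (a prime from $(q-1)/d$ with one from $(q+1)/d$) or exceeds the relevant $r$-part. The device I would use is the $3$-dimensional adjoint representation $P$ of \cref{PSL_characters}, with $\varphi(g) = 1 + \zeta + \zeta^{-1}$ on a $p$-regular element of order equal to that of $\zeta$. For distinct primes $r, s$ dividing $n$, writing $ar + bs = 1$ and using that $P(u^{ar})$ and $P(u^{bs})$ are simultaneously diagonalizable with eigenvalue multisets $\{1, \zeta_s, \zeta_s^{-1}\}$ and $\{1, \zeta_r, \zeta_r^{-1}\}$ determined, via part (1), by the $p$-regular powers $u^r$ and $u^s$, a count of how the three slots can be matched shows that at least one eigenvalue of $P(u)$ must be a primitive $rs$-th root of unity. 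But $\varphi(u) = \sum_C \varepsilon_C(u)\varphi(C)$ is supported on classes of orders $r$ and $s$ only (there being no class of order $rs$), and a short computation with \eqref{HeLP-restrictions} then shows that the multiplicity of such a mixed root cannot be a non-negative integer. This contradiction, applied inductively and combined with the analogous prime-power bookkeeping, forces $n$ to divide one of $(q\pm1)/d$.

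For part (3), assume $f = 1$, so $q = p$. Part (2) already shows that every unit order coprime to $p$ is an element order, hence every edge of the prime graph of $\V(\ZZ G)$ not involving $p$ is present in $\Gamma(G)$; since (PQ) concerns only orders that are products of two primes, it reduces to excluding units of order $pr$ with $r \neq p$, noting that the Borel subgroup $C_p \rtimes C_{(p-1)/d}$ is Frobenius and $G$ therefore has no element of order $pr$. Here Brauer characters detect only the $p$-regular part $u^p$ (of order $r$, controlled by part (1)), so the $p$-part must be pinned down using the ordinary characters, which do evaluate on $u$ directly: I would combine the Steinberg character $\psi$ (vanishing on $p$-elements) and the degree $p \pm 1$ characters with the eigenvalue matching $D(u) = D(u^{ap})D(u^{br})$, using \cref{Wagner} to control $\tilde{\varepsilon}_p(u)$ modulo $p$ and modulo $r$, and read off a violation of the non-negativity in \eqref{HeLP-restrictions}.

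I expect the genuine obstacle to be part (3): the argument must exploit that $q = p$ is prime in an essential way, since for $q = p^f$ with $f$ large the same exclusion fails — this is exactly the order-$6$ phenomenon for $\PSL(2,2^f)$ and $\PSL(2,3^f)$ recorded in \cref{MainTheorem}. Concretely, one must show that the two classes of order-$p$ elements interact with the rigid integral character values of $\PSL(2,p)$ tightly enough to forbid a negative partial augmentation, and the small primes $r = 2, 3$ (where the adjoint eigenvalues collide) will need separate attention in both parts (2) and (3).
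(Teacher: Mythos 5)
Note first that the paper does not prove \cref{HertweckPSL} at all: it cites \cite[Propositions 6.3, 6.4, 6.7]{HertweckBrauer}, and its closest in-house model for the argument is the proof of \cref{PGL2}. Measured against that method, your part (1) has a genuine gap for odd $r$ (your reduction for $r=2$ is fine). You propose to feed the \emph{ordinary} characters of degree $q\pm1$ into \eqref{HeLP-restrictions} and assert that the resulting inequalities leave only the trivial solutions; they provably do not. Take $G=\PSL(2,11)$ and $r=5$: the only irreducible ordinary characters that are not constant on the two classes $\cc{5a}$, $\cc{5b}$ are the two characters $\chi_1,\chi_2$ of degree $12$ (those of degree $5$ and $10$ vanish on these classes, and the Steinberg character equals $1$ there), and using $\varepsilon_{\cc{5a}}(u)+\varepsilon_{\cc{5b}}(u)=1$ one computes $\mu(1,u,\chi_1)=2$, $\mu(\zeta^{\pm1},u,\chi_1)=2+\varepsilon_{\cc{5a}}(u)$, $\mu(\zeta^{\pm2},u,\chi_1)=3-\varepsilon_{\cc{5a}}(u)$, with the roles of the two classes interchanged for $\chi_2$. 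Hence $(\varepsilon_{\cc{5a}}(u),\varepsilon_{\cc{5b}}(u))=(2,-1)$ passes every constraint your characters can produce, and also \cref{Wagner}; moreover all $2$- and $3$-modular Brauer characters of this group are liftable by \cref{Gruppentheorie}\eqref{PSL_characters}, so no non-$p$-modular input can rescue the argument. The missing ingredient --- the entire point of Hertweck's paper --- is the $p$-modular representation of degree $3$ (degree $2$ when $p=2$) from \cref{Gruppentheorie}\eqref{PSL_characters}: for its Brauer character $\varphi$ one finds $\mu(\zeta^k,u,\varphi)=\varepsilon_{C_k}(u)$ for every $k$ coprime to $r$ (where $C_k$ is the class of $g^k$), so non-negativity alone forces exactly one partial augmentation to equal $1$; equivalently, $\varphi(u)=1+\sum_i \varepsilon_{C_i}(u)\left(\zeta^i+\zeta^{-i}\right)$ is a sum of exactly three $r$-th roots of unity and the $\zeta^i+\zeta^{-i}$ are linearly independent. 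Ironically, this is exactly the representation you deploy in part (2), which in turn relies on part (1): the indispensable tool is present in your proposal, just not where it is needed.

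Parts (2) and (3) follow the correct skeleton --- it is Hertweck's, reproduced for $\PGL(2,p^f)$ in the proof of \cref{PGL2} --- but the decisive steps are asserted rather than carried out: ``a count of how the three slots can be matched'', ``a short computation with \eqref{HeLP-restrictions}'', ``read off a violation of the non-negativity''. In \cref{PGL2} these steps are concrete computations: the eigenvalue matching yields $\varphi(u)=1+\xi^k+\xi^{-k}$ with $\xi$ a primitive $rs$-th root of unity, one expands in the basis $\{\xi^j \mid \gcd(j,rs)=1\}$ of $\ZZ[\xi]$, and a coefficient-sum comparison ends in the contradiction $\tilde{\varepsilon}_s(u)=\frac{-s}{r-s}$; likewise the order-$pr$ case in part (3) requires explicit multiplicity formulas combined with \cref{Wagner}, as in the proofs of \cref{PGL2} and \cref{PSL_2_p2}. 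You also omit the prime-power case of part (2) (a unit of order $r^2$ when $G$ has elements of order $r$ but not $r^2$), and the case $p=2$ throughout, where there is no $3$-dimensional adjoint representation and the natural $2$-dimensional module must be used instead. In sum, parts (2) and (3) are incomplete sketches of the right argument, while part (1) as written rests on a method that demonstrably cannot work.
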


With a generalization of Hertweck's proofs of \cite[Propositions 6.3, 6.7]{HertweckBrauer} we obtain the following result for $\PGL(2,p^f)$.

\begin{proposition}\label{PGL2} 
Let $G = \PGL(2,p^f)$. If $u \in \V(\ZZ G)$ is a torsion unit of order coprime to $p$, then $u$ has the same order as an element of $G$. If $f = 1$, the Prime Graph Question has an affirmative answer for $G$ and thus for any almost simple group with socle $\PSL(2,p)$.
\end{proposition}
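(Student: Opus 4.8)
The plan is to follow and extend Hertweck's treatment of $\PSL(2,p^f)$ recorded in \cref{HertweckPSL}, using the explicit character data collected in \cref{Gruppentheorie}. A first useful reduction: if $q=p^f$ is even then every element of $\FF_q^*$ is a square, so $\PGL(2,q)=\PSL(2,q)$ and both assertions follow immediately from \cref{HertweckPSL}. Hence I may assume $q$ odd, where $H:=\PSL(2,q)$ has index $2$ in $G:=\PGL(2,q)$. The tools throughout are the Steinberg character $\psi$ of degree $q$, the principal series characters $\chi_i$ of degree $q+1$, and the discrete series characters of degree $q-1$ from \cref{Gruppentheorie}, whose values separate the split torus (element orders dividing $q-1$) from the non-split torus (orders dividing $q+1$); for $f=1$ the liftability statements in \cref{Gruppentheorie} let me work entirely with ordinary characters.

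For the first assertion let $u\in\V(\ZZ G)$ have order $n$ coprime to $p$; I argue by induction on the number of prime divisors of $n$. In the base case $n$ is a prime $r\neq p$: mimicking Hertweck's proof of \cref{HertweckPSL}(1), I would evaluate \eqref{HeLP-restrictions} for the characters $\chi_i$ (which read off the eigenvalues coming from the split torus) and for the discrete series (for the non-split torus), and use that $\chi(u)$ must be a sum of $r$-th roots of unity to force all partial augmentations of $u$ onto a single conjugacy class of order $r$; this both pins down $r\mid q-1$ or $r\mid q+1$ and yields rational conjugacy, which I reuse for the second assertion. For the inductive step, since $\psi$ takes only the integer values $q,1,-1$ on $p$-regular classes, formula \eqref{CharacterValue} gives $\psi(u^d)\in\ZZ$ for every $d\mid n$, so the eigenvalues of the Steinberg representation on $u$ are Galois stable; combining this with the split/non-split information supplied by the degree $q\pm1$ characters on the proper powers $u^d$ (known by induction) and with the congruences of \cref{Wagner}, I would rule out any $n$ whose prime-power parts straddle both $q-1$ and $q+1$, leaving $n\mid q-1$ or $n\mid q+1$, i.e.\ the order of an element of $G$.

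For the second assertion let $q=p$ be prime. By the first assertion any torsion unit of order prime to $p$ already has the order of an element of $G$, so the prime graph of $\V(\ZZ G)$ can differ from that of $G$ only through a unit of order $pr$ with $r\neq p$; indeed, by \cref{Gruppentheorie} the element orders of $\PGL(2,p)$ are the divisors of $p-1$, $p+1$ and $p$, so $p$ is an isolated vertex and there is no element of order $pr$. To exclude such a unit $u$, note that $u^{p}$ has order $r$ and $u^{r}$ has order $p$; by the prime-order case above and its analogue for order $p$ (again generalising \cref{HertweckPSL}(1)) both are rationally conjugate to elements of $G$, so their partial augmentations, hence the eigenvalues of every $D(u^{d})$ with $d\neq1$, are known. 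Feeding these into \eqref{HeLP-restrictions} for $\psi$ and the degree $q\pm1$ characters, sharpened by \cref{Wagner}, I would obtain a system forcing some multiplicity $\mu(\xi,u,\chi)$ to be negative, a contradiction. This proves (PQ) for $\PGL(2,p)$; since for $p\geq5$ the only almost simple groups with socle $\PSL(2,p)$ are $\PSL(2,p)$ (handled by \cref{HertweckPSL}(3)) and $\PGL(2,p)$, the final clause follows.

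The main obstacle is the combinatorial elimination underlying both induction steps: one must show that the HeLP inequalities admit no solution for a hypothetical straddling unit (first assertion) or order-$pr$ unit (second assertion), uniformly in $q$. The delicate point is that the principal and discrete series restrict to a cyclic torus as a uniform multiple of the regular representation plus a small torus-specific correction, so a single character leaves free parameters in the eigenvalue multiplicities; only the simultaneous use of the whole family $\{\chi_i\}$, the discrete series and $\psi$, together with the mod-$\ell$ congruences of \cref{Wagner}, removes them. The prime $2$ needs extra care, since for $q$ odd $G$ has two classes of involutions (one inside $H$, one outside) distributed between the two tori differently than in $\PSL(2,q)$; tracking this, and the fact that the split torus of $G$ has full order $q-1$ rather than $(q-1)/2$, is precisely what the generalisation of Hertweck's arguments has to accommodate.
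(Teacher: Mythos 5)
Your proposal reproduces the correct skeleton of the argument --- reduce to $q$ odd, treat orders coprime to $p$ first, then for $f=1$ exclude units of order $pr$ using rational conjugacy of $u^p$ and $u^r$ together with HeLP --- and that matches the paper's architecture. But at every decisive point the proof is announced rather than performed: in the base case you ``would evaluate'' \eqref{HeLP-restrictions}, in the inductive step you ``would rule out'' straddling orders, and for order $pr$ you ``would obtain a system forcing some multiplicity to be negative.'' Your closing paragraph concedes that this elimination, uniform in $q$, is the main obstacle; since that elimination \emph{is} the entire content of the proposition, the proposal as it stands proves nothing.

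There is moreover a substantive reason to doubt that your chosen toolkit can close the first assertion. You restrict yourself to ordinary characters (principal series, discrete series, Steinberg), justifying this by the liftability statements in \cref{Gruppentheorie}\eqref{PSL_characters}. Those statements concern $\ell$-modular Brauer characters for $\ell \neq p$ only; they say nothing about $p$-modular characters, which are \emph{not} liftable and carry strictly more information on $p$-regular units. The paper's proof hinges on exactly such a character: the $3$-dimensional $p$-modular representation with Brauer character $\varphi$ of \cref{Gruppentheorie}\eqref{PSL_characters}. Its two crucial features are its tiny degree and the fact that it takes the uniform value $1+\zeta+\zeta^{-1}$ on \emph{every} class of order coprime to $p$, from either torus. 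For $u$ of odd order $rs$ this forces (after rational conjugacy of $u^r$ and $u^s$) $D(u) \sim (1,\xi,\xi^{-1})$ with $\xi$ a primitive $rs$-th root of unity, and a coefficient-sum computation in $\ZZ[\xi]$ yields $\tilde{\varepsilon}_s(u) = \frac{-s}{r-s}$, impossible for odd primes. Ordinary characters cannot reproduce this: the principal series vanish on the non-split torus and the discrete series on the split torus, so each family constrains only half of the partial augmentations; their degrees $q\pm1$ are so large that the inequalities \eqref{HeLP-restrictions} leave ample slack for the nontrivial partial augmentations still permitted by \cref{Wagner}; and the Steinberg character adds only the single integer constraint $\psi(u)=2\tilde{\varepsilon}_r(u)-1$. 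No uniform-in-$q$ contradiction emerges from these data, and you do not exhibit one. Two smaller points: in $\PGL(2,p)$ there is a single class of elements of order $p$, so the ``analogue of \cref{HertweckPSL}(1) for order $p$'' you invoke is unnecessary; and in the order-$pr$ case the paper's contradiction comes from one carefully chosen character --- the twist $\eta$ of the Steinberg character by the sign character, whose integrality and equal values on both classes of involutions are what make the eigenvalue count non-rational --- whereas your sketch never selects a character for which the failure can actually be verified.
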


\begin{proof}
 The Zassenhaus Conjecture for small values of $p^f$ was proved in \cite{HughesPearson} for $\PGL(2,2) \simeq S_3$, in \cite{AllenHobbyS4} for $\PGL(2,3) \simeq S_4$, in \cite{LutharPassi} for $\PGL(2,4) \simeq A_5$ and in \cite{LutharTrama} for $\PGL(2,5) \simeq S_5$. (ZC) for these groups may be also easily verified using HeLP. 

Let first $f = 1$ and let $u \in \V(\ZZ G)$ be of order $pr$ for a prime $r$ different from $p$. Then there are no elements of order $pr$ in $G$.
Let $\eta$ be the twist of the Steinberg character with the non-trivial character of degree 1 and let $D$ be a representation affording $\eta$. Then $\eta$ is integral and takes the same value on both classes of involutions of $G$. First assume $r \mid p - 1$. Then
 \begin{align*}
 D(u^r) &\sim \left(1, \zeta_p,...,\zeta_p^{p-1} \right), \\
 D(u^p) &\sim \left(\eigbox{\frac{p+r-1}{r}}{1}, \eigbox{\frac{p-1}{r}}{\zeta_r,\zeta_r^2,...,\zeta_r^{r-1}} \right)
\end{align*}
and the sum of eigenvalues of $D(u)$ can not be an integer, since we may assume $p > 3$.\\
Next assume $r \mid p + 1$. Then
 \begin{align*}
 D(u^r) &\sim \left(1, \zeta_p,...,\zeta_p^{p-1} \right), \\
 D(u^p) &\sim \left(\eigbox{\frac{p -r + 1}{r}}{1}, \eigbox{\frac{p + 1}{r}}{1, \zeta_r,\zeta_r^2,...,\zeta_r^{r-1}} \right)
\end{align*}
and again the sum of eigenvalues of $D(u)$ is not an integer, since $p > 3$. 

So let $G =\PGL(2,p^f)$ and let $u \in \V(\ZZ G)$ be of order coprime to $p$ so that the order of $u$ is different from the order of all the elements of $G$. Let $D$ be the $3$-dimensional $p$-modular representation of $G$ described in \cref{Gruppentheorie}\eqref{PSL_characters} with character $\varphi$.  Assume first that the order of $u$ is odd, by \cref{Gruppentheorie} we hence may assume that $u$ is of order $rs$ where $r$ and $s$ are odd primes. Let $g$ and $h$ be elements of $G$ of order $r$ and $s$ respectively. Since
\[ \varphi(u^s) = \sum_{i = 1}^{(r-1)/2} \varepsilon_{g^i}(u^s) (1 + \zeta_r^i + \zeta_r^{-i}) = 1 + \sum_{i = 1}^{(r-1)/2} \varepsilon_{g^i}(u^s) (\zeta_r^i + \zeta_r^{-i}) \]
is the sum of exactly three $r$-th roots of unity and $\sum_{i = 1}^{(r-1)/2} \varepsilon_{g^i}(u^s) = 1$, we obtain that exactly one $\varepsilon_{g^i}(u^s)$ is $1$, while all others are $0$ and $u^s$ is rationally conjugate to an element of $G$. The same applies for $u^r$. Thus there exists a primitive $rs$-th root of unity $\xi$ such that
\[D(u^r) \sim (1,\xi^r, \xi^{-r}) \qquad \text{and} \qquad D(u^s) \sim (1,\xi^s, \xi^{-s}). \]
Since $\varphi(u)$ is real, $D(u)$ has the eigenvalue 1 and two primitive $rs$-th roots of unity as eigenvalues which are inverses of each other.  Thus there exists some $k$ coprime with $rs$ such that
\begin{align*}
\varphi(u) = 1 + \xi^k + \xi^{-k} &= \sum_{i = 1}^{(r-1)/2} \varepsilon_{g^i}(u) (1 + \zeta_r^i + \zeta_r^{-i}) + \sum_{i = 1}^{(s-1)/2} \varepsilon_{h^i}(u) (1 + \zeta_s^i + \zeta_s^{-i}) \\
&= 1 + \sum_{i = 1}^{(r-1)/2} \varepsilon_{g^i}(u) (\zeta_r^i + \zeta_r^{-i}) + \sum_{i = 1}^{(s-1)/2} \varepsilon_{h^i}(u) (\zeta_s^i + \zeta_s^{-i}).
\end{align*}
Hence
\begin{align}\label{xi}
\xi^k + \xi^{-k} = \sum_{i = 1}^{(r-1)/2} \varepsilon_{g^i}(u) (\zeta_r^i + \zeta_r^{-i}) + \sum_{i = 1}^{(s-1)/2} \varepsilon_{h^i}(u) (\zeta_s^i + \zeta_s^{-i}). 
\end{align} 
Consider $\{\xi^j \ | \ \gcd(j,rs) = 1\}$ as a basis of $\mathbb{Z}[\xi]$. Note that a primitive $r$-th root of unity has coefficient sum $-(s-1)$ with respect to this basis, while a primitive $s$-th root of unity has coefficient sum $-(r-1)$. Recall that $\tilde{\varepsilon}_r(u)$ denotes the sum of all partial augmentations of $u$ at classes with elements of order $r$. Then comparing coefficient sums in \eqref{xi} we obtain
\[ 2 = -2(s-1) \sum_{i = 1}^{(r-1)/2} \varepsilon_{g^i}(u) - 2(r-1)\sum_{i = 1}^{(s-1)/2} \varepsilon_{h^i}(u) = -2(s-1)\tilde{\varepsilon}_r(u) - 2(r-1)\tilde{\varepsilon}_s(u).\]
Using $\tilde{\varepsilon}_r(u) + \tilde{\varepsilon}_s(u) = 1$ this implies $-1 = (s-1)(1-\tilde{\varepsilon}_s(u)) + (r - 1)\tilde{\varepsilon}_s(u)$ and so $\tilde{\varepsilon}_s(u) = \frac{-s}{r-s}$. A contradiction, since $r$ and $s$ are odd.  

Finally, assume the order $u$ is even. By \cref{Gruppentheorie} we can assume that $u$ is of order $4r$ where $r$ is an odd prime dividing $q-1$, if $4$ divides $q+1$, and $r$ divides $q+1$, if $4$ divides $q-1$. By \cite[Lemma 3.1]{SIP} we can assume that $u^r$ is rationally conjugate to an element of $G$. Hence
\[D(u^r) \sim (1,\zeta_4, \zeta_4^{-1}) \qquad \text{and} \qquad D(u^4) \sim (1,\zeta_r, \zeta_r^{-1}) \]
for a certain primitive $r$-th root of unity $\zeta_r$. As $\varphi$ is real and $\zeta_4^{-1} = -\zeta_4$, we get that $\varphi(u) = 1 + \zeta_4(\zeta_r^i - \zeta_r^{-i})$ for some integer $i$ coprime to $r$. Hence $\varphi(u)$ is an element of $\mathbb{Q}(\zeta_{4r})$ which does not lie in $\mathbb{Q}(\zeta_r)$. On the other hand the orders of elements in $G$ which do divide $4r$ are $2$, $4$, $r$ and $2r$. On all of these elements $\varphi$ takes a value in $\mathbb{Q}(\zeta_r)$, so no integral linear combination of them can ever land outside of this field. We conclude that the value we computed for $\varphi(u)$ is impossible.
\end{proof}

\begin{theorem}\label{PSL_2_p2}
Let $G$ be an almost simple group with socle $S = \PSL(2,p^2)$ for some prime $p$. Then the Prime Graph Question has an affirmative answer for $G$.
\end{theorem}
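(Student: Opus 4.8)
The plan is to treat all groups $G$ with $\PSL(2,p^2) \le G \le \Aut(\PSL(2,p^2))$ simultaneously, organised by the outer automorphism group. By \cref{Gruppentheorie}\eqref{PSL_autos} the outer automorphism group of $S = \PSL(2,p^2)$ is $C_d \times C_2$ with $d = \gcd(2,p-1)$. For $p=2$ one has $S = \PSL(2,4) \simeq A_5$ with unique overgroup $S_5$, and for $p=3$ one has $S = \PSL(2,9) \simeq A_6$; both socles are $3$-primary and hence covered by \cite{Gitter}. For $p \ge 5$ the groups to consider are $S$, $\PGL(2,p^2)$, the field extension $\operatorname{P\Sigma L}(2,p^2) = \langle S, \sigma \rangle$, a third index-$2$ extension generated by the product of the diagonal and field automorphism, and the full group $\operatorname{P\Gamma L}(2,p^2)$. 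I would verify (PQ) for each by checking, pair of primes by pair of primes, that a unit of order $rs$ forces an element of that order in $G$.

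The organising fact is that the centralizer of a $p$-element of $S$ is a $p$-group (\cref{Gruppentheorie}\eqref{PSL_orders}), so in $S$ and $\PGL(2,p^2)$ there is no element of order $pr$ for any prime $r \neq p$; the only edges the prime graph can miss involve $p$. For a pair $\{r,s\}$ with $r,s \neq p$, a unit of order $rs$ is $p$-regular and so has the order of a group element by \cref{HertweckPSL}(2) for $S$ and \cref{PGL2} for $\PGL(2,p^2)$, which settles these pairs since such elements exist exactly when $rs$ divides $(p^2 \mp 1)/d$. For a pair $\{p,r\}$ I would run the HeLP eigenvalue argument of \cref{PGL2} with the integral Steinberg character $\psi$ of degree $q=p^2$ for $S$, respectively its twist $\eta$ by the non-trivial linear character for $\PGL(2,p^2)$ (the twist being needed only to give a common value on the two involution classes). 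Since $u^r$ has order $p$, $\psi$ vanishes on it and each $p$-th root of unity occurs in $D(u^r)$ with multiplicity $p$, while $u^p$ has order $r$ and $\psi$ (resp.\ $\eta$) is constant on all elements of that order, determining $D(u^p)$; this also follows from \cref{HertweckPSL}(1) for $S$. Writing $D(u) = D(u^{ap})D(u^{br})$ and imposing $\psi(u) \in \ZZ$ forces the eigenvalue multiplicities of $D(u)$ to be constant across the $p$-th roots, which makes $(p^2 \mp 1)/(pr)$ an integer. Because $p \nmid p^2 \mp 1$ this is impossible for every $p$, giving the contradiction.

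The genuinely new work lies in the three field-automorphism extensions. Their prime graphs differ: the involution $\sigma$ centralizes $\PSL(2,p) \le S$, so these groups do contain elements of order $2p$ and the edge $\{2,p\}$ is present; thus (PQ) for $\{2,p\}$ is automatic and only needs to be recorded, whereas for odd $r$ there is still no element of order $pr$. To exclude a unit of such order I would extend $\psi$ to these overgroups, using that it is afforded, modulo the trivial constituent, by the $2$-transitive action on the projective line (available for all of $\operatorname{P\Gamma L}(2,p^2)$ by \cref{Gruppentheorie}\eqref{PSL_characters}) and is therefore integral there. Crucially, for $p$ odd the $p$-elements $u^r$ and the odd-order elements $u^p$ lie in $S$ at the level of partial augmentations, since the outer coset consists of even-order elements; hence $\psi$ takes the same values as before and the identical eigenvalue contradiction applies.

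I expect the main obstacle to be the remaining $p$-regular pairs $\{r,s\}$ in the field extensions, where no order-coincidence result is on the shelf and the ordinary character table genuinely depends on $p$. Here I would combine the integral Steinberg character with the degree-$(p^2+1)$ characters of \cref{Gruppentheorie}\eqref{PSL_characters} and their behaviour under the extension, reducing a $p$-regular unit to its action on the two maximal tori as in \cref{PGL2}, and invoke \cref{Wagner} to control partial augmentations modulo small primes. The delicate part is bookkeeping: for each overgroup one must match the orders $2s$ realised by $\sigma$-centralizers against the orders a unit may have, and confirm that no $p$-regular unit can acquire an order absent from $G$.
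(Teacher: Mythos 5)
Your reduction to the five groups $S$, $\PGL(2,p^2)$, $\operatorname{P\Sigma L}(2,p^2)$, $M(p^2)$, $\operatorname{P\Gamma L}(2,p^2)$ and your use of \cref{HertweckPSL}(2) and \cref{PGL2} for the $p$-regular pairs in $S$ and $\PGL(2,p^2)$ are sound, but the core of your proposal --- the Steinberg eigenvalue argument for the pairs $\{p,r\}$ --- fails for $f=2$. Integrality of $\psi$ on all powers of $u$ forces the eigenvalue multiplicities of $D(u)$ to be constant only on \emph{Galois orbits} of $pr$-th roots of unity; these orbits are four in number, giving four independent multiplicities $A$ (primitive $pr$-th roots), $B$ (primitive $p$-th), $C$ (primitive $r$-th), $E$ (eigenvalue $1$). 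It does not force multiplicities to be ``constant across the $p$-th roots''. The constraints coming from $D(u^r)$ (each $p$-th root of unity with multiplicity $p$) and from $D(u^p)$ (rationally conjugate to a group element of order $r$, say with $r \mid \frac{p^2-1}{2}$) read
\[ A(r-1)+B \;=\; p, \qquad C(r-1)+E \;=\; p, \qquad A(p-1)+C \;=\; \tfrac{p^2-1}{r}, \qquad B(p-1)+E \;=\; \tfrac{p^2-1}{r}+1, \]
a system of rank $3$ in four unknowns, so a one-parameter family of solutions exists. Non-negative integer solutions occur: for $p=5$, $r=3$ take $(A,B,C,E)=(2,1,0,5)$; the corresponding unit has $\psi(u)=6$, i.e.\ $\tilde{\varepsilon}_3(u)=6$ and $\tilde{\varepsilon}_5(u)=-5$, and this even satisfies both congruences of \cref{Wagner}. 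Hertweck's argument (the $f=1$ case in \cref{PGL2}) works precisely because there the Steinberg character has degree $p$, so each $p$-th root occurs in $D(u^r)$ with multiplicity $1$ and the first equation collapses to $A(r-1)+B=1$, forcing $A=0$ for odd $r$ and producing the contradiction; at degree $p^2$ this forcing disappears. This is exactly why the paper abandons the Steinberg character for the orders $pq$, $pr$ and $2p$ and instead uses the degree-$(p^2+1)$ characters $\chi_1$ (and sums of their Galois conjugates) induced to the relevant overgroup, played off against \cref{Wagner}.

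There are two further gaps. First, your prime-graph bookkeeping for $M(p^2)$ is wrong: $\sigma \notin M(p^2)$, and in fact every element of prime order of $M(p^2)$ lies in $S$, so $M(p^2)$ contains \emph{no} element of order $2p$ (nor of order $2q$ for $q \mid \frac{p^2+1}{2}$); the edge $\{2,p\}$ is not ``automatic'' there, and a dedicated argument is required --- the paper treats units of order $2p$ in $\ZZ M(p^2)$ and $\ZZ\PGL(2,p^2)$ with the induced character of $\chi_1$, the linear character $\tau$, \cref{Lemmaxy} and \cref{Wagner}. Second, the cases you defer as the ``main obstacle'' --- units of order $qr$ with $q \mid \frac{p^2+1}{2}$ and odd $r \mid \frac{p^2-1}{2}$ in $\operatorname{P\Gamma L}(2,p^2)$, and units of order $2q$ in $\operatorname{P\Sigma L}(2,p^2)$ and $M(p^2)$ --- constitute most of the paper's actual proof: rational conjugacy of $u^q$ and $u^r$ via the induced $6$-dimensional $p$-modular character, then coefficient-sum computations in $\ZZ[\zeta+\zeta^{-1}]$, ending for order $2q$ in the contradiction $q=3$ against $q \equiv 1 \bmod 4$. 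Your sketch for these contains no workable mechanism, so as it stands the proposal settles only the pairs already covered by the quoted results \cref{HertweckPSL} and \cref{PGL2}, and every case that genuinely needs a new argument is either treated incorrectly or left open.
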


\begin{proof}
If $p = 2$, then $G$ is the alternating or symmetric group of degree $5$. In that case even the Zassenhaus Conjecture holds for $G$ \cite{LutharPassi, LutharTrama}. If $p = 3$, then $S$ is the alternating group of degree $6$ and the Prime Graph Question has been answered for all possibilities of $G$ \cite{HertweckA6, KonovalovKimmiStAndrews, Gitter}. So assume $p \geq 5$.\\
We will use the facts given in \cref{Gruppentheorie}\eqref{PSL_autos}. Let $g$ be an involution in $\PGL(2,p^2)$ not lying in $\PSL(2,p^2)$ and let $\sigma$ be the automorphism of $S$ induced by the Frobenius automorphism of $\mathbb{F}_{p^2}$. Then the outer automorphism group of $S$ is generated by $g$ and $\sigma$, modulo the inner automorphisms, and is isomorphic to the Klein four group, see \cite[XI, Example 1.3]{HuppertIII}. We will use the names for the groups given in \cref{normal_subgroups_AutPSL2p2} where $\PGL(2,p^2) = \langle \PSL(2,p^2), g \rangle$ and $\operatorname{P \Sigma L}(2,p^2) = \langle \PSL(2,p^2), \sigma \rangle$. 
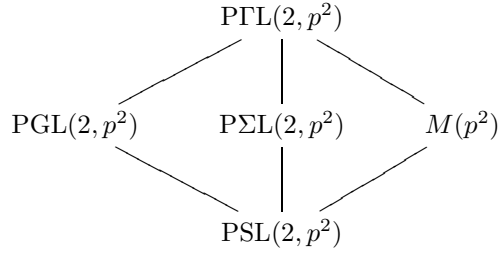
\begin{figure}[ht]
\caption{Almost simple groups containing $\PSL(2,p^2)$, indices of successive groups are $2$.}
\label{normal_subgroups_AutPSL2p2}
\begin{equation*}
  \xymatrix{
  & \operatorname{P \Gamma L}(2,p^2) \ar@{-}[dl] \ar@{-}[d] \ar@{-}[dr] & \\
  \PGL(2,p^2) \ar@{-}[dr] & \operatorname{P\Sigma L}(2,p^2) \ar@{-}[d] & M(p^2) \ar@{-}[dl] \\
  & \operatorname{PSL}(2,p^2) & }
\end{equation*}
\end{figure}

Let $q$ and $r$ be different primes dividing the order of $S$.
Then $S$ contains elements of order $qr$ if and only if $q$ and $r$ both divide $\frac{p^2-1}{2}$ or both divide $\frac{p^2+1}{2}$.
Moreover $\PGL(2,p^2)$ contains an element of order $2q$ for any $q \neq p$, cf. \cref{Gruppentheorie}\eqref{PSL_orders}, and $\operatorname{P \Sigma L}(2,p^2)$ contains elements of order $2p$ and $2q$ whenever $q \mid \frac{p^2-1}{2}$, since $\sigma$ commutes exactly with the elements lying in the natural subgroup $\PSL(2,p)$. Note that if $C_1$ and $C_2$ are different conjugacy classes in $\PSL(2,p^2)$ of elements of prime order $q \neq p$, then they are still different classes in $\PGL(2,p^2)$ by \cref{Gruppentheorie}\eqref{PSL_orders}. They fuse into one class in $\operatorname{P \Sigma L}(2,p^2)$, $M(p^2)$ and $\operatorname{P \Gamma L}(2,p^2)$ if and only if the $p$-th powers of elements in $C_1$ lie in $C_2$, in particular $q \mid \frac{p^2 + 1}{2}$. This can be seen when thinking of the elements of $C_1$ as diagonalized matrices (if necessary over a bigger field). Then $\sigma$ is conjugating such an element to its $p$-th power. Note that all elements of prime order in $M(p^2)$ already lie in $S$. This is clear for elements of odd order and follows for involutions from \cite[End of proof of Lemma 2.3]{GorensteinCentralizers}.

Let $q \mid \frac{p^2 + 1}{2}$ and $2 \not= r \mid \frac{p^2 - 1}{2}$ be primes. By the discussion above it suffices to prove that the following units $u \in \V(\ZZ G)$ do not exist:
\begin{itemize}
\item $G = \operatorname{P \Gamma L}(2,p^f)$ and $u$ has order $qr$, $pq$ or $pr$.
\item $G = M(p^2)$ and $u$ has order $2p$ or $2q$.
\item $G = \PGL(2,p^2)$ and $u$ has order $2p$.
\item $G = \operatorname{P \Sigma L}(2,p^2)$ and $u$ has order $2q$.
\end{itemize}

We will consider all these cases, some of them simultaneously.

First let $G = \operatorname{P \Gamma L}(2,p^2)$ and $q \mid \frac{p^2 + 1}{2}$ and $r \mid \frac{p^2 - 1}{2}$ be odd primes. Let $\psi$ be the $p$-modular Brauer character of $G$ of degree $6$ induced from the Brauer character $\varphi$ of $\PGL(2,p^2)$ having degree $3$ described in \cref{Gruppentheorie}\eqref{PSL_characters}. Let $D$ be a representation affording $\psi$. 
We will show first that units of order $r$ and $q$ in $\V(\ZZ G)$ are rationally conjugate to elements of $G$. If $v \in \V(\ZZ G)$ has order $r$ and $g$ is an element of order $r$ in $G$ then $g$ lies in $\PGL(2,p^2)$ and $\psi(g)$ is just twice the value of $\varphi(g)$. We thus can argue in the same way as in the proof of \cref{PGL2} to prove that $v$ is rationally conjugate to an element of $G$. If $v$ is of order $q$, let $g$ be an element of order $q$ in $G$. Then there exists a primitive $q$-th root of unity $\xi$ such that $D(g) \sim (1,\xi, \xi^{-1}, 1, \xi^{p}, \xi^{-p})$. Let $C_1,...,C_k$ be the conjugacy classes of elements of order $q$ in $G$ such that $g^i \in C_i$ for all $i$. Then
\[\psi(v) = 2 + \sum_{i}\varepsilon_{C_i}(v) \left(\xi^i + \xi^{-i} + \xi^{ip} + \xi^{-ip}\right). \]
Viewing $\{\xi^i + \xi^{-i} + \xi^{ip} + \xi^{-ip} \ | \ (i,q) = 1\}$ as a basis of $\ZZ[\xi + \xi^{-1} + \xi^{p} + \xi^{-p}]$, using the fact that $\sum_i\varepsilon_{C_i}(v) = 1$ and that $\psi(v)$ is the sum of exactly $6$ roots of unity, we obtain that $v$ is rationally conjugate to an element in $G$.

Assume now that $u \in \V(\ZZ G)$ is a unit of order $qr$. Let  $\zeta$ be a primitive $qr$-th root of unity. For $g \in G$ of order $r$ we have $\psi(g) = 2(1 + \zeta^{qi} + \zeta^{-qi})$ for some $i$ coprime to $r$ and for $h \in G$ of order $q$ we have $\psi(h) = 2 + \zeta^{rj} + \zeta^{-rj} + \zeta^{prj} + \zeta^{-prj}$ for some $j$ coprime to $q$. Consider $B = \{\zeta^i + \zeta^{-i} \ | \ (i,qr) = 1 \}$ as a $\mathbb{Z}$-basis of $\ZZ[\zeta + \zeta^{-1}]$. Then by the above and using $\tilde{\varepsilon}_r(u) + \tilde{\varepsilon}_q(u) = 1$ the coefficient sum of $\psi(u)$ expressed with respect to the basis $B$ is exactly
\begin{align}
\tilde{\varepsilon}_r(u)&\left((q-1)(r-1) - 2(q-1)\right) + \tilde{\varepsilon}_q(u)((q-1)(r-1) - 2(r-1))  \nonumber \\
=& (q-1)(r-1) - 2(q-1) \tilde{\varepsilon}_r(u) - 2(r-1)(1 - \tilde{\varepsilon}_r(u)) \nonumber \\
=& (q-3)(r-1) + 2(r-q)\tilde{\varepsilon}_r(u). \label{CoeffSum}
\end{align}
On the other hand from the fact that $u^r$ and $u^q$ are rationally conjugate to elements of $G$ we conclude that there exists an $i$ coprime to $r$ and a $j$ coprime to $q$ such that
\[D(u^q) \sim (1,1,\zeta^{qi},\zeta^{-qi}, \zeta^{qi}, \zeta^{-qi}), \ \ \ \ D(u^r) \sim (1,1,\zeta^{rj}, \zeta^{-rj}, \zeta^{prj}, \zeta^{-prj}).\]
Then $\psi(u)$ is either the sum of $2$ and four primitive $qr$-th roots of unity or the sum of two primitive $r$-th roots of unity, two primitive $q$-th roots of unity and two primitive $qr$-th roots of unity. Comparing the corresponding coefficient sum expressed with respect to $B$ to the sum computed in \eqref{CoeffSum} we find in the first case
\[(q-1)(r-1) + 2 = (q - 3)(r-1) + 2(r-q)\tilde{\varepsilon}_r(u)\ \  \Leftrightarrow\ \ \tilde{\varepsilon}_r(u) = \frac{r}{r-q}, \]
a contradiction. In the second case we obtain
\[-(q-1) -(r-1) + 1 = (q - 3)(r-1) + 2(r-q)\tilde{\varepsilon}_r(u). \]
The left hand side of this equation is odd, while the right side is even, so $u$ does not exist.

Now assume that $u \in \V(\ZZ G)$ is of order $pq$, for a prime $q \mid \frac{p^2 + 1}{2}$. Denote by $C$ the conjugacy class of elements of order $p$ in $G$ and let $\zeta$ be a primitive $pq$-th root of unity. We will use the ordinary character $\chi_1$ of $\PSL(2, p^2)$ of degree $p^2 + 1$ described in \cref{Gruppentheorie}\eqref{PSL_characters}. It takes the value $0$ on each conjugacy class of elements of order $q$, if $q \mid \frac{p^2+1}{2}$, and the sum of two primitive $r$-th roots of unity, if $r \mid \frac{p^2-1}{2}$. Recall that $\chi_1(g) = 1$ for elements of order $p$. Let $\chi$ be the character induced from $\chi_1$ on $G$. Computing several multiplicities of eigenvalues of a representation realizing $\chi$ using \eqref{HeLP-restrictions} we obtain
\begin{align*}
\mu(\zeta^p, u, \chi) &= \frac{1}{pq}\left(4(p^2 + 1) + \Tr_{\QQ(\zeta^q)/\QQ}(4) + \varepsilon_C(u)\Tr_{\QQ(\zeta)/\QQ}(4\zeta^{-p})\right) \\ &= \frac{4}{pq}\left(p^2 + p - (p - 1)\varepsilon_C(u)\right), \\[.5cm]
\mu(1, u, \chi) &= \frac{1}{pq}\left(4(p^2 + 1) + \Tr_{\QQ(\zeta^q)/\QQ}(4) + \varepsilon_C(u)\Tr_{\QQ(\zeta)/\QQ}(4)\right) \\ &= \frac{4}{pq}\left(p^2 + p + (p - 1)(q - 1)\varepsilon_C(u)\right) 
\end{align*} 
Since $\varepsilon_C(u) \equiv 0 \mod p$ by \cref{Wagner} and $p \geq 5$ these multiplicities imply $\varepsilon_C(u) \in \{0, p\}$, since otherwise one of the multiplicities is negative. But since also $\varepsilon_C(u) \equiv 1 \mod q$ by \cref{Wagner} we get $q \mid \frac{p^2 - 1}{2}$, contradicting our assumption on $q$. 

So assume $u \in \V(\ZZ G)$ is a unit of order $pr$ with an odd prime $r \mid \frac{p^2 - 1}{2}$. Denote by $\chi$ the character of $G$ obtained from inducing the sum of all Galois conjugates of $\chi_1$. Thus $\chi$ has degree $4(p^2 + 1) \cdot \frac{r-1}{2}$, $\chi(g) = -4$, if $g$ is of order $r$, and $\chi(h) = 4\cdot \frac{r - 1}{2}$ for every $h$ of order $p$. Using again \eqref{HeLP-restrictions} and $\varepsilon_C(u) + \tilde{\varepsilon}_r(u) = 1$ we obtain
\begin{align*}
\mu(\zeta^p, u, \chi) =& \frac{1}{pr}\Big(4(p^2 + 1)\frac{r - 1}{2} + \Tr_{\QQ(\zeta^r)/\QQ}\left(4\frac{r - 1}{2}\right) + \Tr_{\QQ(\zeta^p)/\QQ}(-4\zeta^p) \\ &+ \varepsilon_C(u)\Tr_{\QQ(\zeta)/\QQ}\left(4\frac{r - 1}{2}\zeta^{-p}\right) + \tilde{\varepsilon}_r(u)\Tr_{\QQ(\zeta)/\QQ}(-4\zeta^{-p}) \Big) \\
 =& \frac{4}{pr}\left(p(p+1)\frac{r - 1}{2} + p - (\frac{r + 1}{2})(p - 1)\varepsilon_C(u)\right), \\[.5cm]
\mu(1, u, \chi) =& \frac{1}{pr}\bigg(4(p^2 + 1)\frac{r - 1}{2} + \Tr_{\QQ(\zeta^r)/\QQ}\left(4\frac{r - 1}{2}\right) + \Tr_{\QQ(\zeta^p)/\QQ}(-4) \\ &+ \varepsilon_C(u)\Tr_{\QQ(\zeta)/\QQ} \left(4\frac{r - 1}{2}\right) + \tilde{\varepsilon}_r(u)\Tr_{\QQ(\zeta)/\QQ}(-4) ) \bigg) \\ 
=& \frac{4}{pr}\left(p(p+1)\frac{r - 1}{2} - p(r - 1) + (\frac{r^2 - 1}{2})(p - 1)\varepsilon_C(u)\right). 
\end{align*} 
Since $p \geq 5$ and $\varepsilon_C(u) \equiv 0 \mod p$ and $\varepsilon_C(u) \equiv 1 \mod r$ not both these multiplicities can simultaneously be non-negative integers, completing the proof of the Prime Graph Question for $G = \operatorname{P \Gamma L}(2,p^2)$.

Assume next that $G = M(p^2)$ or $G = \PGL(2,p^2)$ and $u$ has order $2p$. We will use the character $\chi$, which is the induced character of the character $\chi_1$ of degree $p^2 + 1$ of $S$ described in \cref{Gruppentheorie}\eqref{PSL_characters}. Denote by $\cc{2a}$ the conjugacy class of involutions in $G$ which lie already in $S$ and by $\cc{2b}$ the other class of involutions in $G$, if $G = \PGL(2,p^2)$. If $ G= M(p^2)$, the unique class of involutions is contained in $S$ and the class $\cc{2b}$ can just be ignored in the following computations (see the discussion following \cref{normal_subgroups_AutPSL2p2}). Denote by $C$ again the class of elements of order $p$. Then $\chi(\cc{2a}) = -4, \chi(C) = 2$ and $\chi(\cc{2b}) = 0$. There is a character $\tau$ containing $S$ in its kernel and mapping elements outside of $S$ to $-1$. By \cref{Lemmaxy} $u^p$ is then rationally conjugate to an element of $G$. Assume first that $u^p$ is rationally conjugate to an element of $\cc{2a}$. From $\tau$ we then get
\[\varepsilon_{\cc{2a}}(u) + \varepsilon_C(u) - \varepsilon_{\cc{2b}}(u) = 1\]
and since $u$ is normalized this implies $\varepsilon_{\cc{2b}}(u) = 0$ and $\varepsilon_{\cc{2a}}(u) = 1 - \varepsilon_C(u)$. Denoting by $\zeta$ a primitive $2p$-th root of unity and computing again multiplicities of certain roots of unity as eigenvalues of $u$ under a representation realizing $\chi$ we obtain 
\begin{align*}
\mu(-1, u, \chi) =& \frac{1}{2p}\Big(2(p^2 + 1) + \Tr_{\QQ/\QQ}((-4)\cdot (-1)) + \Tr_{\QQ(\zeta^2)/\QQ}(2) \\ 
&+ \varepsilon_C(u)\Tr_{\QQ(\zeta)/\QQ}(2\cdot (-1)) + \varepsilon_{\cc{2a}}(u)\Tr_{\QQ(\zeta)/\QQ}(-4\cdot (-1) ) \Big) \\ 
=& \frac{1}{p}\left(p^2 + 3p -3(p - 1)\varepsilon_C(u)\right), \\[.5cm]
\mu(1, u, \chi) =& \frac{1}{2p}\Big(2(p^2 + 1) + \Tr_{\QQ/\QQ}(4) + \Tr_{\QQ(\zeta^2)/\QQ}(2) \\ 
&+ \varepsilon_C(u)\Tr_{\QQ(\zeta)/\QQ}(2) + \varepsilon_{\cc{2a}}(u)\Tr_{\QQ(\zeta)/\QQ}(-4)  \Big) \\ 
=& \frac{1}{p}\left(p^2 - p + 3(p - 1)\varepsilon_C(u) \right). 
\end{align*} 
Since $p \geq 5$ this contradicts $\varepsilon_C(u) \equiv 0 \mod p$ and $\varepsilon_C(u) \equiv 1 \mod 2$ and the fact that both expressions are non-negative. 

So assume $u^p$ is rationally conjugate to elements in $\cc{2b}$. Then form $\tau$ we get $\varepsilon_{\cc{2b}}(u) = 1$ and $\varepsilon_{\cc{2a}}(u) = -\varepsilon_C(u)$. Computing the multiplicities as above we obtain
\begin{align*}
\mu(-1, u, \chi) =& \frac{1}{2p}\Big(2(p^2 + 1) + \Tr_{\QQ(\zeta^2)/\QQ}(2) \\
&+ \varepsilon_C(u)\Tr_{\QQ(\zeta)/\QQ}(2\cdot (-1)) + \varepsilon_{\cc{2a}}(u)\Tr_{\QQ(\zeta)/\QQ}(-4\cdot (-1) ) \Big) \\ 
=& \frac{1}{p}(p^2 + p -3(p - 1)\varepsilon_C(u)), \\[.5cm]  
\mu(1, u, \chi) =& \frac{1}{2p}\Big(2(p^2 + 1) + \Tr_{\QQ(\zeta^2)/\QQ}(2) \\ 
&+ \varepsilon_C(u)\Tr_{\QQ(\zeta)/\QQ}(2) + \varepsilon_{\cc{2a}}(u)\Tr_{\QQ(\zeta)/\QQ}(-4)  \Big) \\ 
=& \frac{1}{p}(p^2 + p + 3(p - 1)\varepsilon_C(u)).
\end{align*} 
This contradicts once again $\varepsilon_C(u) \equiv 0 \mod p$ and $\varepsilon_C(u) \equiv 1 \mod q$ and the non-negativity of the multiplicities.

So finally let $G = \operatorname{P \Sigma L}(2,p^2)$ or $G = M(p^2)$ and let $u$ be of order $2q$ where $q \mid \frac{p^2 + 1}{2}$ is an odd prime. Denote by $\cc{2a}$ again the conjugacy class of involutions in $G$ which lie in $S$ and by $\cc{2b}$ and $\cc{2c}$ the other conjugacy classes of involutions in $G$, if $G = \operatorname{P \Sigma L}(2,p^2)$. If $G = M(p^2)$, the group has a unique class of involutions and the classes $\cc{2b}$ and $\cc{2c}$ can again be ignored in the following computations (see the discussion following \cref{normal_subgroups_AutPSL2p2}). Let $C_1,...,C_k$ be the conjugacy classes of elements of order $q$ in $G$ such that for some fixed elements $g$ of order $q$ we have $g^i \in C_i$. Denote by $\psi$ the character induced from the $3$-dimensional $p$-modular character $\varphi$ of $S = \PSL(2, p^2)$ described in \cref{Gruppentheorie}\eqref{PSL_characters} and by $\tau$ the character containing $S$ in the kernel and sending elements outside of $S$ to $-1$. Since 
\[\tau(u^q) = \varepsilon_{\cc{2a}}(u^q) - \varepsilon_{\cc{2b}}(u^q) - \varepsilon_{\cc{2c}}(u^q) \in \{\pm 1\} \]
and $u^q$ is normalized, we get $\varepsilon_{\cc{2a}}(u^q) \in \{0,1\}$. Now $\psi(\cc{2a}) = -2$, $\psi(\cc{2b}) = \psi(\cc{2c}) = 0$ and $\psi(C_i) = 2 + \zeta^i + \zeta^{-i} + \zeta^{ip} + \zeta^{-ip}$ for some fixed $q$-th root of unity $\zeta$. Consider $B = \{\zeta^i + \zeta^{-i} + \zeta^{ip} + \zeta^{-ip} | (i,q) = 1 \}$ as a basis of $\ZZ[\zeta + \zeta^{-1} + \zeta^{p} + \zeta^{-p}]$ and let $D$ be a representation realizing $\psi$. Separating the cases $\varepsilon_{2a}(u^q) = 1$ and $\varepsilon_{\cc{2a}}(u^q) = 0$ we have
\[D(u^q) \sim (1,1,-1,-1,-1,-1) \ \ \ \text{and} \ \ \  D(u^q) \sim (1,1,1,-1,-1,-1) \]
respectively.  In any case $u^2$ is rationally conjugate to an element of $G$, which we can prove as in the case of $G = \operatorname{P \Gamma L}(2,p^2)$ and this implies the existence of some $i$ coprime to $q$ such that
\[D(u^2) \sim (1,1,\zeta^i, \zeta^{-i}, \zeta^{ip}, \zeta^{-ip}).\]
Since $\psi(u)$ is in the $\ZZ$-span of $B$ we get
\[D(u) \sim (1,1,-\zeta^i,-\zeta^{-i}, -\zeta^{ip},-\zeta^{-ip}).\]
Thus the coefficient sum of $\psi(u)$ with respect to $B$ is $2\cdot (-\frac{q - 1}{4}) - 1 = -\frac{q + 1}{2}$. Moreover from the eigenvalues of $D(u)$ we deduce that $\varepsilon_{\cc{2a}}(u^q) = 1.$  Then $\tau(u^q) = 1$ and this yields
\[1 = \tau(u) = \varepsilon_{\cc{2a}}(u) + \tilde{\varepsilon}_{q}(u) -\varepsilon_{\cc{2b}}(u) -\varepsilon_{\cc{2c}}(u). \]
Since $u$ is normalized, we obtain $-\varepsilon_{\cc{2b}}(u) - \varepsilon_{\cc{2c}}(u) = 0$ and thus $\varepsilon_{\cc{2a}}(u) = 1 - \tilde{\varepsilon}_r(u)$.

On the other hand 
\[\psi(u) = -2\varepsilon_{\cc{2a}}(u) + \sum_{i} \varepsilon_{C_i} (2 + \zeta^i + \zeta^{-i} + \zeta^{ip} + \zeta^{-ip}). \]
Using $\varepsilon_{2a}(u) = 1 - \tilde{\varepsilon}_q(u)$, this value expressed in the basis $B$ has coefficient sum 
\[\frac{q - 1}{2}\varepsilon_{\cc{2a}}(u) - \left(\frac{q - 3}{2}\right)\tilde{\varepsilon}_q(u) 
= \frac{q - 1}{2} +(2 - q)\tilde{\varepsilon}_q(u). \]
Comparing this with the coefficient sum computed above this means $\tilde{\varepsilon}_q(u) = -\frac{q}{2 - q}$, implying $q = 3$. However $q$ is a divisor of $\frac{p^2 + 1}{2}$ by assumption and any divisor $d$ of $\frac{p^2 + 1}{2}$ satisfies $d \equiv 1 \mod 4$, hence also $q = 3$ is impossible.
\end{proof}

\begin{remark} For $G = \PSL(2, p^3)$, $p \geq 3$ it is not known whether the integral group ring of $G$ contains a normalized unit of order $2p$.\end{remark}

\section{Some particular 4-primary groups}

For some almost simple 4-primary groups it is not possible to obtain the maximal information on the Prime Graph Question the HeLP method can give using only the package \cite{HeLPPackage}. The reason is mostly that some character and Brauer tables are known but not yet available in the Character table library of \textsf{GAP} \cite{CTblLib} or the \textsf{GAP} Atlas of Group Representations \cite{AtlasRep} or that the package \cite{HeLPPackage} cannot solve the underlying inequalities. These (series of) groups are handled in the following lemmas.  

\begin{lemma}\label{Serien} Let $G$ be a $4$-primary group isomorphic to some $\PSL(2,2^f)$, $\PSL(2,3^f)$ or $\PGL(2,3^f)$. Let $p, q \in \pi(G)$ be different primes. Then there is an element of order $pq$ in $\V(\ZZ G)$ if and only if there is an element of that order in $G$ except possibly for the following cases:
\begin{itemize}
 \item $G \simeq \PGL(2,81)$ and $p \cdot q = 3 \cdot 5$.
 \item $G \simeq \PGL(2,243)$ and $p \cdot q = 3 \cdot 11$. 
 \item $G \simeq \PSL(2,2^f)$ or $G \simeq \PSL(2, 3^f)$ or $G \simeq \PGL(2,3^f)$ and $p \cdot q = 2 \cdot 3$.
\end{itemize}
\end{lemma}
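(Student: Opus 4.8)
Throughout let $\ell \in \{2,3\}$ be the defining characteristic, so $G$ is defined over $\FF_{\ell^f}$ with $d = \gcd(2,\ell-1)$, and let $p,q$ denote the two primes of the statement.

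\emph{Reduction to order $\ell r$.} First I would dispose of every product $pq$ with $\ell \notin \{p,q\}$. By \cref{Gruppentheorie}\eqref{PSL_orders} two primes coprime to $\ell$ are joined in $G$ precisely when they lie in a common cyclic torus, i.e.\ both divide $\frac{\ell^f-1}{d}$ or both divide $\frac{\ell^f+1}{d}$. Going through the prime structures permitted by \cref{4primaryProposition}, in each such case either $G$ already contains an element of order $pq$ (and there is nothing to prove) or it does not, and then \cref{HertweckPSL}(2) for $\PSL$, respectively \cref{PGL2} for $\PGL$, shows immediately that $\V(\ZZ G)$ contains no unit of order $pq$. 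Hence only units of order $\ell r$ with $r \neq \ell$ prime remain, and $G$ has no element of this order.

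\emph{The generic estimate.} Let $u \in \V(\ZZ G)$ have order $\ell r$. By \cref{HertweckPSL}(1) the unit $u^{\ell}$, of prime order $r \neq \ell$, is rationally conjugate to an element of $G$, whereas $u^{r}$ is an $\ell$-element; writing $e = \tilde{\varepsilon}_{\ell}(u)$, \cref{Wagner} gives $e \equiv 0 \bmod \ell$, $e \equiv 1 \bmod r$, and $e + \tilde{\varepsilon}_{r}(u) = 1$. I would then feed into \eqref{HeLP-restrictions} the rational character $\chi$ obtained by summing all Galois conjugates of the degree-$(\ell^f+1)$ character $\chi_i$ of \cref{Gruppentheorie}\eqref{PSL_characters} when $r \mid \frac{\ell^f-1}{d}$, and of the cuspidal character of degree $\ell^f-1$ when $r \mid \frac{\ell^f+1}{d}$. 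This $\chi$ is constant on the $\ell$-elements, constant on the elements of order $r$, and vanishes on the rest, so evaluating the multiplicities at the eigenvalues $1$ and a primitive $\ell$-th root of unity should produce, up to the positive factor $\frac{r-1}{2\ell r}$, the two quantities $(\ell^f-\ell)+(\ell-1)(r+1)e$ and $\ell^f-(r+1)e$ (the cuspidal case differing only by harmless signs). Non-negativity confines $e$ to the interval $\bigl[-\tfrac{\ell^f-\ell}{(\ell-1)(r+1)},\,\tfrac{\ell^f}{r+1}\bigr]$, which no solution of $e \equiv 0 \bmod \ell$, $e \equiv 1 \bmod r$ meets once $r$ is large: then the least positive solution already exceeds $\frac{\ell^f}{r+1}$, and the greatest negative one falls below the left endpoint. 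This rules out $\ell r$ for every large prime factor $r$ of $\frac{\ell^f\mp 1}{d}$, hence settles all three infinite families apart from $r \in \{2,3\}$, together with $2\cdot 17$ for $\PSL(2,16)$, $3\cdot 41$ over $\FF_{81}$ and $3\cdot 61$ over $\FF_{243}$.

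\emph{The small primes.} There remain the products with $r$ small relative to $\ell^f$: the order-$6$ units ($r=3$ in characteristic $2$, $r=2$ in characteristic $3$), which are exactly the universal exceptions and are set aside, and the sporadic products $3\cdot 5$ over $\FF_{81}$ and $3\cdot 11$ over $\FF_{243}$, for which the interval above is nonempty. For the non-exceptional groups $\PSL(2,81)$ and $\PSL(2,243)$ I would sharpen the argument by replacing the Galois sum with the individual characters $\chi_i$, which separate the $\frac{r-1}{2}\ge 2$ classes of elements of order $r$, and by using the exact eigenvalues of $D(u^{\ell})$ supplied by the rational conjugacy of $u^{\ell}$; the enlarged system of inequalities on the individual partial augmentations $\varepsilon_C(u)$ is then inconsistent (equivalently, these two groups are cleared directly by the \textsf{HeLP} package \cite{HeLPPackage}). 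For the matching $\PGL$-groups the refined system still admits a non-negative solution, so $\PGL(2,81)$ with $15$ and $\PGL(2,243)$ with $33$ are listed as exceptions and postponed to the sequel.

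\emph{Main obstacle.} The generic estimate is uniform and essentially mechanical; the real difficulty is the last step. Isolating the exact exception set — in particular explaining why the sharpened inequalities close for $\PSL(2,81),\PSL(2,243)$ but not for $\PGL(2,81),\PGL(2,243)$ — hinges on the finer conjugacy-class data: the number of classes of order-$r$ elements (e.g.\ five classes of order $11$ because $\frac{243-1}{2}=11^2$), their behaviour under fusion from $\PSL$ to $\PGL$, and which characters remain available over $\PGL$. Controlling the traces $\Tr_{\QQ(\zeta^{d})/\QQ}$ uniformly across the infinite families, and confirming that no additional ordinary or Brauer character constrains the order-$6$ case, is where the bulk of the effort goes.
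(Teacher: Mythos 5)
Your proposal is correct, and it runs on a genuinely different engine than the paper's proof. Both start with the same reduction: \cref{HertweckPSL}(2) for the $\PSL$'s and \cref{PGL2} for the $\PGL$'s kill every order $pq$ coprime to the defining characteristic, leaving only the mixed orders $\ell r$. From there the paper does \emph{not} invoke \eqref{HeLP-restrictions} or \cref{Wagner} at all: it takes a representation $D$ affording the Steinberg character (degree $\ell^f$), writes down the eigenvalues of $D(u^r)$ and $D(u^{\ell})$, and observes that no pairing of these eigenvalue multisets can make the trace of $D(u)$ rational, because the multiplicity $\ell^{f-1}$ of each nontrivial eigenvalue of $D(u^{\ell})$ is incompatible with the small blocks (of size $2$, $3$ or $4$) of $r$-th roots of unity in $D(u^r)$. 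You instead sum the Galois conjugates of the degree-$(\ell^f\pm1)$ characters and play the multiplicities $\mu(1,u,\chi)$ and $\mu(\zeta_\ell,u,\chi)$ against the congruences of \cref{Wagner}; I checked your two displayed quantities against \eqref{HeLP-restrictions} and they are exactly right (and the cuspidal case does only change signs), so your argument closes precisely when $(r+1)^2$ is large compared with $\ell^f$, which holds in every case that is not a listed exception. This is the technique the paper reserves for the proof of \cref{PSL_2_p2}, and for \cref{Serien} it buys two things the paper's write-up does not make explicit: it applies verbatim to $\PSL(2,3^f)$, whereas the paper's proof only treats $\PSL(2,2^f)$ and $\PGL(2,3^f)$ and silently leaves the $\PSL(2,3^f)$ cases to the containment $\V(\ZZ \PSL)\subseteq \V(\ZZ \PGL)$, to \cref{HeLPTable}, and to the forward reference \cref{PSL2243}; and it quantifies exactly why the exceptions (order $6$, and $3\cdot 5$, $3\cdot 11$ over $\FF_{81}$, $\FF_{243}$) escape the method, namely the interval becomes wide enough to contain a solution of the congruences. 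What the paper's Steinberg argument buys in return is brevity: a single integral character, no congruences, no trace formula. Your residual cases are discharged the same way the paper discharges them -- by machine computation (\cref{HeLPTable}, \cref{PSL2243}); the only soft spot is your parenthetical claim that the individual characters $\chi_i$ together with rational conjugacy of the powers already close $\PSL(2,81)$ with $15$ and $\PSL(2,243)$ with $33$, which you do not verify, but the package fallback you offer is legitimate and is what the paper itself relies on (its \cref{PSL2243} actually uses a character of degree $\tfrac{243-1}{2}$ rather than the $\chi_i$).
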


\begin{proof}
For $\PSL(2,16), \PGL(2,27)$ and $\PGL(2,81)$ the result follows from \cref{HeLPTable}.

So assume $G = \PSL(2,2^f)$ or $G = \PGL(2,3^f)$ with $f \geq 5$. Let $D$ be a representation affording the Steinberg character of $G$, cf.\ \cref{Gruppentheorie}\eqref{PSL_characters}. For a positive integer $n$, let $\zeta_n$ denote a primitive $n$-th root of unity.

 First assume that $G = \PSL(2,2^f)$. By \cref{4primaryProposition} the prime divisors of $|G|$ are $2$, $3$, $r = \frac{2^f+1}{3}$ and $s = 2^f-1$. By \cref{HertweckPSL} it suffices to consider elements of order $2r$ and $2s$. So let $u \in \V(\ZZ G)$ be a unit of order $2r$ or $2s$. Then, 
 \begin{align*}
  D(u^r) &\sim \left( \eigbox{2^{f-1}}{1}, \eigbox{2^{f-1}}{-1} \right), \\
  D(u^2) &\sim \left( 1, 1, \eigbox{3}{\zeta_r,\zeta_r^2,...,\zeta_r^{r-1}} \right)
 \end{align*}
 and
 \begin{align*}
 D(u^s) &\sim \left( \eigbox{2^{f-1}}{1}, \eigbox{2^{f-1}}{-1} \right), \\
 D(u^2) &\sim \left( 1, 1, \zeta_s,\zeta_s^2,...,\zeta_s^{s-1} \right)
\end{align*}
respectively. In any case the trace of $D(u)$ can not be integral, contradicting the fact that the Steinberg character only takes integral values.

Assume now that $G = \PGL(2,3^f)$. By \cref{4primaryProposition} either $f =5$ or the prime divisors of $|G|$ are $2$, $3$, $r = \frac{3^f + 1}{4}$ and $s = \frac{3^f - 1}{2}$. Note that in case $f = 5$ the number $\frac{3^f + 1}{4}$ is also prime, while we do not need to consider elements of order $3 \cdot 11$. Then by \cref{PGL2} it suffices in any case to consider elements of order $3r$ and $3s$. Suppose that $u \in \V(\ZZ G)$ has order $3r$ or $3s$. Then
\begin{align*}
D(u^r) &\sim \left(\eigbox{3^{f-1}}{1}, \eigbox{3^{f-1}}{\zeta_3,\zeta_3^2}\right), \\
D(u^3) &\sim \left(1, 1, 1, \eigbox{4}{\zeta_r,\zeta_r^2,...,\zeta_r^{r-1}}\right)
\end{align*}
and
\begin{align*}
D(u^s) &\sim \left(\eigbox{3^{f-1}}{1}, \eigbox{3^{f-1}}{\zeta_3,\zeta_3^2}\right), \\
D(u^3) &\sim \left(1, 1, 1, \eigbox{2}{\zeta_s,\zeta_s^2,...,\zeta_s^{s-1}}\right)
\end{align*}
respectively. Again, the sum of the eigenvalues of $D(u)$ can not be integral.
\end{proof}

\begin{remark}\label{NegativPSL2} There are $\frac{2^{f-2} + 1}{3}$ possible partial augmentations for normalized torsion units of order $6$ in $\V(\ZZ \PSL(2,2^f))$ and $f \geq 5$, and $3^{f-2} + 1$ possible partial augmentations for normalized torsion units of order $6$ in $\V(\ZZ \PGL(2,3^f))$ and $f \geq 5$. 

Assume $G = \PSL(2,2^f)$, $f \geq 5$ and $u \in \V(\ZZ G)$ is of order $6$. Note that $f$ is odd by \cref{4primaryProposition}. The maximal information we can obtain on the partial augmentations of $u$ using the HeLP method is provided by the characters given in \cref{PSL22f} since any other character of $G$ on the classes $\cc{1a}$, $\cc{2a}$ and $\cc{3a}$ can be written as a linear combination with non-negative integers of the trivial character and the given ones, cf.\ the character table of $G$ in \cite[II]{Jordan}. Moreover any irreducible Brauer character in characteristic $p \neq 2$ is liftable by \cref{Gruppentheorie}\eqref{PSL_characters}.

\begin{table}[ht] \caption{Part of the character table of $\PSL(2,2^f)$.}\label{PSL22f}
\begin{tabular}{lccc}\toprule
{ } & $\cc{1a}$ & $\cc{2a}$ & $\cc{3a}$ \\ \midrule
$\chi$ & $2^{f} - 1$ & $-1$ & $1$  \\
$\theta$ & $2^{f} - 1$ & $-1$ & $-2$  \\
\bottomrule
\end{tabular} 
\end{table}
Let $D$ be a representation of $G$ corresponding to the character $\chi$. Denote by $\zeta$ a primitive 3rd root of unity. Then
\[D(u^3) \sim \left(\eigbox{2^{f-1} - 1}{1}, \eigbox{2^{f-1}}{-1}\right) \ \ {\text{and}} \ \ D(u^2) \sim \left(\eigbox{\frac{2^f + 1}{3}}{1}, \eigbox{\frac{2^f - 2}{3}}{\zeta, \zeta^2}  \right). \]
Since the number of $-1$ in $D(u^3)$ is smaller than the number of $\zeta$ and $\zeta^2$ in $D(u^2)$, $\chi(u)$ attains its maximal value when
\[D(u) \sim \left(\eigbox{2^{f-2}}{-\zeta, -\zeta^2}, \eigbox{\left(\frac{2^f - 2}{3} - 2^{f-2}\right)}{\zeta, \zeta^2}, \eigbox{\frac{2^f + 1}{3}}{1}  \right). \]
Thus using $\varepsilon_{\cc{2a}}(u) + \varepsilon_{\cc{3a}}(u) = 1$ we obtain
\[-\varepsilon_{\cc{2a}}(u) + \varepsilon_{\cc{3a}} (u) = -1 + 2\varepsilon_{\cc{3a}}(u) = \chi(u) \leq 2^{f-2} - \left(\frac{2^f - 2}{3} - 2^{f-2}\right) + \frac{2^f + 1}{3} = 2^{f - 1} + 1  \]
and hence $\varepsilon_{\cc{3a}}(u) \leq 2^{f-2} + 1$.\\
To minimize the value of $\chi(u)$ we have
\[D(u) \sim \left(1, \eigbox{\frac{2^f - 2}{3}}{-1}, \eigbox{ \frac{2^{f - 2} + 1}{3} }{-\zeta, -\zeta^2}, \eigbox{ (2^{f - 2} - 1)}{\zeta, \zeta^2}  \right). \]
Thus
\[ -1 + 2\varepsilon_{\cc{3a}}(u) = \chi(u) \geq 1 - \frac{2^f - 2}{3} +  \frac{2^{f - 2} + 1}{3} - \left(2^{f - 2} - 1\right) = -2^{f - 1} + 3  \]
and hence $\varepsilon_{\cc{3a}}(u) \geq -2^{f-2} +2$. Since $\varepsilon_{\cc{3a}}(u) \equiv 3 \mod 6$ by \cref{Wagner} this may be changed to $\varepsilon_{\cc{3a}}(u) \geq -2^{f-2} + 5$. In between $-2^{f - 2} + 5$ and  $2^{f - 2} + 1$ there are exactly $\frac{2^{f-2} + 1}{3}$ integers being congruent $3$ modulo $6$ and these are all legitimate possibilities for $\varepsilon_{\cc{3a}}(u)$. Doing the same calculations with the character $\theta$ does not give stronger bounds for $\varepsilon_{\cc{3a}}(u)$.\\
Now let $G = \PGL(2,3^f)$ with $f$ odd and let $u \in \V( \ZZ G)$ of order 6. Denote by $\cc{3a}$ the conjugacy class of elements of order $3$ in $G$ and by $\cc{2a}$ and $\cc{2b}$ the conjugacy classes of involutions such that $\cc{2a}$ is the class in $\PSL(2,3^f)$. By the character table of $G$ given in \cite[III]{Jordan} the maximal information for the HeLP method can be extracted from the ordinary characters given in \cref{PGL23f}, since any irreducible Brauer character in characteristic $p \neq 3$ is liftable by \cref{Gruppentheorie}\eqref{PSL_characters}.

\begin{table}[ht] \caption{Part of the character table of $\PGL(2,3^f)$.}\label{PGL23f}
\begin{tabular}{lcccc}\toprule
{ } & $\cc{1a}$ & $\cc{2a}$ & $\cc{3a}$ & $\cc{2b}$ \\ \hline \\[-2ex]
$\tau$ & $1$ & $1$ & $1$ & $-1$ \\ 
$\chi$ & $3^{f} - 1$ & $2$ & $-1$ & $0$  \\
$\theta$ & $3^{f} - 1$ & $-2$ & $-1$ & $0$  \\
\bottomrule
\end{tabular} 
\end{table}
Using $\tau$ and \cref{Lemmaxy} any involution in $\V( \ZZ G)$ is rationally conjugate to an element of $G$. Assume first that $u^3 \sim \cc{2b}$. Then using $\tau$ and the fact that $u$ is normalized we get $\varepsilon_{\cc{2b}}(u) = 1$ and $\varepsilon_{\cc{2a}}(u) = -\varepsilon_{\cc{3a}}(u)$. Arguing as in the case of $\PSL(2,2^f)$ we get
\[ 2\varepsilon_{\cc{2a}}(u)  - \varepsilon_{\cc{3a}}(u) = -3\varepsilon_{\cc{3a}}(u) = \chi(u) \leq \frac{3^f-9}{2} \] 
and thus $\varepsilon_{\cc{3a}}(u) \geq -\frac{3^{f -1}  - 3}{2}$. On the other hand
\[ 2\varepsilon_{\cc{2a}}(u)  - \varepsilon_{\cc{3a}}(u) = -3\varepsilon_{\cc{3a}}(u) = \chi(u) \geq -\frac{3^f + 9}{2}, \] 
hence $\varepsilon_{\cc{3a}}(u) \leq \frac{3^{f - 1} - 3}{2}$. Using \cref{Wagner}, there are $\frac{3^{f - 2} + 1}{2}$ possibilities for $\varepsilon_{\cc{3a}}(u)$ and this fixes also $\varepsilon_{\cc{2a}}(u)$. The bounds that can be obtained using $\theta$ do not give stronger restrictions on the partial augmentations of $u$.

Now assume $u^3$ is rationally conjugate to elements in $\cc{2a}$. Then using $\tau$ we have $\varepsilon_{\cc{2b}}(u) = 0$ and hence $\varepsilon_{\cc{2a}}(u) = 1 - \varepsilon_{\cc{3a}}(u)$. Via $\chi$ we get
\[ 2\varepsilon_{\cc{2a}}(u) - \varepsilon_{\cc{3a}}(u) = 2 - 3\varepsilon_{\cc{3a}}(u) = \chi(u) \leq \frac{3^f - 5}{2} \] 
and 
\[ 2\varepsilon_{\cc{2a}}(u) - \varepsilon_{\cc{3a}}(u) = 2 - 3\varepsilon_{\cc{3a}}(u) = \chi(u) \geq -\frac{3^f -1}{2} \] 
implying, using \cref{Wagner}, that $-\frac{3^{f - 1} - 3}{2} \leq \varepsilon_{\cc{3a}}(u) \leq \frac{3^{f - 1} -3}{2}$. This gives another $\frac{3^{f - 2} + 1}{2}$ possible partial augmentations for $u$.

Let $G = \PSL(2, 3^f)$ for odd $f = 2e + 1 \geq 3$. As the number of admissible possibilities of partial augmentations for normalized units of order $6$ in $\V(\ZZ G)$ depends on the number of possibilities for units of order $3$ we will show that the HeLP method does not suffice to show that there is no unit of order $6$ in $\V(\ZZ G)$ by verifying that a unit $u \in \V(\ZZ G)$ with the partial augmentations \[(\varepsilon_{\cc{2a}}(u^3), \varepsilon_{\cc{3a}}(u^2), \varepsilon_{\cc{3b}}(u^2), \varepsilon_{\cc{2a}}(u), \varepsilon_{\cc{3a}}(u), \varepsilon_{\cc{3b}}(u)) = (1, 1, 0, -2, 2, 1)\] can not be ruled out. As such a unit would have admissible partial augmentations in $\ZZ \PGL(2, 3^f)$, new obstructions can only arise from irreducible characters of $G$ that stay irreducible after being induced to $\PGL(2, 3^f)$. These characters are listed in \cref{PSL23f}.

\begin{table}[ht] \caption{Part of the character table of $\PSL(2,3^f)$, $f = 2e + 1$, $\zeta$ a primitive $3$rd root of unity.}\label{PSL23f}
\begin{tabular}{lcccc}\toprule
{ } & $\cc{1a}$ & $\cc{2a}$ & $\cc{3a}$ & $\cc{3b}$ \\ \hline \\[-2ex]
$\eta$ & $\frac{3^{f} - 1}{2}$ & $1$ & $\frac{3^e - 1}{2} + 3^e\zeta$ & $\frac{3^e - 1}{2} + 3^e\zeta^2$  \\
$\eta'$ & $\frac{3^{f} - 1}{2}$ & $1$ & $\frac{3^e - 1}{2} + 3^e\zeta^2$ & $\frac{3^e - 1}{2} + 3^e\zeta$  \\
\bottomrule
\end{tabular} 
\end{table}

Let $D$ be a representation affording $\eta$ and let $\zeta$ be a fixed primitive $3$rd root of unity. Assume that $D(u)$ has the following eigenvalues for some normalized unit $u$
\begin{align*} D(u) \sim  \left(\eigbox{\frac{3^{2e} - 5}{4}}{1}, \eigbox{\frac{3^{2e} + 3}{4}}{-1}, \eigbox{\frac{3^{2e} + 3}{4}}{\zeta, \zeta^2}, \eigbox{\frac{3^{2e} - 2\cdot 3^e - 3}{4}}{-\zeta}, \eigbox{\frac{3^{2e} + 2\cdot 3^e - 3}{4}}{-\zeta^2} \right). \end{align*} Then this $u$ fulfills $\eta(u) = \frac{3^e - 7}{2} + 3^e\zeta = -2\eta(\cc{2a}) + 2\eta(\cc{3a}) + \eta(\cc{3b})$ and \begin{align*} D(u^3) \sim D(\cc{2a}) & \sim  \left(\eigbox{\frac{3^f + 1}{4}}{1}, \eigbox{\frac{3^f - 3}{4}}{-1} \right), \\ D(u^2) \sim D(\cc{3a}) & \sim  \left(\eigbox{\frac{3^{2e}-1}{2}}{1}, \eigbox{\frac{3^{e}(3^{e} + 1)}{2}}{\zeta}, \eigbox{\frac{3^{e}(3^{e} - 1)}{2}}{\zeta^2} \right). \end{align*} So all obstructions that can be derived from the HeLP method for this character are fulfilled. We can take the images under the non-trivial element of $\Gal(\QQ(\zeta)/\QQ)$ as the eigenvalues of the image of $u$ under a representation affording $\eta'$, so this character can also not eliminate this possibility.
\end{remark}

\begin{lemma}\label{PSL281}
Let $G$ be an almost simple group with socle $S = \PSL(2,81)$. Then units of order $5$ in $\V( \ZZ G)$ are rationally conjugate to elements of $G$. If $u \in \V(\ZZ G)$ is of order $15$, then the restrictions on the partial augmentations of $u$ using the HeLP method for Brauer tables of $G$ are the same as when using the ordinary character table of $G$. 
\end{lemma}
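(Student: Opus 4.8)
The plan is to treat the two assertions separately. Throughout I would use that $5\nmid|\operatorname{Out}(S)|=8$, so that every element of order $5$ of $G$ lies in $S$, its image in the $2$-group $G/S\le\operatorname{Out}(S)\cong C_2\times C_4$ being trivial. By \cref{Gruppentheorie}\eqref{PSL_orders} the order-$5$ elements of $S$ fall into two classes $\cc{5a}$ and $\cc{5b}$ interchanged by squaring, and the Frobenius $\sigma$ acts on them as $g\mapsto g^3$; hence an odd power of $\sigma$ fuses $\cc{5a}$ and $\cc{5b}$, while the diagonal automorphism $g$ and $\sigma^2$ (acting as $g\mapsto g^{-1}$) fix both classes. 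If $G$ induces an odd power of $\sigma$, there is a single class of order-$5$ elements and a unit of order $5$ is rationally conjugate to a group element by \cite[Theorem~2.5]{MRSW}. Otherwise $G\le\langle S,g,\sigma^2\rangle=\PGL(2,81).\langle\sigma^2\rangle$, which contains $\PGL(2,81)=\langle S,g\rangle$ with index $2$; I would induce the $3$-modular degree-$3$ Brauer character $\varphi$ of $\PGL(2,81)$ from \cref{Gruppentheorie}\eqref{PSL_characters} to this group and restrict it to $G$, obtaining a Brauer character $\psi$ of degree $6$. As $\sigma^2$ inverts order-$5$ elements, $\psi(\cc{5a})=2(1+\zeta+\zeta^{-1})$ and $\psi(\cc{5b})=2(1+\zeta^2+\zeta^{-2})$ for a primitive fifth root of unity $\zeta$, and \eqref{HeLP-restrictions} gives $\mu(\zeta,u,\psi)=2\varepsilon_{\cc{5a}}(u)$ and $\mu(\zeta^2,u,\psi)=2\varepsilon_{\cc{5b}}(u)$. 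Non-negativity of these two multiplicities, together with $\varepsilon_{\cc{5a}}(u)+\varepsilon_{\cc{5b}}(u)=1$ and integrality of partial augmentations, forces one of them to be $1$ and the other $0$ (this also reproves \cref{HertweckPSL}(1) for $S$ itself).

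For a unit $u$ of order $15$, a $p$-Brauer table can enter the HeLP constraints only when $p\mid|G|$ and $p\nmid15$, i.e.\ for $p\in\{2,41\}$. For $p=41$ all Brauer characters of $S$ are liftable by \cite{Burkhardt}, and the induction argument of \cref{Gruppentheorie}\eqref{PSL_characters} carries this to $G$; since $81\equiv1\bmod4$, the prime $2$ alone carries the content. For $p=2$ I would read off Burkhardt's $2$-decomposition matrix of $S$, whose $2$-regular classes have orders $1,3,5,41$. The twenty ordinary characters of degree $80$ have defect $0$, and the degree-$82$ characters with $2$-singular split-torus parameter lying outside the principal block sit in blocks whose decomposition matrix is a single column; all of these are liftable. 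The principal block has dihedral defect group $D_{16}$ and exactly three irreducible Brauer characters, and the upshot I would establish is that the only non-liftable Brauer characters are $\chi'-\mathbf{1}$ and $\chi''-\mathbf{1}$, where $\chi',\chi''$ are the two ordinary characters of degree $(q+1)/2=41$ (compatibly with $\chi'+\chi''=\mathbf{1}+\operatorname{St}$ on the $2$-regular classes). As each is a difference of ordinary characters, linearity of \eqref{HeLP-restrictions} shows that for every root of unity $\eta\ne1$ the inequality coming from $\chi'-\mathbf{1}$ coincides with the ordinary inequality $\mu(\eta,u,\chi')\ge0$, while for $\eta=1$ it becomes the single extra condition $\mu(1,u,\chi')\ge1$, and likewise for $\chi''$.

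The decisive step, which I expect to be the main obstacle, is then to show that these two conditions are already implied by the ordinary constraints: using that $u^3$ is rationally conjugate to an element of order $5$ by the first part, that $\chi',\chi''$ are rational valued because $\sqrt{81}=9$, and the congruences of \cref{Wagner} applied to $u^5$, one evaluates $\mu(1,u,\chi')$ and $\mu(1,u,\chi'')$ and checks that neither can be $0$ on the ordinary solution set. Finally I would transport the statement from $S$ to an arbitrary almost simple $G$ by Clifford theory: the non-liftable $2$-Brauer characters of $G$ lying over $\chi'-\mathbf{1}$ and $\chi''-\mathbf{1}$ remain integral combinations of ordinary characters of $G$ (the diagonal automorphism may interchange $\chi'$ and $\chi''$, and $\sigma$ permutes the principal series), so the same cancellation holds and no Brauer character yields a restriction beyond those of the ordinary table.
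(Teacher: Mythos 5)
Your first half (units of order $5$) is correct and is essentially the paper's own argument: when $G/S$ contains an element of order $4$ the two $S$-classes fuse and the claim is immediate, and otherwise one feeds the $3$-modular character $\varphi$ of \cref{Gruppentheorie}\eqref{PSL_characters}, or its induced/restricted version of degree $6$, into the HeLP constraints exactly as in the proof of \cref{PGL2}; your computation $\mu(\zeta,u,\psi)=2\varepsilon_{\cc{5a}}(u)$, $\mu(\zeta^2,u,\psi)=2\varepsilon_{\cc{5b}}(u)$ is right.

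The second half, however, has two genuine gaps. For $p=41$ you claim that ``the induction argument of \cref{Gruppentheorie}\eqref{PSL_characters} carries [liftability] to $G$''. That argument is specific to $G=\PGL(2,q)$: it uses that exactly two ordinary characters of $S$ induce irreducibly to the index-$2$ diagonal extension and that these are separated by the unipotent classes. It gives nothing for extensions involving the Frobenius $\sigma$ (which has order $4$ here) nor for the groups with $G/S$ non-cyclic, and this is exactly where the real work of the lemma lies. The paper does not prove liftability for those groups at all; instead it restricts a simple $41$-modular $kG$-module to $S$, observes (comparing with the ordinary table of $G$) that only a $41$-dimensional simple $kG$-module could yield a constraint beyond the ordinary characters, and then settles existence of such a module case by case: $S.4a=\langle S,\sigma\rangle$ has one, but there the ordinary table already excludes units of order $15$ (cf.\ \cref{HeLPTable}), while the remaining groups containing elements of order $4$ over $S$ have none, because the diagonal automorphism interchanges the two $41$-dimensional simple $kS$-modules (equivalently, $\PGL(2,81)$ has no simple $41$-dimensional $41$-modular module); the groups with $G/S$ elementary abelian are checked against the tables in the \textsf{GAP} library. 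You would need an argument of this type; your one-sentence reduction is not valid as stated. For $p=2$, your block-theoretic identification of the non-liftable Brauer characters of $S$ as $\chi'-\mathbf{1}$ and $\chi''-\mathbf{1}$, and the reformulation of their HeLP constraints as the two extra inequalities $\mu(1,u,\chi')\geq 1$ and $\mu(1,u,\chi'')\geq 1$, are fine, but the step you yourself call decisive -- that these inequalities are implied by the ordinary constraints, for $S$ and for all seven proper extensions -- is precisely the content of the lemma for $p=2$, and you leave it at ``one evaluates \dots{} and checks'', with only a sketchy Clifford-theoretic remark for the larger groups. The paper avoids this entirely because all $2$-modular Brauer tables of the groups in question are available in the \textsf{GAP} character table library, so the comparison is a finite computation. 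As written, your proposal establishes the first assertion of the lemma but not the second.
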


\begin{proof}
Let the outer automorphism group of $S$ be generated by $g$ and $\sigma$, where $\sigma$ is the automorphism induced by the Frobenius automorphism of $\mathbb{F}_{81}$ and $g$ is an involution in $\PGL(2,81)$ outside of $S$, see \cref{Gruppentheorie}\eqref{PSL_autos}. There are two conjugacy classes of elements of order $5$ in $S$ and these fuse into one class in $G$, if and only if $G/S$ contains elements of order $4$. If there are two classes of elements of order $5$ using the $3$-modular Brauer character called $\varphi$ in \cref{Gruppentheorie}\eqref{PSL_characters}, if defined for $G$, or the character induced by $\varphi$ and performing the same computations as in the proof of \cref{PGL2}, we obtain that units of order $5$ in $\V(\ZZ G)$ are rationally conjugate to elements of $G$.

All $2$-modular Brauer tables are available in \cite{CTblLib}, so we have to show that no new information can be deduced for the partial augmentations of units of $15$ using the $41$-modular Brauer characters of $G$ compared to the ordinary character table. If $G/S$ is elementary abelian, all Brauer tables of $G$ in characteristic $41$ are available in \cite{CTblLib} and the result follows. So let $G/S$ contain elements of order $4$. Let $k$ be a field of characteristic $41$ large enough to admit all $41$-modular representation of $G$. Let $M$ be a simple $kG$-module, then $M$ is also a $kS$-module. Looking at the Brauer table of $S$ and comparing it with the ordinary table of $G$, we obtain that the character associated to $M$ could only give more information, if $M$ would be $41$-dimensional. If $G/S$ is generated by $\sigma$, then $G$ has a $41$-dimensional simple module, since there is even a $41$-dimensional ordinary representation of $G$ and in this case the ordinary characters suffice to show that there are no elements of order $15$ in $\V(\ZZ G)$, cf.\ \cref{HeLPTable}. However $\PGL(2,81)$ does not possess a simple $41$-dimensional $41$-modular module and thus $G$ does not, if $G/S$ is not generated by $\sigma$. 
\end{proof}

For certain groups where not all characters we need are directly accesible in \textsf{GAP} we give expanded arguments.
\begin{lemma}\label{PSL2243}
Let $G = \PSL(2,243)$. There are no units of order $33$ in $\V(\ZZ G)$.
\end{lemma}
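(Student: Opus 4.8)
The plan is to build, by hand, the handful of ordinary characters of $G=\PSL(2,243)$ that are not in the GAP library and run the HeLP constraints of \eqref{HeLP-restrictions} on them. Write $q=243=3^5$, so that $q-1=2\cdot 11^2$, $q+1=2^2\cdot 61$, $\pi(G)=\{2,3,11,61\}$, the elements of order $11$ lie in a cyclic torus of order $(q-1)/2=121$, and $G$ has no element of order $33$. Suppose $u\in\V(\ZZ G)$ has order $33$; then $u^3$ has order $11$ and $u^{11}$ has order $3$. Since $q\equiv 3\bmod 4$ there are two classes $\cc{3a},\cc{3b}$ of elements of order $3$, interchanged by inversion. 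Put $a=\varepsilon_{\cc{3a}}(u^{11})$, $b=\varepsilon_{\cc{3b}}(u^{11})$ (so $a+b=1$), and $x_a=\varepsilon_{\cc{3a}}(u)$, $x_b=\varepsilon_{\cc{3b}}(u)$, $S=\tilde{\varepsilon}_3(u)=x_a+x_b$. By \cref{Gruppentheorie}\eqref{PSL_characters} the $11$‑ and $61$‑Brauer characters are liftable, and so are the $2$‑Brauer characters because $q\equiv-1\bmod 4$; as the defining prime $3$ divides $33$ it is unavailable. Hence only ordinary characters are relevant, and the two cuspidal characters $\eta,\eta'$ of degree $(q-1)/2=121$ from \cref{PSL23f} (with $\eta(\cc{3a})=4+9\zeta_3$, $\eta(\cc{3b})=4+9\zeta_3^2$, $\zeta_3$ a primitive cube root, and $\eta$ vanishing on all elements of order $11$) are the ones distinguishing the two unipotent classes.

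First I would pin $S$ down using the rational character $\theta=\eta+\eta'$ of degree $q-1=242$, which vanishes on order $11$ and equals $-1$ on both $\cc{3a}$ and $\cc{3b}$ (as $\zeta_3+\zeta_3^2=-1$). Since $\theta(u^3)=0$ and $\theta(u^{11})=-1$, evaluating \eqref{HeLP-restrictions} at the eigenvalues $1$ and $\zeta_3$ gives
\[ \mu(1,u,\theta)=\frac{20(12-S)}{33},\qquad \mu(\zeta_3,u,\theta)=\frac{243+10S}{33}. \]
Non‑negativity forces $-24\le S\le 12$, while integrality of $\mu(1,u,\theta)$ forces $S\equiv 12\bmod 33$ (the same congruence also drops out of \cref{Wagner}). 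Hence $S\in\{12,-21\}$.

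The decisive step is to feed the individual cuspidal character $\eta$ into the eigenvalue bookkeeping over $\QQ(\zeta_3)$. Let $D$ afford $\eta$ and write $A_s,B_s$ ($s\in\{0,1,2\}$) for the multiplicity of $\zeta_3^s$, respectively of each $\zeta_3^s\zeta_{11}^t$ with $t\ne 0$, as an eigenvalue of $D(u)$; these are constant in $t\ne 0$ since $\eta(u)\in\QQ(\zeta_3)$. Because $\eta$ vanishes on order $11$, the matrix $D(u^3)$ has every $11$th root of unity with multiplicity $11$, giving $\sum_s A_s=\sum_s B_s=11$; the matrix $D(u^{11})$ is computed directly from $a,b$ via \cref{PSL23f}; and $\eta(u)=4S+9x_a\zeta_3+9x_b\zeta_3^2$ yields relations $A_s-B_s=d_s+\lambda$ with $(d_0,d_1,d_2)=(4S,9x_a,9x_b)$ for a single integer $\lambda$. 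Solving this linear system gives $\lambda=-13S/3$ (consistent with $3\mid S$) and expresses all $A_s,B_s$ through $S,a,x_a,x_b$; then I would impose $A_s,B_s\in\ZZ_{\ge 0}$. For $S=12$ one gets $B_0=4$, so $B_1+B_2=7$, together with $11B_1=97-9(a+x_a)$ and $a+x_a\equiv 1\bmod 11$, which has no solution with $0\le B_1\le 7$. For $S=-21$ the only value surviving the $B_1,B_2$ constraints is $a+x_a=-10$, and then $A_1=5-9a\ge 0$ and $A_2=-4+9a\ge 0$ cannot hold simultaneously. In both cases the multiplicities are infeasible, so $u$ does not exist.

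The main obstacle is exactly what separates $\PSL(2,243)$ from $\PGL(2,243)$, where $33$ is a genuine HeLP exception: because $121=11^2$ is not prime the relevant torus element has order $11$ rather than $121$, and the rational characters (Steinberg and $\theta$) only confine $S$ to the two values $\{12,-21\}$. The contradiction has to come from the complex cuspidal character $\eta$, and the technical heart is managing its eigenvalue multiplicities over $\QQ(\zeta_3)$ — effectively a transportation problem whose row and column marginals are fixed by $D(u^{11})$ and $D(u^3)$ — and checking that integrality together with non‑negativity leaves no admissible filling. This is precisely the computation the HeLP package would carry out automatically were the character table of $\PSL(2,243)$ available in GAP.
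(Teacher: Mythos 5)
Your proposal is correct and takes essentially the same route as the paper: both arguments hinge on the degree-$121$ cuspidal character (your $\eta$, the paper's $\chi$, with values $\tfrac{-1\pm\sqrt{-243}}{2}$ on the two order-$3$ classes and vanishing on all order-$11$ classes), exploiting that its vanishing on the order-$11$ classes collapses the five unknown partial augmentations there into the single quantity $\tilde{\varepsilon}_3(u)$. The paper feeds this character into the \texttt{HeLP\_WithGivenOrderSConstant} routine of the HeLP package, whereas you carry out the same integrality-and-nonnegativity bookkeeping of eigenvalue multiplicities by hand (your case analysis $S\in\{12,-21\}$ checks out); the mathematical content is identical.
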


\begin{proof}
The character table of $G$ is not contained in \cite{CTblLib}. $G$ contains exactly five conjugacy classes of elements of order $11$ and two conjugacy classes of elements of order $3$. By \cite{Schur} $G$ possesses a character as given in \cref{PSL2243Char}.

\begin{table}[ht] \caption{Part of the character table of $\PSL(2,243)$.}\label{PSL2243Char}
\begin{tabular}{lcccccccc}\toprule
{ } & $\cc{1a}$ & $\cc{3a}$ & $\cc{3b}$ & $\cc{11a}$ & $\cc{11b}$ & $\cc{11c}$ & $\cc{11d}$ & $\cc{11e}$ \\ \midrule
$\chi$ & $121$ & $\frac{-1+\sqrt{-243}}{2}$ & $\frac{-1-\sqrt{-243}}{2}$ & $0$ & $0$ & $0$ & $0$ & $0$  \\
\bottomrule
\end{tabular} 
\end{table}

This character may be used to show that there are no units of order $33$ in $\V(\ZZ G)$
via the package \cite{HeLPPackage} and the following code.

\begin{verbatim}
gap> H := PSL(2,243);; C := CharacterTable(H);;
gap> h := Size(ConjugacyClasses(C));
124
gap> o3 := Positions(OrdersClassRepresentatives(C), 3);
[ 2, 3 ]
gap> chi := ListWithIdenticalEntries(h, 0);;
gap> chi[1] := 121;;
gap> chi[o3[1]] := (-1 + Sqrt(-243))/2;; chi[o3[2]] := (-1 - Sqrt(-243))/2;;
gap> chi := ClassFunction(C, chi);;
gap> HeLP_WithGivenOrderSConstant([chi], 11, 3);;
#I  Number of solutions for elements of order 33: 0; stored in HeLP_sol[33].
\end{verbatim}
\end{proof}

\begin{remark}\label{NegativPGL2243}
Let $G = \PGL(2, 243)$. We will show that the HeLP method is not sufficient to prove that there are no units if order $33$ in $\V(\ZZ G)$.

There are five conjugacy classes of elements of order $11$ in $G$ and one class of elements of order $3$. Looking at the character table of $G$ given in \cite{Schur} we see that the maximal possible information using the ordinary character table and the HeLP method can be obtained from the characters given in \cref{PGL2243}.

\begin{table}[ht] \caption{Part of the character table of $\PGL(2,243)$, where $z$ denotes some fixed $11$-th root of unity.}\label{PGL2243}
\begin{tabular}{lccccccc}\toprule
{ } & $\cc{1a}$ & $\cc{3a}$ & $\cc{11a}$ & $\cc{11b}$ & $\cc{11c}$ & $\cc{11d}$ & $\cc{11e}$ \\ \midrule
$\chi$ & $242$ & $-1$ & $0$ & $0$ & $0$ & $0$ & $0$  \\
$\psi_1$ & $244$ & $1$ & $z + z^{-1}$ & $z^2 + z^{-2}$ & $z^4 + z^{-4}$ & $z^3 + z^{-3}$ & $z^5 + z^{-5}$  \\
$\psi_2$ & $244$ & $1$ & $z^2 + z^{-2}$ & $z^4 + z^{-4}$ & $z^3 + z^{-3}$ & $z^5 + z^{-5}$ & $z + z^{-1}$  \\
$\psi_3$ & $244$ & $1$ & $z^4 + z^{-4}$ & $z^3 + z^{-3}$ & $z^5 + z^{-5}$ & $z + z^{-1}$ & $z^2 + z^{-2}$  \\
$\psi_4$ & $244$ & $1$ & $z^3 + z^{-3}$ & $z^5 + z^{-5}$ & $z + z^{-1}$ & $z^2 + z^{-2}$ & $z^4 + z^{-4}$  \\
$\psi_5$ & $244$ & $1$ & $z^5 + z^{-5}$ & $z + z^{-1}$ & $z^2 + z^{-2}$ & $z^4 + z^{-4}$ & $z^3 + z^{-3}$  \\
\bottomrule
\end{tabular} 
\end{table}

Assuming that $u^3$ is rationally conjugate to elements in $\cc{11a}$ the partial augmentations $\varepsilon_{\cc{3a}}(u) = 12$, $\varepsilon_{\cc{11c}}(u) = -11$ and $\varepsilon_C(u) = 0$ for any other class $C$ of elements of order $11$ satisfy all HeLP constraints imposed by the characters in \cref{PGL2243}.
This can be seen with the HeLP package \cite{HeLPPackage} via the following \textsf{GAP} code:  

{\footnotesize \begin{verbatim}
gap> G := PGL(2,243);; C := CharacterTable(G);;
gap> h := Size(ConjugacyClasses(C));
245
gap> o := OrdersClassRepresentatives(C);; o3 := Positions(o, 3); o11 := Positions(o, 11);
[ 245 ]
[ 2, 3, 4, 5, 6 ]
gap> chi := ListWithIdenticalEntries(h, 0);;
gap> chi[1] := 242;;
gap> chi[o3[1]] := -1;;
gap> chi := ClassFunction(C, chi);;
gap> y := E(11) + E(11)^-1;; K := Field(y);;
gap> S := GaloisGroup(K);; s := GeneratorsOfGroup(S)[1];;
gap> psi := [];;     # list of the characters psi_1, ..., psi_5
gap> for j in [1..5] do psi[j]:=ListWithIdenticalEntries(h, 0); psi[j][1]:=244; psi[j][o3[1]]:=1;
> for k in [1..Size(o11)] do psi[j][o11[k]] := y^(s^(j+k)); od;
> psi[j] := ClassFunction(C, psi[j]); od;

gap> HeLP_sol[33] := [ [ [1], [1, 0, 0, 0, 0], [12, 0, 0, -11, 0, 0] ] ];
[ [ [ 1 ], [ 1, 0, 0, 0, 0 ], [ 12, 0, 0, -11, 0, 0 ] ] ]
gap> HeLP_VerifySolution(Concatenation([chi], psi), 33);
[ [ [ 1 ], [ 1, 0, 0, 0, 0 ], [ 12, 0, 0, -11, 0, 0 ] ] ]
\end{verbatim}
}
Since every $2$-Brauer character and $61$-Brauer character is liftable by \cref{Gruppentheorie}\eqref{PSL_characters}, the ordinary table of $G$ provides the maximal information, and this finishes the proof. 
The number of possible partial augmentations for elements of order $33$ given in \cref{HeLPTable} was computed using the program Normaliz \cite{Normaliz}.
\end{remark}

\begin{lemma}\label{PSL317} Let $G = \Aut(\PSL(3,17))$ or $G = \PSL(3,17)$.  Let $p, q \in \pi(G)$ be different primes. Then there is an element of order $pq$ in $\V(\ZZ G)$ if and only if there is an element of that order in $G$ except possibly for $p \cdot q = 3\cdot 17$.
\end{lemma}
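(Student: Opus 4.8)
The plan is first to determine the prime graph of $G$ and thereby reduce the statement to a handful of HeLP computations. Since $\gcd(3,16)=1$, the socle is $S=\PSL(3,17)=\operatorname{SL}(3,17)$, and from $17^2-1=2^5\cdot 3^2$ and $17^2+17+1=307$ (a prime) one reads off $\pi(G)=\{2,3,17,307\}$. I would extract the element orders from the three classes of maximal tori together with the unipotent elements of order $17$: the split torus contributes $2$-power orders, the cyclic torus of order $17^2-1=288$ contributes in particular an element of order $6$, and the self-centralizing Coxeter torus of order $307$ contributes the elements of order $307$. Taking the commuting product of the involution $\operatorname{diag}(1,-1,-1)$ with a transvection supported on the last two coordinates exhibits an element of order $34$. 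Hence for the pairs $\{2,3\}$ and $\{2,17\}$ the group $G$ already contains an element of order $pq$ and there is nothing to prove.

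Next I would check that $G$ has no element of order $pq$ for the remaining pairs, so that a genuine argument is needed. An element of order $3$ is regular semisimple with eigenvalues $1,\omega,\omega^2$ for a primitive cube root of unity $\omega\in\FF_{17^2}$, so its centralizer is the torus of order $288$, which has no element of order $17$; this excludes order $51$. An element of order $307$ lies in the Coxeter torus, whose centralizer is the cyclic group of order $307$, so it commutes with no element of order $2$, $3$ or $17$; this excludes the orders $2\cdot 307$, $3\cdot 307$ and $17\cdot 307$. For $G=\Aut(S)=S.2$ --- the extension by the graph automorphism, the field and diagonal parts being trivial for $f=1$ --- the same conclusions hold, since every element of odd order lies in $S$ and the graph automorphism inverts the Coxeter torus, so no outer element centralizes an element of order $307$. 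It therefore remains to rule out units of order $2\cdot 307$, $3\cdot 307$ and $17\cdot 307$ in $\V(\ZZ G)$.

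For these three orders I would run the HeLP method exactly as in the proofs of \cref{PGL2} and \cref{PSL_2_p2}. As the tables of $\PSL(3,17)$ are not available in the \textsf{GAP} libraries, the characters must be supplied by hand from the generic character table of $\operatorname{SL}(3,q)$: the Steinberg character, which vanishes on nontrivial unipotent elements and is $\pm 1$ on the Coxeter torus, and the discrete-series characters attached to the Coxeter torus, whose values on elements of order $307$ are sums of $307$-th roots of unity (for $S.2$ one uses the characters obtained from these by extension or induction, and only ordinary characters once the unit order is divisible by the defining prime $17$). Using that, in $S$, units of order $2$, $3$ and $307$ are rationally conjugate to group elements --- the first two because $S$ has a single conjugacy class of elements of each of these orders, the last by a $307$-th root of unity argument as in \cref{PGL2} --- one knows the eigenvalues of the relevant powers of a hypothetical unit $u$; for the extension one instead splits into cases over the involution classes, as in the proof of \cref{PSL_2_p2}. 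Substituting the characters into \eqref{HeLP-restrictions} and invoking the congruences of \cref{Wagner} then forces an eigenvalue multiplicity to be negative, so no such $u$ exists.

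The main obstacle is the bookkeeping forced by the prime $307$: there are $(307-1)/3=102$ conjugacy classes of elements of order $307$ in $S$, and in $S.2$ these fuse pairwise under the graph automorphism while a new class of outer involutions appears, so one must argue uniformly over all of them and record the discrete-series character values correctly --- which is exactly why the characters have to be built explicitly rather than obtained from a library call. Once these three orders are excluded, the only pair for which HeLP leaves admissible non-trivial tuples of partial augmentations is $\{3,17\}$, that is $pq=51$, which is precisely the exception recorded in the statement.
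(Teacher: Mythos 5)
Your reduction is sound: you correctly identify $\pi(G)=\{2,3,17,307\}$, that $G$ has elements of order $6$ and $34$ but none of order $51$, $2\cdot 307$, $3\cdot 307$ or $17\cdot 307$, and hence that the lemma amounts to excluding normalized units of the last three orders. This matches the shape of the paper's argument (which likewise handles involutions via \cref{Lemmaxy} applied to the sign character of $G/\PSL(3,17)$, and must build characters by hand since the tables are not in the \textsf{GAP} libraries). But the proposal has a genuine gap exactly where the lemma's content lies: for the three critical orders you never carry out the HeLP analysis. The sentence ``substituting the characters into \eqref{HeLP-restrictions} and invoking the congruences of \cref{Wagner} then forces an eigenvalue multiplicity to be negative'' is an assertion of the conclusion, not a derivation, and it is not even an accurate description of how the contradictions actually arise: in the paper's proof the order $2\cdot 307$ case ends in a parity contradiction with \cref{Wagner} (integrality of $\chi_{4912}(u)$ forces $\tilde{\varepsilon}_{307}(u)=0$, which is impossible modulo $2$), the order $3\cdot 307$ case ends in a non-integrality contradiction ($\chi_{306}(u)=-\tilde{\varepsilon}_{307}(u)\in\ZZ$ is incompatible with the forced eigenvalue pairing), and the order $17\cdot 307$ case is a counting obstruction ($288<306$ blocks of $307$-th roots of unity cannot be matched to produce a rational trace). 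None of these computations, nor any substitute for them, appears in your text.

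There is also a structural inefficiency that makes your route harder than necessary and leaves a second unproven claim. You propose to first establish rational conjugacy of units of order $307$ ``by a $307$-th root of unity argument as in \cref{PGL2}''; with $102$ conjugacy classes of elements of order $307$ (fusing in pairs in $\Aut(\PSL(3,17))$) and discrete-series values $-(\zeta^{i}+\zeta^{iq}+\zeta^{iq^2})$ varying from class to class, this is a nontrivial basis argument in $\ZZ[\zeta]$ that you would have to write out, analogous to the order-$q$ case inside \cref{PSL_2_p2}. The paper avoids this entirely by choosing characters that are \emph{constant} on all classes of elements of order $307$ ($\chi_{306}\equiv -1$ and $\chi_{4912}\equiv 0$ there, and similarly $\chi_{1}\equiv 1$): then the eigenvalue multiplicities of $D(u^d)$ for the order-$307$ powers are determined by \eqref{HeLP-restrictions} alone, independently of how the partial augmentations distribute over the $102$ classes, and no rational-conjugacy statement for order $307$ is needed. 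Your choice of the Steinberg and cuspidal characters forgoes this collapse, so even as a plan it is not clear it closes; to repair the proposal you should either switch to characters constant on the $307$-classes (as the paper does) and then actually compute the multiplicities, or supply both the rational-conjugacy argument for order $307$ and the subsequent explicit eigenvalue computations.
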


\begin{proof} Using \cite{Magma} or \cite{SimpsonFrame} we can calculate the characters of $G = \Aut(\PSL(3,17))$ given in \cref{chartab_L_3_17_2}.
\begin{table}[ht] \caption{Parts of the character table of $\Aut(\PSL(3, 17))$. The class $\cc{307x}$ stands for any class of elements of order $307$, dots indicate zeros, blank spots are entries not needed.}\label{chartab_L_3_17_2}
\begin{tabular}{cccccccc}\toprule & \cc{1a} & \cc{2a} & \cc{2b} & \cc{3a} & \cc{17a} & \cc{17b} & \cc{307x} \\ \midrule
$\chi_{1}$ & $1$ & $1$ & $-1$ & $1$ & $1$ & $1$ & $1$  \\ 
$\chi_{306}$ & $306$ & $18$ & $.$ & $.$ & $17$ & $.$ & $-1$  \\ 
$\chi_{4912}$ & $4912$ & $16$ & $16$ & $1$ & $-1$ & $-1$ & $.$ \\ 
$\chi_{9216}$ & $9216$ & $.$ & $.$ & $.$ & $-32$ & $2$ &  \\ 
\bottomrule
\end{tabular} 
\end{table}
Applying \cref{Lemmaxy} to the non-trivial linear character of $G/\PSL(3,17) \simeq C_2$ (corresponding to $\chi_{\cc{1}}$) we obtain that involutions in $\V(\ZZ G)$ are rationally conjugate to elements in $G$.
Let $u \in \V(\ZZ G)$. Assuming $u$ has order $2 \cdot 307$, $3 \cdot 307$ or $17 \cdot 307$ analogues calculations as in the proof of \cref{Serien} prove the non-existence of $u$.

Assume that $o(u) = 2\cdot 307$. Recall that $\tilde{\varepsilon}_{307}(u)$ denotes the sum of all partial augmentations of $u$ at classes of elements of order $307$. Using that $u$ is normalized, we have \[ \chi_{1}(u) = \varepsilon_{\cc{2a}}(u) - \varepsilon_{\cc{2b}}(u) +\tilde{\varepsilon}_{307}(u) = 1 - 2\varepsilon_{\cc{2b}}(u) \in \{ \pm 1\} \] and thus $\varepsilon_{\cc{2b}}(u) \in \{0, 1\}$. We have $\chi_{4912}(u^2) = 0$ and $\chi_{4912}(u^{307}) = 16$. Denote by $D_j$ a representation having character $\chi_j$. Then
\begin{align*}
D_{4912}(u^2) \sim \left(\eigbox{16}{1}, \eigbox{16}{\zeta, ..., \zeta^{306}} \right), \\
D_{4912}(u^{307}) \sim \left(\eigbox{2464}{1}, \eigbox{2448}{-1} \right)
\end{align*}
for a primitive $307$-th root of unity $\zeta$. Note that $\chi_{4912}(u) = 16(\varepsilon_{\cc{2a}}(u) + \varepsilon_{\cc{2b}}(u)) \in \ZZ$. Hence $\varepsilon_{\cc{2a}}(u) + \varepsilon_{\cc{2b}}(u) = 1$, implying $(\varepsilon_{\cc{2a}}(u), \varepsilon_{\cc{2b}}(u),\tilde{\varepsilon}_{307}(u)) \in \{(1, 0, 0), (0, 1, 0)\}$ contradicting \cref{Wagner}.

Now assume that $o(u) = 3\cdot 307$. From the character values we get
\begin{align*}
D_{306}(u^3) \sim \left(\eigbox{1}{\zeta, ..., \zeta^{306}} \right), \\
D_{306}(u^{307}) \sim \left(\eigbox{102}{1}, \eigbox{102}{\xi, \xi^2} \right)
\end{align*}
where $\zeta$ denotes a again a primitive $307$-th root of unity and $\xi$ a primitive third root of unity. As $\chi_{306}(u) = -\tilde{\varepsilon}_{307}(u) \in \ZZ$ this yields a contradiction.

Now assume that $o(u) = 17\cdot 307$. Then by considering the character values we obtain
\begin{align*}
D_{4912}(u^{17}) \sim \left(\eigbox{16}{1}, \eigbox{16}{\zeta, ..., \zeta^{306}} \right), \\
D_{4912}(u^{307}) \sim \left(\eigbox{288}{1}, \eigbox{289}{\eta, ..., \eta^{16}} \right)
\end{align*}
for a primitive $307$-th root of unity $\zeta$ and a primitive $17$-th root of unity $\eta$. Now as $\chi_{4912}(u) \in \ZZ$, the $288$ eigenvalues $1$ of $D_{4912}(u^{307})$ have to either all be multiplied with $1$'s or a whole block of $307$-ths roots of unity. But as $288 < 306$ and the multiplicity of the eigenvalue $1$ in $D_{4912}(u^{17})$ is $16$, this is impossible.
\end{proof}

\begin{remark}\label{NegativPSL317} We will show that for $G = \PSL(3, 17)$ or $G = \Aut(\PSL(3,17))$ the constraints imposed by the HeLP method allow exactly 126 possible partial augmentations for elements of order $3 \cdot 17 = 51$.

So let $u \in \V(\ZZ G)$ and assume that $o(u) = 51$. Then using the HeLP package in the same way as in the proof of \cref{PSL2243} 
with the characters $\chi_{306}$, $\chi_{4912}$ and $\chi_{9216}$ from \cref{chartab_L_3_17_2} one obtains that there are exactly 126 tuples of partial augmentations admissible according to the HeLP constraints for these three characters. 

We need to show that this is the maximal possible information available using also the modular characters. Note that we do not have to consider $17$-modular characters. The decomposition matrices of $\PSL(3,17)$ are, in principle, given in \cite{James}, but to get them explicitly requires some computational effort, so we give other arguments. 

Let $G = \Aut(\PSL(3,17))$ and $H = \PSL(3,17)$. The Sylow $3$- and Sylow $307$-subgroups of $G$ and $H$ are cyclic. Thus we can investigate the $3$-modular and $307$-modular blocks of $G$ and $H$ using the theory of Brauer Tree Algebras, cf. \cite[§ 62]{CR2}. From the ordinary character table it can be determined which ordinary characters lie in the same block for a given prime \cite[§ 56]{CR2}. By the number of $3$-regular conjugacy classes in $G$ and $H$ and the degrees of ordinary characters lying in the same block, we conclude that any $3$-block contains at most two irreducible modular characters, i.e. a corresponding Brauer Tree has at most $2$ edges. Thus any $3$-Brauer character is liftable.

All $307$-blocks apart from the main block contain exactly one irreducible ordinary character. By the number of $307$-regular conjugacy classes of $G$ and $H$ we conclude that the main block of $G$ contains six, while the main block for $H$ consist of three irreducible modular characters. The irreducible $306$-dimensional ordinary representation of $H$ is a deleted permutation representation coming from the natural $2$-transitive permutation action on the projective plane over $\mathbb{F}_{17}.$ Thus by \cite[V, Beispiel 20.3]{HuppertI} the reduction of a corresponding lattice has a trivial constituent. This implies that the vertices corresponding to the trivial character $\chi_1$ and the character $\chi_{306}$ of degree $306$ are connected in the Brauer Tree of the main block of $H$. Since the trivial character always sits at an end of the tree, the tree has the form 
\[
\begin{tikzpicture}
\node[label=north:{$\chi_1$}] at (0,1.5) (1){};
\node[label=north:{$\chi_{306}$}] at (1.5,1.5) (2){};
\node[label=north:{$\chi_{4913}$}] at (3,1.5) (3){};
\node[label=north:{$\chi_{4608}$}] at (4.5,1.5) (4){};
\foreach \p in {1,2,3,4}{
\draw[fill=white] (\p) circle (.075cm);
}
\draw[fill=black] (4) circle (.075cm);
\draw (.075,1.5)--(1.425,1.5);
\draw (1.575,1.5)--(2.925,1.5);
\draw (3.075,1.5)--(4.425,1.5);
\end{tikzpicture}.
\]
By restricting the characters of $G$ to characters of $H$ we obtain that the Brauer Tree of the main block of $G$ looks as follows: 
\[
\begin{tikzpicture}
\node[label=north:{$\chi_1$}] at (0,1.5) (1){};
\node[label=north:{$\chi_{306}$}] at (1.5,1.5) (2){};
\node[label=north:{$\chi_{4913}$}] at (3,1.5) (3){};
\node[label=north:{$\chi_{9216}$}] at (4.5,1.5) (4){};
\node[label=north:{$\chi_{4913'}$}] at (6,1.5) (5){};
\node[label=north:{$\chi_{306'}$}] at (7.5,1.5) (6){};
\node[label=north:{$\chi_{1'}$}] at (9,1.5) (7){};
\foreach \p in {1,2,3,4,5,6,7}{
\draw[fill=white] (\p) circle (.075cm);
}
\draw[fill=black] (4) circle (.075cm);
\draw (.075,1.5)--(1.425,1.5);
\draw (1.575,1.5)--(2.925,1.5);
\draw (3.075,1.5)--(4.425,1.5);
\draw (4.575,1.5)--(5.925,1.5);
\draw (6.075,1.5)--(7.425,1.5);
\draw (7.575,1.5)--(8.925,1.5);
\end{tikzpicture}.
\]
We conclude that $H$ possesses one irreducible $307$-modular Brauer character that can not be lifted, while $G$ possesses four. These four however fall into pairs coinciding on the conjugacy classes of elements of order $3$ and $17$. All of these characters however can not exclude any partial augmentation computed before.\

All $2$-modular characters of $H$ are liftable, but the arguments here are more elaborated. The main block is the only block possessing more than one irreducible Brauer character, namely three. Just from comparing character values we get the following decomposition matrix $D$, where repeating lines are omitted.

\[D =\begin{pmatrix}
1 & 0 & 0  \\
a_1 & a_2 & a_3 \\
1 + a_1 & a_2 & a_3 \\
b_1 & b_2 & b_3 \\
1 + b_1 & b_2 & b_3 \\
1 + a_1 + b_1 & a_2 + b_2 & a_3 + b_3 \\
2 + 2a_1 + b_1 & 2a_2 + b_2 & 2a_3 + b_3 \\
\end{pmatrix}\]

The second line of $D$ corresponds to the canceled natural permutation module on the $307$ points of the projective plane over $\mathbb{F}_{17}$. This action is $2$-transitive and a short computations implies that this is a simple $\mathbb{F}_2H$-module, a $306$-dimensional invariant space is spanned by the vectors containing an odd number of $0$'s. Thus $a_1 = a_3 = 0$ and $a_2 = 1$. The fifth line of $D$ corresponds to the Steinberg character of $H$ and from \cite[Theorem A]{HissSteinberg}, we get $b_1 = 0$. Since the decomposition matrix of $H$ can be brought to be in unitriangular form by \cite[Theorem 3.8]{Dipper} we obtain $b_3 = 1$. By dimensions of degrees we obtain then $b_2 \leq 16$. Since the determinant of the Cartan matrix $D\cdot D^t$ is a power of $2$ \cite[Theorem 18.25]{CR1} this implies $b_2 = 0$ and we are done. 

For $G$ and for the prime $2$ also only the main block contains more then one irreducible Brauer character. From the dimensions of the representations in the main block it follows that apart from a $4912$-dimensional irreducible Brauer character corresponding to the $4912$-dimensional character in the main block of $H$, any irreducible Brauer character is liftable. This character does not provide new information on the possible partial augmentations of units of order $51$.
\end{remark}

\begin{lemma}\label{S47} Let $G$ be $\PSp(4,7)$ or $\Aut(\PSp(4,7))$ and let $p, q \in \pi(G)$ different primes. Then there are normalized units of order $p\cdot q$ in $\V(\ZZ G)$ if and only if there exist elements of this order in $G$ except possibly for $p\cdot q = 5\cdot 7$. \end{lemma}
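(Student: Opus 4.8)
The plan is to follow the pattern of the preceding lemmata: pin down the prime graph of $G$ exactly, reduce the statement to finitely many products $pq$, and then run the HeLP machinery on the genuinely open case. The primes dividing $|\PSp(4,7)|$ are $2,3,5,7$, so I first determine the element orders from the torus structure of $\operatorname{Sp}(4,7)$. Since $7$ has multiplicative order $4$ modulo $5$, the primitive eigenvalues of an element of order $5$ generate $\FF_{7^4}$ and therefore exhaust the whole $4$-dimensional space, so the centralizer of such an element is the cyclic torus of order $q^2+1=50$. Consequently an element of order $5$ commutes neither with an element of order $3$ (which is $\FF_7$- or $\FF_{49}$-isotropic, living in a torus of order $q-1=6$ or $q^2-1=48$) nor with a unipotent element of order $7$; hence $G$ has no element of order $15$ and none of order $35$. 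All the remaining products do occur: orders $6$ and $10$ arise inside the tori of order $36$ and $50$, while the involution centralizer $\operatorname{Sp}(2,7)\times\operatorname{Sp}(2,7)$ and the centralizer $\operatorname{Sp}(2,7)\times T$ of a suitable element of order $3$ both contain unipotent elements of order $7$, producing elements of orders $14$ and $21$. For these four products both sides of the claimed equivalence hold trivially, so the lemma reduces to ruling out normalized units of order $15$, the product $35$ being explicitly exempted.

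To exclude units of order $15$ I would feed the ordinary character table of $\PSp(4,7)$, together with \cref{Wagner} and the HeLP inequalities \eqref{HeLP-restrictions}, into the package, exploiting that $15$ is coprime to $2$ and $7$ so that $2$- and $7$-modular Brauer characters are also admissible. A unit $u$ of order $15$ satisfies $\varepsilon_1(u)=0$ and, since there is no class of order $15$, is supported only on the classes of elements of orders $3$ and $5$; the admissible partial augmentations of its powers $u^3$ (of order $5$) and $u^5$ (of order $3$) are determined in the prior prime-order HeLP steps, so the unknowns are the partial augmentations of $u$ on the $3$- and $5$-classes, constrained by $\tilde{\varepsilon}_3(u)+\tilde{\varepsilon}_5(u)=1$ and the congruences of \cref{Wagner}. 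Should the ordinary and $7$-modular Brauer characters already force this system to be infeasible, the next difficulty never intervenes; otherwise the $2$-modular characters must be brought in.

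The main obstacle is exactly the point flagged in the introduction: the $2$-modular Brauer table of $\PSp(4,7)$ is known only up to an undetermined entry (a parameter $t$) in the $2$-decomposition matrix. I would first locate where $t$ enters the data relevant to order-$15$ units, namely in the degree and in the values on the $2$-singular classes of a single family of irreducible $2$-Brauer characters. The HeLP constraints for a unit of order $15$ involve character values only on the classes of elements of orders $1,3,5$; if the parameter-carrying characters take determined (parameter-free) values on the odd-order classes, then the inequality system for order $15$ depends on $t$ at most through the total eigenvalue multiplicity, i.e. the degree. In that case I would treat $t$ as a formal variable, bound it by the degrees of the relevant ordinary constituents, and verify with the package that the polytope of admissible partial augmentations is empty for every admissible value of $t$. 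This establishes that no normalized unit of order $15$ exists, independently of the parameter; this parameter-insensitivity is the crux of the argument.

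Finally I would transfer the conclusion to $\Aut(\PSp(4,7))=\PSp(4,7).2$. As $15$ and $35$ are odd, every unit of such order, together with all of its relevant powers, is supported on classes contained in the socle, so the analysis carries over using the characters of $\PSp(4,7).2$ (equivalently, characters induced from the socle), and the same parameter-insensitivity argument rules out order $15$ there as well. The product $35=5\cdot 7$ is then left as the single exceptional case, where I expect the HeLP polytope to be non-empty — again for every value of the parameter — to be settled by the lattice method in the sequel.
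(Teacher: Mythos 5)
Your reduction step contains a group-theoretic error that leaves a genuine hole in the argument. You claim that elements of order $10$ exist in $\PSp(4,7)$ "inside the torus of order $50$". That torus is the cyclic maximal torus $C_{q^2+1}=C_{50}$ of $\operatorname{Sp}(4,7)$, but its unique involution is $-I$, i.e.\ it contains the center, so its image in the projective group is only $C_{25}$. In fact $\PSp(4,7)$ has elements of order $25$ and $5$ but none of order $10$: its prime graph has exactly the edges joining $2$ with $3$, $2$ with $7$, and $3$ with $7$, and the edge between $2$ and $5$ appears only for $\Aut(\PSp(4,7))$. Consequently the lemma requires, besides order $15$, ruling out normalized units of order $10$ in $\V(\ZZ \PSp(4,7))$ — a case your proposal never addresses. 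This is precisely one of the two computations in the paper's proof: units of order $10$ in $\V(\ZZ\PSp(4,7))$ are excluded by HeLP using an ordinary character of degree $175$ (determined by its values on the two classes of involutions) together with the $7$-Brauer character of degree $5$ coming from $\PSp(4,7)\simeq\Omega(5,7)$.

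Two further points. First, your handling of order $15$ runs in the wrong direction: excluding units of order $15$ in $\V(\ZZ\PSp(4,7))$ does not formally yield the statement for $\Aut(\PSp(4,7))$, since $\V(\ZZ\PSp(4,7))\subseteq\V(\ZZ\Aut(\PSp(4,7)))$ and a unit of odd order in the larger ring need not lie in the smaller one — vanishing of partial augmentations on classes outside the socle says nothing about the support of $u$ as a $\ZZ$-linear combination of group elements. What you describe ("the analysis carries over using the characters of $\PSp(4,7).2$") is really a second, independent HeLP computation, not a transfer. The paper works the other way around: it excludes order $15$ directly for $\Aut(\PSp(4,7))$, using the $7$-Brauer character $\varphi'$ and a single ordinary character of degree $50$, and the socle case then follows for free from the inclusion of group rings. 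Second, the parametrized $2$-modular Brauer table, which you make the crux of your argument, is never needed for this lemma: the paper's proof of \cref{S47} uses only ordinary and $7$-modular characters. The parameter $x$ in White's $2$-modular table enters only in \cref{NegativS47}, i.e.\ in showing that HeLP \emph{cannot} exclude units of order $35$ — exactly the case the lemma exempts.
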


\begin{proof} For both groups the ordinary character tables and one character in characteristic $7$ are available in \cite{AtlasRep}. We will exclude the existence of normalized units of order $2\cdot 5$ in $\ZZ \PSp(4,7)$ and $3\cdot 5$ in $\ZZ \Aut(\PSp(4,7))$.

First let $G = \PSp(4,7)$ and assume that $u \in \V(\ZZ G)$ is of order $10$. Note that $G$ contains two conjugacy classes of elements of order $2$, which can be distinguished by the sizes of their centralizers, and one conjugacy class of elements of order $5$. To disprove the existence of $u$ we use an ordinary character $\chi$ of degree $175$, which is uniquely determined by its values on involutions, together with a $7$-Brauer character $\varphi$ coming from the isomorphism $\PSp(4, 7)\simeq \Omega(5, 7)$. The values of these characters are given in \cref{chartab_PSp_4_7}.

{\footnotesize
\begin{verbatim}
gap> C := CharacterTable("S4(7)");;
gap> oC := OrdersClassRepresentatives(C);;
gap> G := AtlasGroup("S4(7)", Characteristic, 7, Dimension, 5);;
gap> CC := ConjugacyClasses(G);;
gap> oG := List(CC, C -> Order(Representative(C)));;
gap> phi := ListWithIdenticalEntries(Size(Irr(C)), 0);;
gap> phi[1] := BrauerCharacterValue(Representative(CC[Position(oC, 1)]));;
gap> phi[Position(oC, 5)] := BrauerCharacterValue(Representative(CC[Position(oG, 5)]));;
gap> for j in Positions(oG, 2) do for k in Positions(oC, 2) do
>   if SizesCentralizers(C)[k] = Order(Centralizer(G, Representative(CC[j]))) then
>   phi[k] := BrauerCharacterValue(Representative(CC[j]));
> fi; od; od;
gap> phi := ClassFunction(C, phi);;

gap> HeLP_WithGivenOrder([phi], 2);;
#I  Number of solutions for elements of order 2: 3; stored in HeLP_sol[2].
gap> chi := First(Irr(C), x -> x{Positions(oC,2)} = [31, 7]);;
gap> HeLP_WithGivenOrder([ chi, phi ], 2*5);;
#I  Number of solutions for elements of order 10: 0; stored in HeLP_sol[10].
\end{verbatim}
}

\begin{table}[ht] \caption{Parts of some characters of $G \in \{\PSp(4, 7), \Aut(\PSp(4, 7))\}$.}\label{chartab_PSp_4_7}
\subfloat[$G = \PSp(4, 7)$]{
\begin{tabular}{ccccc}\toprule & \cc{1a} & \cc{2a} & \cc{2b} & \cc{5a} \\ \midrule
$\chi = \chi_{8/175b}$ & $175$ & $31$ & $7$ & $0$  \\ 
$\varphi$ & $5$ & $-3$ & $1$ & $0$  \\ 
\bottomrule
\end{tabular}}
\qquad
\subfloat[$G = \Aut(\PSp(4, 7))$]{
\begin{tabular}{ccccc}\toprule & \cc{1a} & \cc{3a} & \cc{3b} & \cc{5a} \\ \midrule
$\chi' = \chi_{3/50}$ & $50$ & $2$ & $8$ & $0$  \\ 
$\varphi'$ & $5$ & $2$ & $-1$ & $0$  \\ 
\bottomrule
\end{tabular}}
\end{table}

Now assume that $G = \Aut(\PSp(4,7))$ and that $u \in \V(\ZZ G)$ is of order $15$. The character table of $G$ is available in \textsf{GAP} via the command \texttt{CharacterTable("S4(7).2");}.  With a similar code as above one can exclude the existence of units of order $15$ in $\ZZ G$ using the $7$-Brauer character $\varphi'$ for elements of order $3$ and the characters $\varphi'$ and $\chi'$ for elements of order $15$. The relevant values are given in \cref{chartab_PSp_4_7}. Note that the conjugacy classes of elements of order $3$ can be distinguished by the structures of their centralizers, in particular by the number of involutions contained therein, or by power maps: the class $\cc{3b}$ contains all $16$th powers of elements of order $48$.
\end{proof}

\begin{remark}\label{NegativS47}
Let $G = \PSp(4,7)$ or $G = \Aut(\PSp(4,7))$. The HeLP method is not sufficient to decide if there exist units of order $5 \cdot 7 = 35$ in $\V(\ZZ G)$.

First let $G = \PSp(4,7)$. Since there is only one conjugacy class of elements of order $5$ in $G$, we do not need to consider $7$-modular Brauer characters. The irreducible $3$-modular and $5$-modular Brauer characters of $G$ are known. Every irreducible $3$-modular Brauer character is liftable by \cite[Theorem 4.1]{Whiteq1} and there are excatly two non-liftable irreducible $5$-modular Brauer characters by \cite[Theorem 3.9]{WhiteBrauer}. 
These can be obtained in \textsf{GAP} via the code given below and are called \texttt{eta1} and \texttt{eta2} there.
The irreducible $2$-modular Brauer characters are not completly known, to our knowledge. They are determined in \cite[Theorem 3.1]{White2} up to an parameter $x$ which for our $G$ can take values $1$, $2$ or $3$. The constraints for $x = 1$ and $x = 2$ are weaker then for $x = 3$ and it thus will suffice to consider the case $x = 3$, since we show the existence of non-trivial solutions. Then there are five non-liaftable irreducible $2$-modular Brauer characters of $G$ called $\varphi_1$ to $\varphi_5$ in \cite[Theorem 3.1]{White2}. Note that $\varphi_4$ and $\varphi_5$ are liftable as characters of the corresponding symplectic group, but not as characters of the projective symplectic group. The HeLP restrictions provided by these characters leave exactly $9$ possible non-trivial partial augmentations for elements of order $35$ in $\V(\ZZ G)$.
Thus the following code demonstrates that there are $9$ possible non-trivial partial augmentations satisfying the HeLP constraints for elements of order $35$ in $\V(\ZZ G)$.

{\footnotesize
\begin{verbatim}
gap> C := CharacterTable("S4(7)");;
gap> eta1 := Irr(C)[9] - Irr(C)[1];;
gap> eta2 := Irr(C)[37] - Irr(C)[4];; # 5-modular Brauer characters

gap> phi4 := Irr(C)[2] - Irr(C)[1];;
gap> phi5 := Irr(C)[3] - Irr(C)[1];;
gap> phi6 := Irr(C)[4];;
gap> phi3 := Irr(C)[7] - Irr(C)[1];;
gap> x := 3;;
gap> phi1 := Irr(C)[18] - (x-1)*phi6;;
gap> phi2 := Irr(C)[19] - (x-1)*phi6;; # 2-modular Brauer characters

gap> L5 := [eta1, eta2];;
gap> L2 := [phi1, phi2, phi3, phi4, phi5];;
gap> HeLP_WithGivenOrder(Concatenation(Irr(C), L2, L5), 7);;
#I  Number of solutions for elements of order 7: 4706; stored in HeLP_sol[7].
gap> HeLP_WithGivenOrder(Concatenation(Irr(C), L2), 5*7);;
#I  Number of solutions for elements of order 35: 9; stored in HeLP_sol[35].
\end{verbatim}
}

For $G = \Aut(\PSp(4,7))$ analogues computations demonstrate that there are $4$ possible non-trivial partial augmentations satisfying the HeLP constraints. The irreducible Brauer characters can be derived from the ordinary table and the arguments above. In particular an irreducible $2$-modular Brauer character of $\PSp(4,7)$ induced to $G$ is irreducible if and only if it is rational-valued on the conjugacy classes of elements of order $7$.
\end{remark}

\begin{example}\label{ExplicitExample}
We give an explicit example how the HeLP package can be used to check the information given in \cref{HeLPTable}.

{\footnotesize 
\begin{verbatim}
gap> LoadPackage("help");;
gap> C := CharacterTable("L2(81).(2x4)");;    
gap> HeLP_WithGivenOrder(C, 15);
#I  Number of solutions for elements of order 15: 1; stored in HeLP_sol[15].
[ [ [ 1 ], [ 1 ], [ 6, -5 ] ] ]
gap> HeLP_WithGivenOrder(Irr(C){[13, 33]}, 15);;
#I  Number of solutions for elements of order 15: 1; stored in HeLP_sol[15].
gap> HeLP_WithGivenOrderSConstant(Irr(C){[9]}, 41, 3);;
#I  Number of solutions for elements of order 123: 0; stored in HeLP_sol[123].
gap> HeLP_WithGivenOrderSConstant(Irr(C){[9]}, 41, 5);;
#I  Number of solutions for elements of order 205: 0; stored in HeLP_sol[205].
\end{verbatim}
}

\end{example}

\begin{example}\label{code_induced_characters}
We give an example, how induced characters can be used in the package. This is necessary for some groups. More specifically, we will prove here that for $G = \Aut(\PSU(3,8))$ there are no elements of order $7\cdot 19$ in $\V(\ZZ G)$.

{\footnotesize
\begin{verbatim}
gap> C := CharacterTable("U3(8)");;
gap> G := PSU(3,8);;
gap> A := AutomorphismGroup(G);;
gap> AllCharacterTableNames(Size,Size(A));
[ "3.U3(8).6", "3.U3(8).S3" ]
#This means: The character table of the automorphism group A of PSU(3,8) is not available in GAP. 
gap> NN := Filtered(NormalSubgroups(A), N -> Order(N) = Order(G));
[ <group of size 5515776 with 2 generators> ]
gap> H := NN[1];;      #Subgroup of A isomorphic to G              
gap> CharacterTableWithStoredGroup(H,C);;
gap> D := CharacterTable(H);; 
gap> chi := InducedClassFunction(Irr(D)[2],A);;
gap> HeLP_WithGivenOrder([chi],7*19);;
#I  Number of solutions for elements of order 133: 0; stored in HeLP_sol[133].
\end{verbatim}
}

\end{example}

\section{Proof of \cref{MainTheorem}}\label{results}

To prove \cref{MainTheorem} we give in \cref{HeLPTable} below the complete information for all almost simple $4$-primary groups in a compact form such that the results are easy to reproduce\footnote{A file to check the claimed data is available on the website of the first author: \url{http://homepages.vub.ac.be/abachle/\#research}.}. For every simple $4$-primary group $S$, it contains every $4$-primary subgroup of $\Aut(S)$ up to isomorphism. Almost simple groups having the same socle are grouped together and are separated by a single line, while groups having different socles are separated by two lines. It is to read in the following way. \\
\begin{itemize}
\item The first column contains one or more names of the group. If the ordinary character table of the group is available in the \textsf{GAP} character table library, a name of it from that library is given in quotes. 
For instance, the character table of $\Aut(\PSL(2,16))$ can then be obtained in \textsf{GAP} by \texttt{CharacterTable("L2(16).4");}.
\item The second column contains the prime graph of the group.
\item The third column records which orders of torsion units need to be checked to obtain a positive answer to the Prime Graph Question for the given group. The critical order is only given for the biggest group it has to be checked (in case the existence of units of that order can be excluded for that group).  So if $H$ is a subgroup of $G$ such that $G$ and $H$ both contain elements of order $p$ and $q$, but not of order $pq$, the entry $pq$ will only appear for $G$. E.g. $A_7$ and $S_7$ both contain involutions and elements of order $7$, but not of order $2\cdot 7$. So $2\cdot 7$ will appear for $S_7$, but not for $A_7$. While $2\cdot 5$ appears for $A_7$, but not for $S_7$, since $S_7$ contains elements of order 10.
The column sometimes also contains primes. A prime $p$ appears, if the partial augmentations of units of order $p$ are needed to  obtain the maximal restrictions on units of order $pq$. E.g.\ to obtain that there are no units of order $3\cdot 5$ for $\PSL(2,81).4a$ one needs to know the partial augmentations of units of order 3. In two cases a prime $p$ will not appear in the column, although it appears as a factor of $p\cdot q$. This will happen if either there is only one conjugacy class of elements of order $p$ in the group and thus the partial augmentations are clear, or if the characters used to obtain the information for units of order $p \cdot q$ are constant on all conjugacy classes of elements of order $p$. In the second case the partial augmentations of units of order $p$ do not influence the  calculations for elements of order $p \cdot q$ and this is also marked in the fourth column.\\
For the possibly infinite series appearing in the table, some of the groups in these series contain elements of order $p\cdot q$, others do not. For that reason these orders appear in parentheses and the edge is dotted in the prime graph.
\item The fourth column contains how one can obtain the information for units of the order given in the third column. If the order is composite this information is sufficient to obtain the maximal available information using the HeLP method. The information can be of the following types:
\begin{itemize}
\item If the group has been handled before in the literature a reference is given. If only some specific order has been handled a reference is given for this order.
\item Several cases are covered by general results in this article and in this case an internal reference is given.
\item If explicit computations have to be performed a list of characters is given. For composite orders these characters allow us to obtain the maximal possible information available via the HeLP restrictions as explained in \cref{Extended_HeLP_restrictions}. Mostly the characters are taken from the \textsf{GAP} character table library. $\chi_{n/m\alpha}$ denotes the $n$-th ordinary in the character table. This is the $\alpha$-th character of degree $m$ in that table. If the table contains only one character of degree $m$ the index $\alpha$ is omitted. E.g. the restriction obtained for units of order $2\cdot 3$ for $\PSL(2,16)$ use two ordinary characters $\chi_{2/11a}$ and $\chi_{11/17a}$ from \texttt{CharacterTable("L2(16)");}. While for units of order $2\cdot 5$ only the character $\chi_{12/17b}$ is used, which is the second character of degree 17 in the table. 
See \cref{ExplicitExample} how to use this information in the implementation of the HeLP method.
\\
Characters of the form $\varphi^r_{n/m\alpha}$ refer to Brauer characters modulo $r$ available in the \textsf{GAP} character table library. The lower indices are to be read in the same way as for ordinary characters. E.g. the character $\varphi^3_{3/55a}$ used for $\PSL(3,7).2$ refers to the third character of \texttt{CharacterTable("L3(7).2") mod 3;}.\\
In some situations where the characters available in the \textsf{GAP} character table library are not sufficient to obtain the maximal possible information induced characters are used. This happens for 
two groups containing $\PSU(3,8)$. 
See \cref{code_induced_characters}, how to do this.
\end{itemize}  
\item The fifth column contains the number of possible non-trivial partial augmentations obtained for the order in the third column. For composite orders this is the minimal possible number obtainable using the HeLP method. In case they are non-zero, they are set in bold and they are exactly the critical cases remaining to answer the Prime Graph Question. For units of prime order the information obtainable with the characters in the \textsf{GAP} character table library may sometimes not be sufficient to obtain the maximal possible restrictions, since some Brauer tables are not available in the library. In this cases the given number are marked with a star. In these situations we are however able to prove the Prime Graph Question and for this reason we do not aim at obtaining the best possible information.
\end{itemize}

{\footnotesize
 \begin{center}


\end{center}
}

\begin{remark} Note that there are more almost simple groups with 4-primary socle, but these are not 4-primary themselves and thus not listed in \cref{HeLPTable}. Namely this is the automorphism group of $\Sz(8)$, having 3 as an additional prime divisor, the automorphism groups of $\PSL(2,2^f)$ and $\PSL(2,3^f)$ and a degree $f$-extension of $\PSL(2,3^f)$, where the group either appears in the series $\PSL(2,3^f)$ described in \cref{4primaryProposition} or $3^f=3^5$. Note that in the case that $\PSL(2,3^f)$ is $4$-primary, $f$ has to be a prime if $f \geq 5$. This follows easily from \cref{4primaryProposition} and \cite[3.3.4]{Wilson}.
\end{remark}

\section*{Acknowledgements}
We are grateful to Christof Söger and Sebastian Gutsche for helping us to solve linear inequalities on computers. We thank Thomas Breuer for help with the \textsf{GAP} Character Table Library. We also want to thank the referee for improving the presentation in the article.

We moreover thank Sara Cebellan Debon for her careful reading and finding mathematical gaps in an earlier version of the paper.

\bibliographystyle{amsalpha}
\bibliography{HeLPTeil}

\end{document}